\newcommand{\set}[1]{\left\{#1\right\}}
\newcommand{\fromto}[2]{#1, \dotsc, #2}
\newcommand{\setfromto}[2]{\set{\fromto{#1}{#2}}}
\def\PP{{\mathbb P}}
\def\kk{{\Bbbk}}
\newcommand{\GL}{\mathbf{GL}}
\newcommand{\Gl}{\GL}
\newcommand{\SL}{\mathbf{SL}}
\DeclareMathOperator{\codim}{codim}
\DeclareMathOperator{\End}{End}
\DeclareMathOperator{\Gr}{Gr}
\DeclareMathOperator{\Id}{Id}
\DeclareMathOperator{\Discr}{Discr}
\newcommand{\hook}{\ensuremath{\mathbin{\text{\raisebox{.4ex}{%
  \vrule height .5pt width 1ex depth 0pt%
  \vrule height 0.8ex width .5pt depth 0pt%
}}\mathchoice{}{}{\mkern3mu}{\mkern3mu}}}}%
\title{On the locus of points of high rank}
\author[J. Buczy{\'n}ski]{Jaros{\l}aw Buczy{\'n}ski}
\address{Jaros\l{}aw Buczy\'nski,
Institute of Mathematics of the Polish Academy of Sciences,
  ul.\ \'Sniadeckich 8,
  00-656 Warszawa, Poland,
and
Faculty of Mathematics, Computer Science and Mechanics,
University of Warsaw,
ul.\ Banacha 2,
02-097 Warszawa,
Poland
 }
 \email{jabu@mimuw.edu.pl}
\author[K. Han]{Kangjin Han}
\address{Kangjin Han,
School of Undergraduate Studies,
Daegu-Gyeongbuk Institute of Science \& Technology (DGIST),
333 Techno jungang-daero, Hyeonpung-myeon, Dalseong-gun
Daegu 42988,
Republic of Korea}
\email{kjhan@dgist.ac.kr}
\author{Massimiliano Mella}
\address{Massimiliano Mella,
Dipartimento di Matematica e Informatica,
Universit\`a di Ferrara,
Via Machiavelli 35,
44100 Ferrara Italia}
\email{mll@unife.it}
\author[Z. Teitler]{Zach Teitler}
\address{Zach Teitler, Department of Mathematics, Boise State University, 1910 University Drive, Boise, ID 83725-1555, USA}
\email{zteitler@boisestate.edu}
\date{March 8, 2017}
\keywords{Secant variety, rank locus, tensor rank, symmetric tensor rank}
\subjclass[2010]{14N15, 15A72}
\newcommand{\bbP}{\mathbb{P}}
\newcommand{\bbk}{\Bbbk}
\newcommand{\transpose}{\top}
\DeclareMathOperator{\rank}{rank}
\DeclareMathOperator{\Vertex}{Vertex}
\DeclareMathOperator{\pr}{pr} % projection map
\DeclareMathOperator{\Seg}{Seg} % Segre map
\newtheorem{theorem}{Theorem}
\newtheorem{lemma}[theorem]{Lemma}
\newtheorem{proposition}[theorem]{Proposition}
\newtheorem{corollary}[theorem]{Corollary}
\theoremstyle{definition}
\newtheorem{definition}[theorem]{Definition}
\theoremstyle{remark}
\newtheorem{remark}[theorem]{Remark}
\newtheorem{example}[theorem]{Example}
\begin{document}

\begin{abstract}
  Given a closed subvariety $X$ in a projective space, the rank with respect to $X$ of a point $p$ in this projective space 
    is the least integer $r$  such that $p$ lies in the linear span of some $r$ points of $X$.
  Let $W_k$ be the closure of the set of points of rank with respect to $X$ equal to $k$.
  For small values of $k$ such loci are called secant varieties. 
  This article studies the loci $W_k$ for values of $k$ larger than the generic rank.
  We show they are nested, we bound their dimensions, and we estimate the maximal possible rank with respect to $X$ in special cases,
    including when $X$ is a homogeneous space or a curve. 
  The theory is illustrated by numerous examples, including Veronese varieties, the Segre product of dimensions $(1,3,3)$, and curves. 
  An intermediate result provides a lower bound on the dimension of any $\GL_n$ orbit of a homogeneous form. 
\end{abstract}

\maketitle

\section{Introduction}

A general $m \times n$ matrix has rank $\min\{m,n\}$, and this is the greatest possible rank.
The locus of matrices of rank at most $r$, for $r \leq \min\{m,n\}$, is well-studied:
its defining equations are well-known, along with its codimension, singularities, and so on.

Also well-studied are the loci of tensors of a fixed format and of rank at most $r$.
These, up to closure, are \emph{secant varieties} of Segre varieties.
Despite intense study, defining equations and dimensions of such secant varieties are known only in limited cases,
to say nothing of their singularities.
For introductory overviews of this, see for example \cite{MR3213518,MR2865915}.
In contrast to the matrix case, however, special tensors may have ranks strictly greater than the rank of a general tensor.
The locus of tensors with ranks greater than the generic rank is quite mysterious.
In general it is not known
what is the dimension of this locus, what are its equations, whether it is irreducible---or even whether it is nonempty.

Similarly, the closure of the locus of symmetric tensors of rank at most $r$ is a secant variety of a Veronese variety.
In this case, the dimensions of all such secant varieties are known, although the equations are not known.
The same sources \cite{MR3213518,MR2865915} also give introductions to this case as well.
But once again, special symmetric tensors may have ranks strictly greater than the rank of a general symmetric tensor.
And once again, the locus of such symmetric tensors is almost completely unknown.

Here we study high rank loci for tensors and symmetric tensors,
and for more general notions of rank.
We consider rank with respect to a nondegenerate, irreducible projective variety $X \subseteq \bbP^N$
  over an algebraically closed field $\kk$ of characteristic zero.
Let $\rank = \rank_X$ denote rank with respect to $X$, the function that assigns to each point $p \in\bbP^N$
the least number $r$ such that $p$ lies in the linear span of some $r$ points of $X$.
See \S\ref{sect: background} for more details.
For $k \geq 1$ let
\[
  W_k = \overline{\rank^{-1}(k)} = \overline{\{p \in \bbP^N : \rank(p) = k\}} .
\]
Let $g$ be the generic rank with respect to $X$.
Note that $W_k = \sigma_k(X)$ is the $k$th secant variety for $1 \leq k \leq g$,
in particular $W_1 = X$ and $W_{g} = \bbP^N$.
We seek to understand the high rank loci, namely, $W_k$ for $k > g$.

We give dimension bounds for the $W_k$ and we find containments and non-containments between the high rank loci and secant varieties.
Using these, we can improve previously known upper bounds for rank in the cases where $X$ is a curve 
  (Theorem~\ref{thm_dimension_bound_for_curve}) 
  or a projective homogeneous variety (Theorem~\ref{thm_dimension_bound_for_homogeneous}).
This includes Segre and Veronese varieties, corresponding to tensor rank and symmetric tensor rank.
The key result is a nesting statement, that each high rank locus $W_k$ for $k > g$ is contained in the next highest one $W_{k-1}$,
and in fact more strongly the join of $W_k$ and $X$ is contained in $W_{k-1}$, see Theorem~\ref{thm_nesting_of_join_and_loci}.

We give a lower bound for the dimension of the locus of symmetric tensors of maximal rank,
showing that, even though the maximal value of rank is unknown~(!),
there is a relatively large supply of symmetric tensors with maximal rank, see Theorem~\ref{thm_dimension_bound_for_Veronese}.
Possibly of independent interest, we give a lower bound for the dimension of the $\GL(V)$ orbit of a homogeneous form
$F \in S^d(V)$, assuming only that $F$ is concise, i.e., cannot be written using fewer variables;
other well-known results assume $F$ defines a smooth hypersurface, but we give a bound even if the hypersurface defined by $F$
is singular, reducible, or non-reduced, see Proposition~\ref{prop_dimension_of_orbit_of_a_concise_form}.

We find the dimension of the locus of $2 \times 2n \times 2n$ tensors of maximal rank, see Proposition~\ref{prop: dim of tensor wm},
and we characterize all $2 \times 4 \times 4$ tensors with greater than generic rank, see Proposition~\ref{prop_2x4x4_tensors}.

Finally, in Section~\ref{sect_curves} we let $X$ be a curve contained in a smooth quadric in $\PP^3$.
Then the generic rank is $g=2$ and the maximal rank is $m=2$ or $3$.
When $X$ is a general curve of bidegree $(2,2)$ we show that $W_3$ is a curve of degree $8$, disjoint from $X$,
with $4$ points of rank $2$ and all the rest of rank $3$.
Piene has shown that if $X$ is a general curve of bidegree $(3,3)$, then $W_3$ is empty, i.e., $m=2$.
We extend this to general curves of bidegree $(a,b)$ with $a \geq 4$ and $b \geq 1$.

\section{Background}\label{sect: background}

We work over a closed field $\bbk$ of characteristic zero.

For a finite dimensional vector space $V$,
let $\bbP V$ be the projective space of lines through the origin in $V$,
and for $q \in V$, $q \neq 0$, let $[q]$ be the corresponding point in $\bbP V$.
A \emph{variety} $X \subseteq \bbP V$ is a reduced closed subscheme.
We deal only with varieties in $\bbP V$ defined over $\bbk$.
Recall that a variety $X \subseteq \bbP V$ is \emph{nondegenerate} if $X$ is not contained in any proper linear subspace,
equivalently if $X$ linearly spans $\bbP V$.

\subsection{Ranks and secant varieties}

Let $X \subseteq \bbP V$ be a nondegenerate variety.
For $q \in V$, $q \neq 0$, the \emph{rank with respect to $X$} of $q$, denoted $\rank_X(q)$ or more simply $\rank(q)$,
is the least integer $r$ such that $q = x_1 + \dotsb + x_r$
for some $x_i \in V$ with $[x_i] \in X$ for $1 \leq i \leq r$.
Equivalently, $\rank(q)$ is the least integer $r$ such that $[q]$ lies in the span of
some $r$ distinct, reduced points in $X$.
We extend $\rank$ to $\bbP V$ by $\rank([q]) = \rank(q)$.

For example, \emph{tensor rank} is rank with respect to a Segre variety,
\emph{Waring rank} is rank with respect to a Veronese variety,
and \emph{alternating tensor rank} is rank with respect to a Grassmannian in its Pl\"ucker embedding.

The rank function is subadditive and invariant under multiplication by scalars.
In particular,
\[
  \rank(p)-\rank(q) \leq \rank(p+q) \leq \rank(p)+\rank(q) .
\]

The $r$th \emph{secant variety} of $X$, denoted $\sigma_r(X)$, is the closure of the union
of the planes spanned by $r$ distinct, reduced points in $X$.
Equivalently, $\sigma_r(X)$ is the closure of the set of points of rank at most $r$.
%See \cite{MR3213518} for an introduction, including an extensive bibliography.

There is a unique value $g$, called the \emph{generic rank},
such that there is a Zariski open, dense subset of $\bbP^N$ of points with rank $g$.
The generic rank is the least value $r$ such that $\sigma_r(X) = \bbP^N$.
(The situation is more complicated over non-closed fields.
See for example \cite{MR3506500} for the real case.)

\subsection{Upper bounds for rank}

As long as $X$ is nondegenerate, we may choose a basis for $V$ consisting of points $x_i$ with $[x_i] \in X$,
and then every point in $\bbP V$ can be written as a linear combination of those basis elements.
This shows that every point in $\bbP V$ has rank at most $\dim V$.
In particular the values of rank are finite and bounded.

Let $m$ be the maximal rank with respect to an irreducible, nondegenerate variety $X$.
Recall the following well-known upper bounds.
\begin{theorem}[\cite{MR2628829}]\label{upper bound codim+1}
$m \leq \codim(X)+1$.
\end{theorem}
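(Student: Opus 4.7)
The plan is to prove $m \leq \codim(X)+1$ by establishing the pointwise bound $\rank_X(p) \leq \codim(X)+1$ for every $p \in \bbP V$, via a general linear section argument. Set $N = \dim \bbP V$ and $c = \codim(X) = N - \dim X$, and fix $p \in \bbP V$; the goal is to exhibit at most $c+1$ points of $X$ whose linear span contains $p$.

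My preferred route is induction on $\dim X$. The base case $\dim X = 0$ is immediate: a nondegenerate finite subset of $\bbP V$ must contain a linear basis of $V$, so every point of $\bbP V$ is a linear combination of $\dim V = N+1 = c+1$ points of $X$. For the inductive step, I would choose a general hyperplane $H \subseteq \bbP V$ through $p$. Since $X$ is nondegenerate, $X \not\subseteq H$, so $Y := X \cap H$ is a subvariety of $H$ of dimension $\dim X - 1$ and of codimension $c$ in $H$. A classical general-position fact that I would invoke rather than reprove is that a general hyperplane section of an irreducible nondegenerate variety is itself nondegenerate. Applying the inductive hypothesis to $Y \subseteq H$ at $p \in H$ then yields $\rank_Y(p) \leq c+1$, and since $Y \subseteq X$ we conclude $\rank_X(p) \leq \rank_Y(p) \leq c+1$.

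The main obstacle is the nondegeneracy of $Y = X \cap H$. For $\dim X \geq 2$, Bertini makes $Y$ irreducible, after which an inclusion $Y \subseteq H' \subset H$ into a hyperplane $H'$ of $H$ would let me move $H$ in the pencil of hyperplanes of $\bbP V$ containing $H \cap H'$ and force $X$ itself into a proper linear subspace of $\bbP V$, contradicting nondegeneracy. For the borderline case $\dim X = 1$, $Y$ is a finite set, and the analogous statement is the uniform position principle for irreducible nondegenerate curves: a general hyperplane meets such a curve in points in linearly general position, which in particular span $H$. A direct alternative that avoids induction is to pick a general $c$-dimensional subspace $L \ni p$ and use $\deg(X) \geq c+1$ together with uniform position to extract $c+1$ points of $L \cap X$ spanning $L$; this approach relies on essentially the same classical ingredients.
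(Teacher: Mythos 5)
Your proof is correct, and the ``direct alternative'' you sketch at the end is exactly the paper's argument: cut by a general $(\codim X)$-dimensional plane through $p$, use Bertini and the General Position Theorem (Harris, Proposition~18.10) to see that it meets $X$ in $\deg X$ reduced points spanning the plane, and extract a spanning subset of size $\codim X + 1$. Your primary inductive route is a sound repackaging of the same classical ingredients (Bertini plus general/uniform position for hyperplane sections); the only small quibble is that the effective base case is $\dim X = 1$, handled by uniform position, since an irreducible nondegenerate $0$-dimensional variety in $\PP^N$ forces $N=0$ and makes the stated $\dim X = 0$ base case vacuous.
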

\begin{proof}
For any $q \notin X$, a general plane through $q$ of dimension $\codim X$ is spanned by its intersection with $X$
(see argument in \cite{MR2628829}, or \cite[Proposition 18.10]{harris:intro}),
which is reduced by Bertini's theorem.
This plane intersects $X$ in $\deg(X)$ many points; choosing a spanning subset shows $\rank(q) \leq \codim(X)+1$.
\end{proof}
This was also observed by Geramita when $X$ is a Veronese variety,
corresponding to the case of Waring rank \cite[pg.~60]{Geramita}.
It is false in the positive characteristic case, see \cite{MR2783180},
and it is false over the real numbers, see \cite{MR3506500}, \cite{MR2771116}.
(In the positive characteristic case and over the real numbers the bound is $\codim(X)+2$.)
\begin{theorem}[\cite{MR3368091}]\label{upper bound 2g}
$m \leq 2g$.
If $\sigma_{g-1}(X)$ is a hypersurface, then $m \leq 2g-1$.
\end{theorem}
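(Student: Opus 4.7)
The plan is to apply subadditivity of rank: for any nonzero $q \in V$ and any decomposition $q = p + (q-p)$ one has $\rank_X(q) \leq \rank_X(p) + \rank_X(q-p)$, so it suffices to exhibit a $p$ for which both summands have controlled rank. In both parts the argument reduces to intersecting two dense open subsets of a suitable positive-dimensional irreducible variety.

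For the bound $m \leq 2g$: since $g$ is the generic rank and the set of ranks is finite, the set $A = \{p \in V : \rank_X(p) \leq g\}$ contains a Zariski open dense subset $A^\circ$ of $V$, namely the complement of the finite union of the proper closed sets $W_k$ for $k \neq g$. The affine involution $\phi \colon p \mapsto q - p$ is an isomorphism of $V$, so $\phi^{-1}(A^\circ) = \{p : q - p \in A^\circ\}$ is also open and dense. The intersection $A^\circ \cap \phi^{-1}(A^\circ)$ is a nonempty open subset of $V$, from which we may pick $p$ distinct from $0$ and $q$; for such a $p$, $\rank_X(q) \leq 2g$.

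For the refined bound $m \leq 2g - 1$: we now aim to choose $p$ with $\rank_X(p) \leq g - 1$ and $\rank_X(q - p) \leq g$. The constructible set $S_{g-1} = \{p : \rank_X(p) \leq g - 1\}$ contains, by Chevalley's theorem, a Zariski open dense subset $U$ of its closure $\sigma_{g-1}(X)$, which is irreducible (because $X$ is) and a hypersurface by hypothesis. The set $\{p : \rank_X(q - p) \leq g\}$ contains a dense open of $V$, whose intersection with the positive-dimensional irreducible hypersurface $\sigma_{g-1}(X)$ is dense open, and therefore meets $U$. Any $p$ in this common intersection, chosen away from $0$ and $q$, yields $\rank_X(q) \leq (g-1) + g = 2g - 1$.

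The subtle ingredient, and essentially the only obstacle, is the gap between the conditions $\rank_X(p) \leq k$ and $p \in \sigma_k(X)$: the secant variety is the Zariski closure of the rank-at-most-$k$ locus but may strictly contain it, harboring boundary points of higher rank. Chevalley's theorem bridges this gap by guaranteeing that $S_k$ is constructible and therefore contains a Zariski dense open subset of $\sigma_k(X)$. With this one input in hand, both inequalities follow from the dense-open intersection arguments above.
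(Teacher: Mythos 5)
Your argument for $m \le 2g$ is correct and is in essence the paper's, rephrased affinely: the paper takes a general line through $[q]$ in $\PP V$ spanned by two points of rank $g$, which amounts to the same intersection of two dense open subsets of $V$ that you carry out.

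The argument for $m \le 2g-1$ has a gap at the step ``whose intersection with the positive-dimensional irreducible hypersurface $\sigma_{g-1}(X)$ is dense open.'' A dense open $D \subseteq V$ meets a proper irreducible closed subvariety $H$ in a dense open of $H$ only when $H \not\subseteq V \setminus D$, and here $V \setminus D$ is contained in $q - \{ r : \rank_X(r) > g\}$, a proper closed set whose codimension can be exactly $1$ (for instance, for $2 \times 4 \times 4$ tensors $W_{g+1}$ is a hypersurface), so nothing a priori prevents the affine cone of $\sigma_{g-1}(X)$ from lying inside $V \setminus D$. This is distinct from the Chevalley subtlety you correctly flag: the issue is not whether the rank-$\le g-1$ locus is open in its own closure, but that once $p$ is constrained to the hypersurface $\sigma_{g-1}(X)$, the partner $q-p$ is confined to the fixed hypersurface translate $q - \sigma_{g-1}(X)$, on which the rank-$\le g$ locus has no a priori reason to be dense. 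Your first argument avoids this because it intersects two dense opens of $V$, which is always nonempty, whereas here you intersect a dense open of $V$ with a proper closed subvariety. The paper circumvents the problem by parametrizing over lines through $[q]$ in $\PP V$: for a general such line the intersection points with $\sigma_{g-1}(X)$ sweep out a dense subset of $\sigma_{g-1}(X)$ as the line varies, so one of them has rank at most $g-1$, while a general point of the line has rank $g$; subadditivity then gives $\rank(q) \le 2g-1$. One can repair your version by observing that the affine cone of $\sigma_{g-1}(X)$ is a cone, so that $q - \sigma_{g-1}(X)$ is ruled by lines through $q$ which cover $V$, but unwinding that repair reproduces the paper's line argument.
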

\begin{proof}
A general line through $q \in \bbP^N$ is spanned by two points $x,y$ in the dense open set of points of rank $g$.
So $q$ is a linear combination of $x$ and $y$, and $\rank(q) \leq \rank(x) + \rank(y) = 2g$.
If $\sigma_{g-1}(X)$ is a hypersurface, a general line through $q$ contains a point $x$ of rank $g-1$ and a point $y$ of rank $g$.
Again $\rank(q) \leq \rank(x) + \rank(y) = 2g-1$.
\end{proof}
This bound holds over the real numbers and over closed fields in arbitrary characteristic,
see \cite{MR3368091}.
Over the real numbers this bound is sharp, see \cite[Theorem 2.10]{MR3506500}.
We show that, over a closed field $\bbk$ of characteristic zero,
it can be improved to $m \leq 2g-1$ in some cases, such as when $X$ is a curve or a homogeneous variety.
It is an open question whether $m \leq 2g-1$ for every variety $X$ over a closed field.

\subsection{Joins and vertices}

We recall some basic notions of joins and vertices of varieties in $\bbP^N$.
See \cite[\textsection4.6]{MR1724388} for more details.

\begin{definition}
The \emph{join} of two varieties $V_1, V_2 \subseteq \bbP^N$, denoted $J(V_1,V_2)$,
is the closure of the union of all lines spanned by points $p,q$ with $p \in V_1$, $q \in V_2$, and $p \neq q$.
We also use additive notation:
$V_1 + V_2 = J(V_1,V_2)$
and $kV = V + (k-1)V = V+\dotsb+V$, $k$ times.
In particular the secant variety $\sigma_k(X)$ is equal to $kX$.
\end{definition}

Note that if $X,Y$ are irreducible then so is $X+Y$.

\begin{definition}
Let $W \subset \bbP^N$ be a closed subscheme.
A point $p \in \bbP^N$ is called a \emph{vertex} of $W$ if $p+W = W$ set-theoretically.
The set of vertices of $W$ is denoted $\Vertex(W)$.
\end{definition}
It is well known that $\Vertex(W) \subseteq W$ and $\Vertex(W)$ is a linear space.

\begin{proposition}[{\cite[Proposition 1.3]{MR947474}}]
Let $X, Y$ be irreducible varieties in $\bbP^N$. Then
\begin{enumerate}
\item $X+Y = Y$ if and only if $X \subseteq \Vertex(Y)$,
\item $\dim X+Y = \dim Y + 1$ implies $X \subseteq \Vertex(X+Y)$.
\end{enumerate}
\end{proposition}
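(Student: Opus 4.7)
The plan is to treat the two parts separately, using the definition of vertex together with dimension and irreducibility of the relevant joins.

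For part (1), I would first record the elementary fact that $Y \subseteq X+Y$: for any $q \in Y$, a generic $p \in X$ is distinct from $q$, so $q \in \overline{pq} \subseteq X+Y$ (and symmetrically $X \subseteq X+Y$). The reverse implication is then almost direct: if $X \subseteq \Vertex(Y)$, then $\{p\}+Y = Y$ for every $p \in X$, and the join $X+Y$ is accordingly contained in $Y$; combined with $Y \subseteq X+Y$, this gives equality. The forward implication is symmetric: given $X+Y = Y$, for each $p \in X$ we have $\{p\}+Y \subseteq X+Y = Y$ and $\{p\}+Y \supseteq Y$, hence $\{p\}+Y = Y$, meaning $p \in \Vertex(Y)$.

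For part (2), set $Z = X+Y$. The heart of the argument is to show that under the hypothesis $\dim Z = \dim Y + 1$, a generic $p \in X$ already satisfies $\{p\}+Y = Z$. First, $X \not\subseteq \Vertex(Y)$, for otherwise part (1) would force $X+Y = Y$ and $\dim Z = \dim Y$, contradicting the hypothesis. Hence a generic $p \in X$ lies outside $\Vertex(Y)$, so by part (1) again, $\{p\}+Y \supsetneq Y$, and by irreducibility $\dim(\{p\}+Y) = \dim Y + 1 = \dim Z$. Since $\{p\}+Y$ is an irreducible subvariety of the irreducible $Z$ of the same dimension, $\{p\}+Y = Z$.

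With this in hand I can finish quickly: fix such a generic $p \in X$ and take a general $z \in Z = \{p\}+Y$. Then $z = \lambda p + \mu y$ for some $y \in Y$ and $[\lambda : \mu] \in \bbP^1$, so $z \in \overline{py}$ and the line $\overline{pz}$ coincides with $\overline{py}$, which lies in $\{p\}+Y = Z$. This shows $\{p\}+Z \subseteq Z$ on a dense subset, and hence on all of $Z$ by closedness, so $p \in \Vertex(Z)$. Since $\Vertex(Z)$ is closed (indeed linear) and contains a dense subset of $X$, it contains all of $X$. The main subtlety, and the only place where the dimension hypothesis is actually used, is the equality $\{p\}+Y = Z$ for generic $p$; once this is in place, the closing line-chase is short and transparent.
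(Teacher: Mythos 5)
The paper cites this result from \AA dlandsvik \cite{MR947474} without reproducing a proof, so there is no in-text argument to compare against. On its own terms your argument is correct. Part~(1) is a direct unwinding of the definitions, relying on monotonicity of the join in each argument and the standing inclusion $Y\subseteq\{p\}+Y$. For part~(2) you correctly isolate the crux: for generic $p\in X$, the cone $\{p\}+Y$ is irreducible, properly contains $Y$ (by part~(1), since generically $p\notin\Vertex(Y)$), and sits inside the irreducible $Z=X+Y$ of dimension $\dim Y+1$, hence $\{p\}+Y=Z$; and a point is automatically a vertex of the cone it spans, i.e.\ $\{p\}+\bigl(\{p\}+Y\bigr)=\{p\}+Y$, so $p\in\Vertex(Z)$. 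Your line-chase is this fact written out. One phrasing point to tighten: ``$\{p\}+Z\subseteq Z$ on a dense subset, and hence on all of $Z$ by closedness'' reads oddly because the containment $\{p\}+Z\subseteq Z$ is not a pointwise assertion. What you actually establish is that the lines through $p$ and a general point $z\in Z$ lie in $Z$; since those lines sweep out a dense subset of the irreducible $\{p\}+Z$ and $Z$ is closed, the containment $\{p\}+Z\subseteq Z$ (hence equality) follows. Passing from generic $p$ to all of $X$ via closedness (indeed linearity) of $\Vertex(Z)$ and irreducibility of $X$ is the correct final step.
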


\begin{corollary}\label{dimension bound for chains of joins}
Let $W, X \subseteq \bbP^N$ be irreducible varieties with $X$ nondegenerate.
For every $k \geq 0$, either $\dim (W+kX) \geq \dim W + 2k$ or $W+kX = \bbP^N$.
\end{corollary}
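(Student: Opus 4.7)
The plan is to prove the statement by induction on $k$, with the base case $k=0$ being trivial since $W + 0 \cdot X = W$ and the dimension inequality $\dim W \geq \dim W$ holds.

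For the inductive step, assume the statement holds for $k-1$ and set $Y = W + (k-1)X$. If $Y = \bbP^N$ already, then $W + kX = \bbP^N$ and we are done, so assume $Y \subsetneq \bbP^N$ and $\dim Y \geq \dim W + 2(k-1)$ by the inductive hypothesis. The goal is then to compare $\dim(Y + X)$ with $\dim Y$ and show the gap is at least $2$, unless $Y + X = \bbP^N$.

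The key observation is that $X$ being nondegenerate, combined with the fact that $\Vertex(\cdot)$ is always a \emph{linear} subspace, forces a dichotomy via the cited Proposition (\cite[Proposition 1.3]{MR947474}). First, $Y + X = Y$ would give $X \subseteq \Vertex(Y)$ by part (1); since $X$ is nondegenerate, this linear span would have to be all of $\bbP^N$, forcing $\Vertex(Y) = \bbP^N$ and hence $Y = \bbP^N$, contrary to assumption. So $\dim(Y+X) \geq \dim Y + 1$. Second, if $\dim(Y+X) = \dim Y + 1$, then part (2) gives $X \subseteq \Vertex(Y+X)$, and the same nondegeneracy argument forces $Y + X = \bbP^N$. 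Therefore either $Y + X = \bbP^N$ (in which case $W + kX = \bbP^N$) or $\dim(Y + X) \geq \dim Y + 2 \geq \dim W + 2k$, completing the induction.

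There is no real obstacle here: the entire content is the dichotomy enforced by the Proposition together with the linear-subspace property of vertices and the nondegeneracy of $X$. The only point that requires minor care is the verification that $Y = W + (k-1)X$ is indeed irreducible so that the Proposition applies—this follows from the remark already recorded in the excerpt that the join of two irreducible varieties is irreducible, applied inductively starting from the irreducibility of $W$ and $X$.
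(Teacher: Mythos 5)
Your proof is correct, and since the paper states this as a corollary without an explicit argument, it fills in exactly the inductive dichotomy the authors intend: at each stage the cited Proposition of \AA{}dlandsvik, together with the facts that $\Vertex(\cdot)$ is a linear subspace contained in the variety and that $X$ is nondegenerate, rules out a dimension jump of $0$ or $1$ unless the join already fills $\bbP^N$. You also correctly flag the one point needing care, that each intermediate join $W+(k-1)X$ is irreducible so the Proposition applies, and your handling of it (join of irreducibles is irreducible, inductively) is what the paper's earlier remark supplies.
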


\section{General Results}

\begin{theorem}\label{thm_nesting_of_join_and_loci}
Let $X \subseteq \bbP^N$ be an irreducible, nondegenerate variety.
Let $g$ be the generic rank and $m$ the maximal rank with respect to $X$.
Then for each $k$, $g+1 \leq k \leq m$, $W_k + X \subseteq W_{k-1}$.
In particular $W_m \subset W_{m-1} \subset \dotsb \subset W_{g+1} \subset W_g = \bbP^N$.
\end{theorem}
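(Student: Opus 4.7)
The plan is to proceed by downward induction on $k$, from the base case $k = m$ down to $k = g+1$; the nested chain in the ``in particular'' clause then follows immediately from $W_k \subseteq W_k + X$.

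For the inductive step, I would let $W'$ be an irreducible component of $W_k + X$, so that $W' = W_k^{(i)} + X$ for some irreducible component $W_k^{(i)}$ of $W_k$, and note that a general point of $W_k^{(i)}$ has rank exactly $k$. Let $\rho$ denote the generic rank on $W'$, i.e., the unique value such that $\{q \in W' : \rank(q) = \rho\}$ contains a dense open subset of $W'$. A general $q \in W'$ lies on some line $\langle p, x \rangle$ with $p \in W_k^{(i)}$ of rank $k$ and $x \in X$; writing $q = \lambda p + \mu x$ with $\lambda, \mu \neq 0$ and $p = (q - \mu x)/\lambda$, two applications of subadditivity give $k-1 \leq \rank(q) \leq k+1$, so $\rho \in \{k-1, k, k+1\}$.

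The main step is to rule out $\rho = k$ and $\rho = k+1$. If $\rho = k$, then $W' \subseteq W_k$; since $W'$ is irreducible and contains the component $W_k^{(i)}$, maximality of irreducible components of $W_k$ forces $W' = W_k^{(i)}$, that is, $W_k^{(i)} + X = W_k^{(i)}$. By the Lazarsfeld--Ein proposition quoted above ($X+Y = Y$ iff $X \subseteq \Vertex(Y)$), this gives $X \subseteq \Vertex(W_k^{(i)})$; since $X$ is nondegenerate and the vertex is a linear subspace, $\Vertex(W_k^{(i)}) = \bbP^N$, hence $W_k^{(i)} = \bbP^N$. This contradicts $W_k^{(i)} \subseteq W_k \subsetneq \bbP^N$, the strict inclusion holding because $k > g$ forces the rank-$k$ locus to be non-dense. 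If $\rho = k+1$, the inductive hypothesis yields $W_{k+1} \subseteq W_k$, so $W' \subseteq W_{k+1} \subseteq W_k$, reducing to the previous case; in the base case $k = m$ there are no points of rank $k+1$ at all, so this possibility is excluded automatically. The remaining value $\rho = k-1$ gives $W' \subseteq \overline{\{\rank = k-1\}} = W_{k-1}$, as required.

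The main obstacle I anticipate is the subtle handling of reducibility of $W_k$ together with the fact that the rank function on $\bbP^N$ is only constructible, not semicontinuous in any friendly way. Working one irreducible component of $W_k$ at a time and using that a constructible function on an irreducible variety has a unique generic value is what lets me identify $\rho$ unambiguously and execute the trichotomy cleanly; I also need the observation that joins respect irreducible decomposition, so that every component of $W_k + X$ does arise as $W_k^{(i)} + X$ for some $i$.
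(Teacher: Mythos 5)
Your proof is correct and follows essentially the same plan as the paper's: downward induction from $m$ to $g+1$, reduce to an irreducible piece, use subadditivity to confine the generic rank $\rho$ on the join to $\{k-1,k,k+1\}$, and rule out $\rho=k$ and $\rho=k+1$ via maximality of irreducible components, the vertex proposition, and nondegeneracy of $X$. The only divergence is how you dispose of $\rho=k+1$: you pass from $W'\subseteq W_{k+1}$ to $W'\subseteq W_k$ using the inductive hypothesis and then rerun the $\rho=k$ contradiction, whereas the paper forms the double join $W+2X\subseteq W_{h+1}+X\subseteq W_h$ and concludes $X\subseteq 2X\subseteq\Vertex(W)$; your route is marginally more direct but the substance is the same.
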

\begin{proof}
Let $W$ be an irreducible component of $W_m$.
A general point of $W + X$ has rank $m$ or $m-1$.
If the general point has rank $m$ then $W + X \subseteq W_m$.
Since $W+X$ is irreducible, it is contained in one of the irreducible components of $W_m$;
since $W \subseteq W+X$, it must be $W+X = W$.
But then $X \subseteq \Vertex(W)$, contradicting the nondegeneracy of $X$.
So $W + X \subseteq W_{m-1}$, which shows $W_m + X \subseteq W_{m-1}$.

Suppose inductively $W_{h+1} + X \subseteq W_h$, where $m > h \geq g+1$.
Let $W$ be an irreducible component of $W_h$.
A general point of $W + X$ has rank $h+1$, $h$, or $h-1$.
It cannot be $h$, or else once again $W \subseteq W+X \subseteq W_h$, $W = W+X$, and $X \subset \Vertex(W)$.
And it cannot be $h+1$, or else $W + X \subseteq W_{h+1}$,
which means $W+2X \subseteq W_{h+1} + X \subseteq W_h$ by induction.
But then $W+2X$ is contained in an irreducible component of $W_h$, which must be $W$ since $W \subseteq W+2X$.
So then $W = W+2X$ and $X \subseteq 2X \subseteq \Vertex(W)$.
Hence $W + X \subseteq W_{h-1}$, which shows $W_h + X \subseteq W_{h-1}$.
\end{proof}

\begin{remark}
In \S\ref{sect_binary_forms} and \S\ref{sect: 2x4x4 tensors} we will give examples where $W_k + X = W_{k-1}$ for $g+1 \leq k \leq m$.
It is an interesting problem to find an example where the inclusion $W_k + X \subseteq W_{k-1}$ is strict.
\end{remark}

\begin{corollary}
For $1 \leq k \leq m-g$, $\sigma_k(X) = kX \subset W_{m-k}$.
\end{corollary}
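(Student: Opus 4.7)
The corollary is a direct consequence of Theorem~\ref{thm_nesting_of_join_and_loci}, which I will iterate. Note that if $m = g$ the range $1 \leq k \leq m-g$ is empty and there is nothing to prove, so assume $m \geq g+1$. The strategy has three short steps: (i) iterate the nesting of joins to get $W_m + jX \subseteq W_{m-j}$ for $1 \leq j \leq m-g$; (ii) choose a point $p$ of rank $m$ and compare $kX$ to $\{p\} + kX$; (iii) combine.

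\textbf{Step (i).} Theorem~\ref{thm_nesting_of_join_and_loci} gives $W_h + X \subseteq W_{h-1}$ for every $h$ with $g+1 \leq h \leq m$. Adding $X$ to both sides and feeding the inclusion into itself yields, by an immediate induction on $j$,
\[
  W_m + jX \;\subseteq\; W_{m-1} + (j-1)X \;\subseteq\; \dotsb \;\subseteq\; W_{m-j},
\]
valid as long as each intermediate index $m-j+1$ satisfies $m-j+1 \geq g+1$, i.e., $j \leq m-g$. Here I use that $A \subseteq B$ implies $A + X \subseteq B + X$, which is clear from the definition of a join.

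\textbf{Step (ii).} By the definition of $m$ as the maximal rank, the set $W_m$ is nonempty; pick any $p \in W_m$. For any closed subvariety $Y \subseteq \bbP^N$ one has $Y \subseteq \{p\} + Y$ (every point $y \in Y$ lies on the line joining $y$ and $p$). Taking $Y = kX$ gives
\[
  kX \;\subseteq\; \{p\} + kX \;\subseteq\; W_m + kX.
\]

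\textbf{Step (iii).} For $1 \leq k \leq m-g$, stringing steps (i) and (ii) together,
\[
  \sigma_k(X) \;=\; kX \;\subseteq\; W_m + kX \;\subseteq\; W_{m-k},
\]
as required.

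\textbf{Expected obstacles.} I do not anticipate any real obstacle; the work is entirely in Theorem~\ref{thm_nesting_of_join_and_loci}, and the only subtlety is bookkeeping on the range of indices so that the iteration stops before reaching $W_g = \bbP^N$, which is why the hypothesis $k \leq m-g$ is exactly what is needed.
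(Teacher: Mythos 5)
Your proof is correct and is essentially the argument that the paper leaves implicit (the corollary is stated without proof). Iterating $W_h + X \subseteq W_{h-1}$ to obtain $W_m + kX \subseteq W_{m-k}$ for $k \leq m-g$, and then observing that $kX \subseteq W_m + kX$, is exactly the natural route from Theorem~\ref{thm_nesting_of_join_and_loci}; the bookkeeping on the index range is handled correctly.
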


\begin{corollary}\label{non containment}
For $1 \leq k \leq g-1$, $\sigma_k(X) \not\subset W_{2g-k+1}$.
In particular if $m=2g$, then for $1 \leq k \leq g-1$, $\sigma_k(X) \not\subset W_{m-k+1}$.
\end{corollary}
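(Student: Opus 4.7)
The plan is a proof by contradiction using iterated application of Theorem~\ref{thm_nesting_of_join_and_loci}. Suppose, for some $k$ with $1 \leq k \leq g-1$, that $\sigma_k(X) \subseteq W_{2g-k+1}$. Since $X \subseteq \sigma_k(X)$, this forces $W_{2g-k+1}$ to be nonempty, and hence $m \geq 2g-k+1 \geq g+2$.

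Next, I would apply the nesting $W_j + X \subseteq W_{j-1}$ (valid for $g+1 \leq j \leq m$) repeatedly. One application gives
\[
\sigma_{k+1}(X) \;=\; \sigma_k(X) + X \;\subseteq\; W_{2g-k+1} + X \;\subseteq\; W_{2g-k}.
\]
After $j$ steps I obtain $\sigma_{k+j}(X) \subseteq W_{2g-k+1-j}$. Each intermediate use of nesting is legitimate so long as the index being decreased lies in $[g+1,m]$; since the indices run from $2g-k+1$ down and since $m \geq 2g-k+1$, the upper bound is automatic, while the condition $j \leq g-k+1$ ensures we stay at or above $g+1$.

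Taking $j = g-k$ gives $\sigma_g(X) \subseteq W_{g+1}$. But $\sigma_g(X) = \bbP^N$ by definition of the generic rank, whereas $W_{g+1}$ is contained in the closed proper subset of $\bbP^N$ consisting of points of rank strictly greater than $g$ (since the locus of rank $g$ is dense open). Hence $W_{g+1} \subsetneq \bbP^N$, the desired contradiction. The final assertion is immediate from the substitution $m = 2g$, which makes $m-k+1 = 2g-k+1$.

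The only real obstacle is bookkeeping: one must carefully check that every instance of the nesting theorem invoked along the chain uses a $W$-index in the valid range $[g+1,m]$. Given the starting inequality $m \geq 2g-k+1$ and the constraint $k \leq g-1$, all the required inequalities are transparent, so the argument is essentially a short telescoping application of Theorem~\ref{thm_nesting_of_join_and_loci}.
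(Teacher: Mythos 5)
Your proof is correct and takes essentially the same route as the paper: both iterate the nesting $W_j + X \subseteq W_{j-1}$ starting from $\sigma_k(X) \subseteq W_{2g-k+1}$, adding $g-k$ copies of $X$ to reach $\PP^N = \sigma_g(X) \subseteq W_{g+1} \subsetneq \PP^N$, a contradiction. The paper compresses the telescoping chain into a single display $\PP^N = kX + (g-k)X \subseteq W_{2g-k+1} + (g-k)X \subseteq W_{g+1}$, whereas you unroll it one step at a time with explicit index bookkeeping; the content is identical.
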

\begin{proof}
If $\sigma_k(X) \subseteq W_{2g-k+1}$, then
\[
  \bbP^N = gX = kX + (g-k)X \subseteq W_{2g-k+1} + (g-k)X \subseteq W_{g+1} \subsetneq \bbP^N,
\]
a contradiction.
\end{proof}

\begin{remark}
Containments in the other direction need not hold.
For an example where $W_m \not\subset \sigma_{g-1}(X)$, see Remark \ref{remark: wm not contained in secant}.
\end{remark}

We give a sharp bound on the dimension of the high rank loci.

\begin{theorem}
Let $X$ be an irreducible variety in $\bbP^N$ and let $g$ be the generic rank with respect to $X$.
For every $k \geq 1$, $\codim W_{g+k} \geq 2k-1$.
\end{theorem}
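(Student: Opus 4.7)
The plan is to combine the nesting statement of Theorem~\ref{thm_nesting_of_join_and_loci} with the dimension estimate for iterated joins in Corollary~\ref{dimension bound for chains of joins}, applied to an irreducible component of $W_{g+k}$.

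If $W_{g+k}$ is empty the statement is vacuous, so assume it is nonempty and let $W$ be any irreducible component. First I would iterate Theorem~\ref{thm_nesting_of_join_and_loci}: from $W_{g+j} + X \subseteq W_{g+j-1}$ for each $1 \leq j \leq k$, a trivial induction on $j$ gives
\[
  W + jX \subseteq W_{g+k-j} \qquad \text{for } 0 \leq j \leq k-1.
\]
In particular, $W + (k-1)X \subseteq W_{g+1}$. Because $g$ is the generic rank, $W_{g+1}$ is a proper closed subset of $\bbP^N$, so $W + (k-1)X \neq \bbP^N$.

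Next I would apply Corollary~\ref{dimension bound for chains of joins} to the irreducible variety $W$ and the nondegenerate irreducible variety $X$: since $W + (k-1)X$ is a proper subvariety of $\bbP^N$, we must have
\[
  \dim\bigl(W + (k-1)X\bigr) \geq \dim W + 2(k-1).
\]
Combining this with $W + (k-1)X \subseteq W_{g+1} \subsetneq \bbP^N$ yields $\dim W + 2(k-1) \leq N - 1$, i.e.\ $\codim W \geq 2k - 1$. Since this bound holds for every irreducible component of $W_{g+k}$, we conclude $\codim W_{g+k} \geq 2k-1$.

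There is no real obstacle: everything follows formally from the two ingredients already available. The only subtlety is bookkeeping — making sure the induction is run on irreducible components (so that Corollary~\ref{dimension bound for chains of joins} applies) and that $W_{g+1} \neq \bbP^N$ is invoked to keep the chain of joins inside a proper subvariety. If one wanted to show sharpness (justifying the adjective ``sharp'' in the theorem statement) one would separately exhibit examples where equality is attained; but for the inequality itself this is the complete argument.
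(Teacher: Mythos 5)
Your proof is correct and follows essentially the same approach as the paper: iterate the nesting theorem to get $W_{g+k} + (k-1)X \subseteq W_{g+1} \subsetneq \bbP^N$, then invoke Corollary~\ref{dimension bound for chains of joins}. The only (harmless) difference is that you explicitly pass to an irreducible component of $W_{g+k}$ so the corollary's hypotheses are literally satisfied, whereas the paper applies it directly to $W_{g+k}$ with the usual understanding that $\dim$ of a reducible variety means the maximum over components.
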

\begin{proof}
We have $W_{g+k} + (k-1)X \subseteq W_{g+1} \neq \bbP^N$ by Theorem \ref{thm_nesting_of_join_and_loci}.
Then $N > \dim ( W_{g+k} + (k-1)X ) \geq \dim W_{g+k} + 2(k-1)$ by Corollary \ref{dimension bound for chains of joins}.
\end{proof}

See \S\ref{sect_binary_forms} for an example where $\codim W_{g+k} = 2k-1$ holds.

We can give improved upper bounds for ranks in two cases.
First, if $X$ is a curve, we can improve by $1$ the conclusions of Theorem~\ref{upper bound 2g}.
\begin{theorem}\label{thm_dimension_bound_for_curve}
Let $X$ be an irreducible nondegenerate curve in $\bbP^N$.
Let $g$ be the generic rank and $m$ the maximal rank with respect to $X$.
Then $m \leq 2g-1$.
Moreover, if in addition the last nontrivial secant variety $\sigma_{g-1} (X)$ is a hypersurface, 
   then  $m\le 2g -2$.
\end{theorem}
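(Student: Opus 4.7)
The plan is to combine Theorem \ref{upper bound codim+1} with the non-defectivity of secant varieties of $X$, which follows from Corollary \ref{dimension bound for chains of joins}. Since $X$ is a curve, Theorem \ref{upper bound codim+1} gives $m \leq \codim X + 1 = N$, so it suffices to pin down $N$ in terms of $g$.

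First I would establish that $\dim \sigma_{g-1}(X) = 2g - 3$. The upper bound $\dim \sigma_r(X) \leq 2r - 1$ is the standard parameter count: $\sigma_r(X)$ is the closure of the image of the rational map $X^r \times \bbP^{r-1} \dashrightarrow \bbP^N$ sending $(x_1, \dots, x_r, [\lambda_1:\dots:\lambda_r])$ to $[\sum \lambda_i x_i]$, and the source has dimension $r \cdot \dim X + (r-1) = 2r - 1$. For the matching lower bound, I apply Corollary \ref{dimension bound for chains of joins} with $W = X$ and $k = g - 2$: because $\sigma_{g-1}(X) = X + (g-2)X \neq \bbP^N$ by definition of $g$, the corollary forces $\dim \sigma_{g-1}(X) \geq \dim X + 2(g-2) = 2g - 3$.

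Next I extract a dichotomy on $N$. From $\sigma_{g-1}(X) \subsetneq \bbP^N$ together with $\dim \sigma_{g-1}(X) = 2g - 3$, we get $N \geq 2g - 2$; from $\sigma_g(X) = \bbP^N$ together with the same trivial upper bound $\dim \sigma_g(X) \leq 2g - 1$, we get $N \leq 2g - 1$. So $N \in \{2g-2,\, 2g-1\}$. Moreover, $\sigma_{g-1}(X)$ is a hypersurface precisely when $N - 1 = 2g - 3$, i.e., $N = 2g - 2$.

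Finally I invoke Theorem \ref{upper bound codim+1} to conclude $m \leq \codim X + 1 = N$. Hence $m \leq N \leq 2g - 1$ unconditionally, and when $\sigma_{g-1}(X)$ is a hypersurface we have $N = 2g - 2$, giving $m \leq 2g - 2$. The only real (and modest) obstacle is extracting the non-defectivity equality $\dim \sigma_{g-1}(X) = 2g - 3$ cleanly from the earlier machinery; once that is in hand, both bounds fall out of Theorem \ref{upper bound codim+1} without further work.
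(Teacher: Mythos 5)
Your proof is correct and follows essentially the same route as the paper's: pin down $N \in \{2g-2,\, 2g-1\}$ using the nondefectivity of a nondegenerate curve, distinguish the hypersurface case as $N = 2g-2$, and then apply Theorem~\ref{upper bound codim+1} to get $m \leq \codim X + 1 = N$. The one difference is that the paper cites Ådlandsvik for the fact that $\dim kX = \min\{N, 2k-1\}$ for curves, while you derive it on the spot — the upper bound from the standard parameter count, the lower bound from Corollary~\ref{dimension bound for chains of joins} applied with $W = X$ and $k = g-2$ — which makes your argument a bit more self-contained but does not change the structure of the proof.
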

\begin{proof}
First recall that $X$ is nondefective, meaning that for $k \geq 1$, $\dim kX = \min\{N,2k-1\}$, 
see for example \cite[Introduction, Remark 1.6]{MR947474}.
Then $N > \dim (g-1)X = 2g-3$, and $N = \dim gX \leq 2g-1$.
Hence $N \in \{2g-1,2g-2\}$.

If $N$ is odd, $N = 2g-1$, then $\codim X = N-1 = 2g-2$.
By Theorem \ref{upper bound codim+1}, $m \leq \codim X + 1 = 2g-1$.

If $N$ is even, $N = 2g-2$, then $\dim \sigma_{g-1}(X) = 2g-3 = N-1$.
This is the case in which $\sigma_{g-1}(X)$ is a hypersurface.
By Theorem \ref{upper bound codim+1} again, $m \leq \codim X + 1 = 2g-2$.
\end{proof}

\begin{remark}
The above result fails over the reals, see \cite[Theorem 2.10]{MR3506500}.
\end{remark}

Second, if $X$ is a projective homogeneous variety in a homogeneous embedding then we can obtain the same improvement.
\begin{theorem}\label{thm_dimension_bound_for_homogeneous}
Let $G$ be a connected algebraic group, $V$ an irreducible representation of $G$,
and $X = G/P \subset \bbP V$ a projective homogeneous variety.
Let $g$ be the generic rank and $m$ the maximal rank with respect to $X$.
Then $m \leq 2g-1$.
Moreover, if in addition the last nontrivial secant variety $\sigma_{g-1} (X)$ is a hypersurface, 
   then  $m\le 2g -2$.
\end{theorem}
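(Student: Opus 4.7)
The strategy is to emulate the argument for curves (Theorem~\ref{thm_dimension_bound_for_curve}) but to replace nondefectivity with $G$-equivariance. The key new input is an auxiliary lemma I would prove first: \emph{any nonempty $G$-invariant closed subset $Y \subseteq \bbP V$ contains $X$}. One may assume $G$ is reductive, since in characteristic zero the unipotent radical acts trivially on any irreducible representation. Let $B \subseteq G$ be a Borel subgroup; the $B$-stable lines in $V$ are precisely the highest weight lines, and in an irreducible representation the highest weight space is one-dimensional, so $B$ has a unique fixed point $[v_\lambda] \in \bbP V$. Borel's fixed point theorem applied to the $B$-action on $Y$ gives $[v_\lambda] \in Y$, whence $Y \supseteq G \cdot [v_\lambda] = X$ by $G$-invariance.

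For the first bound, suppose for contradiction that $m \geq 2g$. Since rank is $G$-invariant, $W_m$ is $G$-invariant, closed, and nonempty, so the lemma gives $W_m \supseteq X$. Iterating Theorem~\ref{thm_nesting_of_join_and_loci} a total of $m-g-1$ times yields
\[
  \sigma_{m-g}(X) = X + (m-g-1)X \subseteq W_m + (m-g-1)X \subseteq W_{g+1}.
\]
Because $m - g \geq g$, the left-hand side equals $\bbP^N$, forcing $W_{g+1} = \bbP^N$. This contradicts $W_{g+1} \subsetneq \bbP^N$ (the rank-$g$ locus is Zariski open and dense), so $m \leq 2g - 1$.

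For the sharper bound when $\sigma_{g-1}(X)$ is a hypersurface, note that $V^*$ is also irreducible of the same dimension; if $\dim V \geq 2$ (the case $\dim V = 1$ is trivial), then $V^*$ has no $G$-stable line, equivalently, no nonzero $G$-semi-invariant linear form exists on $V$. The irreducible $G$-invariant hypersurface $\sigma_{g-1}(X)$ is the zero locus of a $G$-semi-invariant form of degree $d$, and the previous remark forces $d \geq 2$. Then for any $q \in \bbP V$ a general line $\ell$ through $q$ meets $\sigma_{g-1}(X)$ in $d \geq 2$ distinct points, and by a standard general-position argument these may be chosen in the dense open rank-$(g-1)$ locus of $\sigma_{g-1}(X) = W_{g-1}$; writing $q$ as a linear combination of two of them gives $\rank(q) \leq 2(g-1) = 2g - 2$.

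The main conceptual hurdle is establishing the first step, namely that $W_m \supseteq X$ whenever $W_m$ is nonempty, which relies crucially on irreducibility of $V$ via the Borel fixed point theorem; once that lemma is in hand, the rest of the first bound reduces to a mechanical iteration of the nesting theorem. For the sharper bound, the only remaining technical point is verifying that the intersection points of $\ell$ with $\sigma_{g-1}(X)$ generically have rank exactly $g - 1$—not smaller (the lower-rank locus is contained in $\sigma_{g-2}(X)$, which is properly contained in $\sigma_{g-1}(X)$) and not strictly greater (the higher-rank border-limit locus is contained in a proper closed subset of $\sigma_{g-1}(X)$, since its complement is dense)—so a general line through $q$ avoids both pathological loci.
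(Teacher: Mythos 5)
Your proof of the first inequality $m \le 2g-1$ takes essentially the same route as the paper. You establish via Borel's fixed point theorem that every nonempty $G$-invariant closed subset of $\bbP V$ contains $X$ (this is exactly the content of the reference to Fulton--Harris that the paper cites), conclude $X \subseteq W_m$, and then iterate the nesting theorem. The paper packages the last step as an appeal to Corollary~\ref{non containment}, but unwinding that corollary gives precisely the chain of inclusions you wrote; the arguments are the same.

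For the hypersurface case your argument is genuinely different from the paper's, and in some ways cleaner. The paper argues by contradiction: assuming $m=2g-1$, it combines $X \subseteq W_m$ with the nesting theorem to get $\sigma_{g-1}(X) \subseteq W_{g+1} \subsetneq \bbP V$, observes that an irreducible hypersurface contained in a proper closed subset must be one of its irreducible components, and then finds a rank clash (general points of $\sigma_{g-1}(X)$ have rank $g-1$, while general points of every component of $W_{g+1}$ have rank $g+1$). Your argument instead exploits homogeneity to control the \emph{degree} of the hypersurface: irreducibility of $V^*$ in dimension $\ge 2$ rules out $G$-semi-invariant linear forms, so the $G$-semi-invariant form cutting out $\sigma_{g-1}(X)$ has degree $d \ge 2$. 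This makes the Blekherman--Teitler line argument (the proof of Theorem~\ref{upper bound 2g}) sharpen automatically: a general line through $q \notin \sigma_{g-1}(X)$ meets the hypersurface transversally in $d \ge 2$ distinct points lying in the dense rank-$(g-1)$ locus, and expressing $q$ as a combination of two of them gives $\rank(q) \le 2(g-1)$ directly. (The transversality and general-position claims you gesture at are standard and hold since $\sigma_{g-1}(X)$ is reduced, its rank-$(g-1)$ locus contains a dense open subset, and if $q \in \sigma_{g-1}(X)$ the bound is already trivial.) Your version avoids the nesting theorem entirely in this case, is constructive rather than by contradiction, and produces an explicit low-rank decomposition, at the cost of the extra observation about semi-invariant forms; the paper's version stays closer to the machinery already set up for the first inequality. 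Both are correct.
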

\begin{proof}
  $X$ is the unique closed orbit of $G$ on $\bbP V$, see for example \cite[Claim 23.52]{MR1153249}.
  Since $X$ is $G$-invariant, so is each rank locus $W_k$.
  Every $G$-invariant closed set contains $X$, in particular $X \subset W_m$.
  The asssertion $m=2g$ contradicts Corollary \ref{non containment}, thus $m \le 2g-1$.

  If in addition $\sigma_{g-1} (X)$ is a hypersurface, and $m = 2g-1$, 
    then 
    \[
      \sigma_{g-1}(X) = (g-1)X \subseteq W_m + (g-2)X,
    \]
    since $X \subset W_m$.
    Then
    \[
       \sigma_{g-1}(X) \subseteq W_{2g-1} + (g-2)X \subseteq W_{g+1} \subsetneqq \PP V.
    \]
  Therefore $W_{g+1}$ contains an irreducible component equal to $\sigma_{g-1} (X)$.
  This contradicts the definition of the rank locus $W_{g+1}$: general points in $\sigma_{g-1}(X)$
  have rank $g-1$, but general points in each component of $W_{g+1}$ have rank $g+1$.
  It follows that whenever $\sigma_{g-1} (X)$ is a hypersurface, we must have $m \le 2g-2$.
\end{proof}

The bounds in both Theorem \ref{thm_dimension_bound_for_curve} and Theorem \ref{thm_dimension_bound_for_homogeneous}
are attained when $X$ is a rational normal curve, see \S\ref{sect_binary_forms}.

\begin{example}
The maximal rank is strictly less than twice the generic rank in the following cases.
\begin{enumerate}
\item Waring rank, when $X$ is a Veronese variety.
\item Tensor rank, when $X$ is a Segre variety.
\item Alternating tensor rank, when $X$ is a Grassmannian in its Pl\"uck\-er embedding.
\item Multihomogeneous rank, also called partially symmetric tensor rank, when $X$ is a Segre-Veronese variety.
\end{enumerate}
\end{example}

\section{Veronese Varieties}

When $X = \nu_d(\bbP^{n-1}) \subset \bbP^N$ is a Veronese variety, then $\bbP^N$ is the projective space of degree $d$ homogeneous forms
in $n$ variables and $X$ corresponds to the $d$th powers.
Rank with respect to $X$ is called \emph{Waring rank}.
The Waring rank of a homogeneous form of degree $d$ is the least $r$ such that the form can be written as a sum of $r$ $d$th powers of linear forms.
For example, $xy = \frac{1}{4}(x+y)^2 - \frac{1}{4}(x-y)^2$, so $\rank(xy) \leq 2$; since $xy \neq \ell^2$, $\rank(xy) = 2$.

The main result in this section
is a lower bound for the dimension of the maximal rank locus $W_m$ with respect to any Veronese variety.
\begin{theorem}\label{thm_dimension_bound_for_Veronese}
   Suppose $X=\nu_d(\PP V) \subset \PP (S^dV)$ is the Veronese variety and that $\dim V = n \ge 3$.
   Then every irreducible component of  the rank locus $W_m$ has dimension at least $\binom{n+1}{2}-1$.
   Moreover, if $W$ is an irreducible component of $W_m$ with $\dim W = \binom{n+1}{2}-1$, 
     then $d$ is even and $W$ is the set of all $\frac{d}{2}$-th powers of quadrics.
\end{theorem}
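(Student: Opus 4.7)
The plan is to exploit the $\GL(V)$-invariance of the Veronese $X=\nu_d(\PP V)$, which forces each irreducible component of $W_m$ to be $\GL(V)$-invariant, and then to combine this with the orbit dimension bound furnished by Proposition~\ref{prop_dimension_of_orbit_of_a_concise_form}.

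First I would note that the natural linear action of $\GL(V)$ on $S^dV$ sends $X$ to itself, hence preserves $\rank_X$ and every rank locus $W_k$. Connectedness of $\GL(V)$ then ensures each irreducible component $W$ of $W_m$ is $\GL(V)$-stable. Choosing a general point $[F]\in W$, the whole orbit $\GL(V)\cdot[F]$ lies in $W$, so
\[
   \dim W \;\geq\; \dim \GL(V)\cdot [F].
\]
The goal is then to invoke Proposition~\ref{prop_dimension_of_orbit_of_a_concise_form}, which provides $\dim \GL(V)\cdot [F] \geq \binom{n+1}{2}-1$ for concise $F$, with equality characterizing $F$ as a $d/2$-th power of a quadric (in particular forcing $d$ even).

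The main obstacle is ensuring conciseness of a general $F\in W$. If $F$ were not concise, it would lie in $S^dV'$ for some proper subspace $V' \subsetneq V$; by a standard apolarity argument, every minimal Waring decomposition of such an $F$ can be chosen with linear summands in $V'$, so $\rank_X(F)$ equals the Waring rank of $F$ as a form in $\dim V' < n$ variables. This would bound $\rank_X(F)$ by the maximal Waring rank for a smaller Veronese, and an auxiliary argument---using for instance Theorem~\ref{upper bound 2g} applied to the sub-Veronese together with the known comparison of generic ranks, and the fact that $\GL(V)$-invariance forces $W$ to be swept out by all translates of the sub-locus---must show this is strictly less than $m$. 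Handling each possible dimension of $V'$ is the technical core of this step.

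Granting conciseness of a general $[F] \in W$, Proposition~\ref{prop_dimension_of_orbit_of_a_concise_form} gives $\dim W \geq \binom{n+1}{2}-1$ at once. In the equality case $\dim W = \binom{n+1}{2}-1$, the chain of inequalities above forces the orbit of a general $F\in W$ to have dimension exactly $\binom{n+1}{2}-1$, so by the equality clause of the same proposition $F = q^{d/2}$ for some quadric $q$ (forcing $d$ even). Since the power map $[q]\mapsto [q^{d/2}]$ from $\PP(S^2V)$ to $\PP(S^dV)$ is finite, its image is an irreducible variety of dimension $\binom{n+1}{2}-1$ containing $\overline{\GL(V)\cdot [F]}$; by dimension and irreducibility it must coincide with $W$, exhibiting $W$ as the set of all $d/2$-th powers of quadrics.
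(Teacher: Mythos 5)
Your overall plan is the paper's: exploit $\GL(V)$-invariance to get $\overline{\GL(V)\cdot [F]} \subseteq W$, establish conciseness of a general $[F]\in W$, then cite Proposition~\ref{prop_dimension_of_orbit_of_a_concise_form} for the dimension bound and equality characterization. The final identification of $W$ with the image of the power map in the boundary case is also fine.

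The gap is the conciseness step. You correctly flagged it as "the technical core," but the route you sketch does not close it and is substantially harder than it needs to be. Your plan is to bound $\rank(F) \le m_{k,d}$ for some $k<n$ and then compare to $m_{n,d}$. The problem is that we have no useful lower bounds on $m_{n,d}$ in general: indeed it is an open question whether $m_{n,d} > g_{n,d}$ except for small $n$. Using Theorem~\ref{upper bound 2g} on the sub-Veronese gives $m_{k,d} \le 2g_{k,d}$, and combinatorics show this is at most $g_{n,d}$ for $d$ large relative to $n$; but then you only get $\rank(F) \le g_{n,d} \le m_{n,d}$, \emph{not} the strict inequality you need. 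Making this work would require extra case analysis (e.g.\ separately treating $m=g$, where $W_m=\PP(S^d V)$ and the theorem holds trivially) and likely a restriction like $d\ge n+1$. The paper sidesteps all of this with Lemma~\ref{lem_form_of_max_rank_is_concise}, which is a one-line argument: if $F$ of maximal rank were non-concise and omitted the variable $x_n$, then by the rank-additivity result of Carlini--Catalisano--Chiantini \cite[Proposition 3.1]{MR3320211}, $\rank(F + x_n^d) = \rank(F)+1 > m$, a contradiction. This works for all $d\ge2$ with no case distinctions. I'd recommend replacing your sketched argument with this.

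Two smaller comments. First, your reduction "every minimal Waring decomposition of a non-concise $F$ can be chosen with summands in $V'$" is true and standard, but for the additivity route it is unnecessary; all you need is the observation that $F$ omits a variable. Second, the finiteness of the power map $[q]\mapsto[q^{d/2}]$ is correct (it is injective and proper), and your argument that the image, being irreducible of dimension $\binom{n+1}{2}-1$ and containing the orbit closure, must equal $W$ is sound.
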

This will be proved in \S\ref{sect: proof of dimension bound for veronese}.
The proof uses a lower bound for the dimension of the orbit of a homogeneous form under linear substitutions of variables,
which may be of independent interest, see \S\ref{sect: orbits of homogeneous forms}.
First, we review some background information on apolarity, conciseness, and generic and maximal Waring rank.
We also give a full description of the rank loci with respect to a rational normal curve,
that is, a Veronese embedding of $\bbP^1$, corresponding to Waring rank of binary forms.

\subsection{Apolarity}

Let $S = \bbk[x_1,\dotsc,x_n]$ and let $T = \bbk[\alpha_1,\dotsc,\alpha_n]$, called the dual ring of $S$.
We let $T$ act on $S$ by differentiation, with $\alpha_i$ acting as partial differentiation by $x_i$.
This is called the \emph{apolarity action} and denoted by the symbol $\hook$,
so that
\[
  (\alpha_1^{a_1} \dotsm \alpha_n^{a_n}) \hook (x_1^{d_1} \dotsm x_n^{d_n}) = \prod_{i=1}^n \frac{d_i!}{(d_i-a_i)!} x_i^{d_i-a_i}
\]
if each $d_i \geq a_i$, or $0$ otherwise.

For $F \in S$, $F^\perp \subseteq T$ is the ideal of $\Theta \in T$ such that $\Theta \hook F = 0$.
For example, $(x_1^{d_1}\dotsm x_n^{d_n})^\perp = (\alpha_1^{d_1+1},\dotsc,\alpha_n^{d_n+1})$.
If $F$ is homogeneous then $F^\perp$ is a homogeneous ideal.
For more details see for example \cite[\S1.1]{MR1735271}.

\subsection{Concise forms}\label{sect: concise forms}

In the terminology of \cite{MR2279854}, a form $F \in S^d V \cong \bbk[x_1,\dotsc,x_n]_d$ is called \emph{concise}
with respect to $V$ (or with respect to $x_1,\dotsc,x_n$) if $F$ cannot be written as a homogeneous form in fewer variables,
even after a linear change of coordinates; that is, $F$ is concise if $V' \subseteq V$ and $F \in S^d V'$ implies $V' = V$.
The following are equivalent: $F$ is concise; the projective hypersurface $V(F)$ is not a cone (i.e., has empty vertex);
the ideal $F^\perp$ has no linear elements; the $(d-1)$th derivatives of $F$ span the linear forms.
Note that the last two conditions can be checked directly by computation.

Write $\langle F \rangle$ for the span of the $(d-1)$th (degree $1$) derivatives of $F$.
We have $\langle F \rangle = ((F^\perp)_1)^\perp$, that is, $\langle F \rangle$ is perpendicular
to the space of linear forms in the ideal $F^\perp$.
Nonzero elements of $\langle F \rangle$ (or, elements of a basis of $\langle F \rangle$) are called \emph{essential variables} of $F$.
We have $F \in S^d \langle F \rangle$, see \cite[Proposition 1]{MR2279854}.

\subsection{Generic and maximal Waring rank}

The rank of a quadratic form is equal to its number of essential variables, by diagonalization.
Thus, if $d=2$, then $g=m=n$.
If $n=2$, then $g = \lfloor \frac{d+2}{2} \rfloor$ and $m=d$ by work of Sylvester and others in the 19th century,
see for example \cite[\S1.3]{MR1735271} and references therein.
We will review the $n=2$ case in the next section.

For $n,d \geq 3$ the generic rank is known by the famous Alexander-Hirschowitz Theorem:
\begin{theorem}[\cite{MR1311347}]
   Suppose $n, d \ge 3$. The generic rank $g=g_{n,d}$ with respect to the Veronese variety $\nu_d(\PP^{n-1})$ is as follows.
   \begin{enumerate}
      \item If $n=3$ and $d=4$, then $g_{3,4} =6$,
      \item if $n=4$ and $d=4$, then $g_{4,4} =10$,
      \item if $n=5$ and $d=3$, then $g_{5,3} =8$,
      \item if $n=5$ and $d=4$, then $g_{5,3} =15$,
      \item and otherwise, if $(n,d) \notin \set{(3,4), (4,4), (5,3), (5,4)}$, then 
      \[
          g_{n,d} = \left\lceil \frac{1}{n}\binom{d+n-1}{d} \right\rceil = \left\lceil \frac{(d+n-1)!}{d! \cdot n!}\right\rceil.
      \]
   \end{enumerate}
   Moreover, the last proper secant variety $\sigma_{g-1}(\nu_d(\PP^{n-1}))$ is a hypersurface if and only if
   $\binom{d+n-1}{d} \equiv 1 \pmod{n}$
   or it is an exceptional case, $(n,d) \in \set{(3,4), (4,4), (5,3), (5,4)}$.
\end{theorem}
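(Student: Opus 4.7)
The statement is the Alexander--Hirschowitz theorem, a landmark result; rather than pretending to give an original short proof, I sketch the standard route and isolate what remains once its main content is granted.

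The starting point is Terracini's lemma, which identifies the tangent space to $\sigma_k(\nu_d(\PP^{n-1}))$ at a general smooth point with the span of the affine tangent cones to $\nu_d(\PP^{n-1})$ at $k$ general points $[\ell_1^d],\dotsc,[\ell_k^d]$. Under apolarity this span is exactly the space $(I_Z)_d \subseteq S^d V$, where $Z \subset \PP^{n-1}$ is the first-infinitesimal-neighborhood (i.e., $2$-fat point) scheme supported at the $k$ points $[\ell_i]$. Hence the theorem reduces to computing, for $k$ general fat points, the dimension of the linear system of degree-$d$ hypersurfaces with assigned double points, and showing that, with the four listed exceptions and the ternary quadric $(n,d)=(2,2)$ (etc.), this linear system has the expected dimension $\max\{0,\binom{n+d-1}{d}-kn\}$.

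First I would handle all the low-dimensional and low-degree ``base cases'' directly: $d=2$ by diagonalization of quadratic forms, and $n=2$ by Sylvester's theorem on binary forms. For $d \geq 3$ and $n \geq 3$ the inductive engine is Hirschowitz's ``m\'ethode d'Horace differentielle'', which specializes some of the $k$ double points onto a hyperplane $H \cong \PP^{n-2}$ and degenerates others to a differential scheme along $H$. Restricting and subtracting $H$ yields a trace and a residue linear system in smaller $(n,d)$, allowing a double induction. Keeping careful track of the arithmetic so that the trace and residue match the expected dimensions is the bulk of the work; the exceptional cases $(n,d) \in \{(3,4),(4,4),(5,3),(5,4)\}$ arise precisely as genuine obstructions to this induction, coming from classical geometric configurations (e.g., the quadric through $9$ general points of $\PP^3$ gives the defect for $(4,4)$; $(3,4)$ is explained by the unique conic through $5$ general plane points, counted with multiplicity). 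Each such exception has to be identified and verified separately, and this case analysis is the main obstacle.

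Granted the generic rank formula, the final ``hypersurface'' claim is a bookkeeping step. In the non-exceptional range, $\sigma_{g-1}(\nu_d(\PP^{n-1}))$ has expected dimension $(g-1)n-1$, while $N = \binom{d+n-1}{d}-1$. The condition $\dim \sigma_{g-1} = N-1$ thus becomes $(g-1)n = \binom{d+n-1}{d}-1$, and comparing with $g=\lceil \tfrac{1}{n}\binom{d+n-1}{d}\rceil$ this holds exactly when $\binom{d+n-1}{d} \equiv 1 \pmod n$. The four exceptional tuples are then checked by the explicit description of the defective secant in each case, which shows that there too $\sigma_{g-1}$ is a hypersurface. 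Altogether, once the Alexander--Hirschowitz dimension count is in hand, the hypersurface criterion is essentially immediate; the real content of the theorem lies in the Horace induction and the exceptional case analysis above.
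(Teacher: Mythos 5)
The paper does not prove this theorem; it states it as a citation to Alexander and Hirschowitz \cite{MR1311347}, so there is no internal proof to compare against. Your sketch is an accurate high-level account of the standard argument: Terracini's lemma reduces the generic rank computation to the dimension of linear systems of degree-$d$ hypersurfaces with $k$ general assigned double points, the base cases $d=2$ and $n=2$ are classical, and the inductive heart is the m\'ethode d'Horace diff\'erentielle of Alexander--Hirschowitz, with the four exceptional pairs arising as genuine defective cases that must be identified and treated by hand (double conic for $(3,4)$, double quadric through nine points for $(4,4)$, and so on). Your derivation of the hypersurface criterion from the rank formula is correct and worth spelling out: writing $\binom{d+n-1}{d}=an+r$ with $0\le r<n$ and $N=\binom{d+n-1}{d}-1$, nondefectivity gives $\dim\sigma_{g-1}=(g-1)n-1$ in the non-exceptional range, and $(g-1)n-1=N-1$ is equivalent to $(g-1)n=\binom{d+n-1}{d}-1$, which forces $r=1$ (and conversely $r=1$ gives $g=a+1$ and $(g-1)n=an=\binom{d+n-1}{d}-1$); the four exceptional cases are checked directly using the known defect in each. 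Of course, what you give is a road map rather than a proof --- the Horace induction and the verification of the exceptional cases occupy the bulk of the original papers --- but for the purposes of this paper, where the theorem is used as input, your outline correctly identifies where the difficulty lies and correctly supplies the short argument for the hypersurface condition, which is the only part not stated verbatim in the usual references.
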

On the other hand, the maximal rank $m=m_{n,d}$ is only known in a few initial cases:
$m_{3,3} = 5$, $m_{3,4} = 7$, $m_{3,5} = 10$, $m_{4,3} = 7$.
See \cite{MR3536055} for details and references.

We have $m_{n,d} > g_{n,d}$ when $n=2$, when $n=3$, and when $n=4$ and $d$ is odd \cite{MR3536055}.
In all other cases, it is an open question whether $m_{n,d} > g_{n,d}$.

\subsection{Binary forms}\label{sect_binary_forms}

Suppose $X = \nu_d(\PP^1) \subset \PP^d$ is the rational normal curve of degree $d$.
We identify $\PP^d$ as the space of forms of degree $d$ in two variables.
Here $X$ is the set of $d$th powers $[\ell^d]$.
In this case the apolarity method provides a full description of the loci $W_k$, as follows.
Let $\tau(X)$ be the tangential variety of $X$.
Some parts of the following statement are well-known, see for example \cite[\S1.3]{MR1735271},
and much (perhaps all) of it is known to experts,
but we include the statement here for lack of a clear reference.
\begin{proposition}
Let $X = \nu_d(\PP^1) \subset \PP^d$ be the rational normal curve of degree $d$.
The generic rank is $g = \lfloor \frac{d+2}{2} \rfloor$ and the maximal rank is $m=d$.
We have $W_m = W_d = \tau(X)$.
For $g < k < m$ we have $W_k = \tau(X) + (d-k)X$.
In particular, we have the following nested inclusions of irreducible varieties (each one of codimension $1$ in the next):
\[
  X \subset \tau(X) \subset 2X \subset \tau(X)+X \subset 3X \subset \tau(X)+2X \subset \dotsb,
\]
equivalently,
\[
  \sigma_1(X) \subset W_{d} \subset \sigma_2(X) \subset W_{d-1} \subset \sigma_3(X) \subset W_{d-2} \subset \dotsb .
\]
If $d = 2g-2$ is even, then the sequence of inclusions ends with:
\[
  \dotsb \subset \sigma_{g-2}(X) \subset W_{g+1} \subset \sigma_{g-1}(X) \subset \sigma_g(X) = \PP^d.
\]
Or, if $d = 2g-1$ is odd, then it ends with:
\[
  \dotsb \subset \sigma_{g-1}(X) \subset W_{g+1} \subset \sigma_g(X) = \PP^d.
\]
\end{proposition}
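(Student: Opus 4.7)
The backbone of the argument is Sylvester's classical theorem on the apolar ideal of a binary form: for any nonzero $F \in S^d V$ with $\dim V = 2$, the ideal $F^\perp \subset T$ in the dual ring is a complete intersection $(\theta_a, \theta_b)$ with $\deg \theta_a = a \leq b = \deg \theta_b$ and $a + b = d + 2$, and $\rank(F) = a$ when $\theta_a$ is squarefree while $\rank(F) = b$ otherwise; see \cite[\S 1.3]{MR1735271}. This immediately gives the generic rank $g = \lfloor (d+2)/2 \rfloor$ (for a generic $F$, $a$ is maximal and $\theta_a$ is squarefree) and the maximal rank $m = d$, attained exactly when $a = 2$ and $\theta_2 = \alpha^2$ for some $\alpha \in T_1$.

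The identification $W_d = \tau(X)$ comes directly from the apolarity lemma. If $\theta_2 = \alpha^2$, then the length-$2$ scheme $V(\alpha^2)$ is a double point dual to some $[\ell_0] \in \PP V$, and $[F]$ lies in the span of its Veronese image, which is the projective tangent line to $X$ at $[\ell_0^d]$, hence $[F] \in \tau(X)$. Conversely a general $\ell^{d-1}n \in \tau(X)$ (with $\ell, n$ linearly independent) has $(\ell^\perp)^2 \in F^\perp$ of degree $2$ and non-squarefree, so by Sylvester it has rank $d$; taking closures yields $\tau(X) \subseteq W_d$.

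For $g < k < d$ and $a = d + 2 - k$, the inclusion $\tau(X) + (d-k)X \subseteq W_k$ follows by iterating Theorem \ref{thm_nesting_of_join_and_loci}, starting from $W_d = \tau(X)$ and applying $W_{j+1} + X \subseteq W_j$ for $j = d-1, d-2, \ldots, k$. For the reverse inclusion, I would take a general $F$ of rank exactly $k$: its generator $\theta_a$ is non-squarefree and generically factors as $\theta_a = \alpha^2 \psi$ with $\psi$ squarefree and coprime to $\alpha$. The apolarity lemma then places $[F]$ in the span of $\nu_d(V(\theta_a))$, a length-$a$ subscheme of $X$ consisting of one double point and $a - 2$ reduced points; this span is the join of a tangent line to $X$ and a secant plane through $a - 2$ points of $X$, hence lies in $\tau(X) + (a-2)X = \tau(X) + (d-k)X$. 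Since this join is closed and contains a Zariski-dense subset of $W_k$, taking closures gives $W_k \subseteq \tau(X) + (d-k)X$.

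The chain of inclusions and its codimension structure then follow formally: from $X \subseteq \tau(X) \subseteq 2X$ one obtains $\sigma_r(X) = X + (r-1)X \subseteq \tau(X) + (r-1)X = W_{d-r+1}$ and $W_{d-r+1} \subseteq 2X + (r-1)X = \sigma_{r+1}(X)$. Nondefectivity of the rational normal curve (cf.\ \cite[Introduction, Remark 1.6]{MR947474}) yields $\dim \sigma_r(X) = \min(d, 2r-1)$ and $\dim(\tau(X) + (r-1)X) = \min(d, 2r)$, so consecutive terms in the alternating chain differ by codimension exactly $1$ until $\PP^d$ is reached, and the parity of $d$ determines whether $\sigma_g(X) = \PP^d$ is reached directly after $\sigma_{g-1}(X)$ (the $d$ even case) or after $W_{g+1}$ (the $d$ odd case). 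The main technical delicacy is the reverse inclusion of the previous paragraph for non-generic $\theta_a$ with higher-multiplicity roots, where spans could a priori involve higher osculating varieties; this is handled by restricting to the open dense stratum where $\theta_a$ has exactly one simple double root and invoking closedness of $\tau(X) + (d-k)X$.
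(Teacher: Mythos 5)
Your proposal is correct in its main ideas and rests on the same classical input as the paper --- Sylvester's description of $F^\perp = (\theta_a, \theta_b)$ with $a + b = d+2$ --- but it takes a genuinely different route to the key equality $W_k = \tau(X) + (d-k)X$. You prove both inclusions directly: $\supseteq$ by iterating Theorem~\ref{thm_nesting_of_join_and_loci} from $W_d = \tau(X)$, and $\subseteq$ by observing that a generic $F$ of rank $k$ has $\theta_a = \alpha^2 \psi$ with $\psi$ squarefree coprime to $\alpha$, so that $[F]$ lies in the span of $\nu_d$ of a double point union $a-2$ reduced points, hence in the join $\tau(X) + (a-2)X$. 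The paper instead only proves the easy inclusion $\supseteq$ and then forces equality by a dimension count plus irreducibility of $W_k$, both extracted from the explicit fibration $\pi_r\colon \sigma_r(X)\setminus\sigma_{r-1}(X) \to \PP(S^r V^*)$, $F \mapsto [\theta_a]$, over the discriminant divisor. Your direct argument is arguably more transparent and geometric; the paper's fibration argument buys the irreducibility of $W_k$ and its codimension in $\sigma_r(X)$ in one stroke, which it then reuses for the dimension chain at the end.

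The one place where your proof is not yet airtight is precisely the density claim you flag at the end: you restrict to the stratum of $W_k$ where $\theta_a$ has a unique simple double root and then take closures, but you do not actually justify that this stratum is dense in $W_k$. A priori $\rank^{-1}(k)$ could have components dominated by $\theta_a$ with worse root structure, and the naive span bound there would involve higher osculating varieties rather than $\tau(X)$. The density does hold, but proving it amounts to essentially the same dimension count over the discriminant locus that the paper runs: the "one simple double root" stratum sits over a dense open of the discriminant divisor and has fibers of full dimension $r-1$, while the bad strata sit over codimension~$\geq 2$ loci of $\PP(S^r V^*)$ with fibers of at most the same dimension, hence are strictly smaller. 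You should either spell this out or argue directly that even for $\theta_a$ with a root of multiplicity $m\geq 3$ the span of $\nu_d(V(\theta_a))$ still lands in $\tau(X)+(a-2)X$ (true for the rational normal curve, since each osculating span is a join of the tangent line with points of the curve), which would make the inclusion $W_k \subseteq \tau(X)+(d-k)X$ unconditional without any density step.
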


\begin{proof}
Fix $S = \bbk[x,y]$ and $V = S_1$, the space of linear forms in $S$.
We identify $S_d$ with $S^d V$ and the dual ring $T = \bbk[\alpha,\beta]$ with the symmetric algebra on $V^*$.

For a homogeneous polynomial $F \in S^d V$ let $F^{\perp}$ be the apolar ideal of $F$.
Then $F^{\perp} = (\Theta, \Psi)$ is a homogeneous complete intersection with $\deg \Theta =r \le \deg \Psi = d+2-r$,
see for example \cite[Theorem 1.44]{MR1735271}.
Note that both $\Theta$ and $\Psi$ are homogeneous polynomials in two variables, hence they are products of linear factors.
Then (see for example \cite[\S1.3]{MR1735271}, \cite{MR2754189})
$F \in \sigma_{r} (X) \setminus \sigma_{r-1} (X)$;
if $\Theta$ has all distinct roots, then $\rank(F) = r$; and
if $\Theta$ has at least one repeated root, then $\rank(F) = d+2-r$.

Note further, that if $r < d+2-r$, the polynomial $\Theta$ is unique up to rescaling.
In particular, whether it has distinct roots or not does not depend on any choices,
so the conditions for $\rank(F)=r$ or $d+2-r$ are well defined. 
(If $r=d+2-r$, then the conclusions are the same in both cases.)
Still assuming $r < d+2-r$, the uniqueness of $\Theta$ determines a well defined map:
\begin{align*}
   \pi_{r}\colon \sigma_r(X) \setminus \sigma_{r-1}(X) & \to \PP (S^{r} V^*), \\
                                                     F &\mapsto [\Theta].
\end{align*}
The map is surjective and every fiber $\pi_r^{-1}[\Theta]$ is a Zariski open subset of a linear subspace $\PP^{r-1} \subset \PP (S^d V)$, 
where $\PP^{r-1}$ is the linear span of $\nu_d(V(\Theta))$.
The locus of $\Theta$ with a double root is an irreducible divisor in $\PP (S^{r} V^*)$ and (the closure of) its preimage $W_{d+2-r}$ is also irreducible of codimension $1$ in $\sigma_r(X)$.

From this we see that $\dim \sigma_r(X) = 2r-1$,
so the generic rank $g = \lceil \frac{d+1}{2} \rceil = \lfloor \frac{d+2}{2} \rfloor$.
Furthermore, the maximal rank is $m=d$, and it appears whenever $r$ is $2$ and $\Theta$ has a double root,
so that $F$ is in the span of a double point, i.e., $F$ is in the tangential variety $\tau(X)=W_m=W_d$.

In between, for $g < k < m$ we have $\tau(X) + (d-k)X = W_m + (m-k)X \subseteq W_k$.
Both $\tau(X) + (d-k)X$ and $W_k$ are irreducible.
We have $\dim \tau(X) + (d-k)X \geq 2(d-k+1) = \dim \sigma_{d-k+2}(X)-1 = \dim W_k$
by Corollary~\ref{dimension bound for chains of joins} and the dimension computations above.
Hence $W_k = \tau(X) + (d-k)X$.

Since $X \subset \tau(X) \subset 2X$ we have $lX \subset \tau(X) + (l-1)X \subset (l+1)X$ for $1 \leq l < g$,
where the inclusions are of irreducible varieties, each of codimension $1$ in the next.
This proves the inclusions displayed in the statement.
\end{proof}

\subsection{Powers of quadratic forms}
Fix $n$, let $Q_n = x_1^2+\dotsb+x_n^2$, and consider $Q_n^k$, a form of degree $d = 2k$.
Reznick showed every form of degree $k$ in $n$ variables is a derivative of $Q_n^k$, see \cite[Theorem 3.10]{MR1347159}.
This can be used to show that $\rank Q_n^k \geq \binom{n-1+k}{n-1}$, see for example \cite[Theorem 5.3C,D]{MR1735271}.
(See \cite[Theorem 8.15(ii)]{MR1096187} for the real case.)
Sometimes equality holds, see \cite[Chapters 8, 9]{MR1096187}.
For example, Reznick uses the Leech lattice in $\mathbb{R}^{24}$ to show that
\[
  \rank((x_1^2+\dotsb+x_{24}^2)^{5}) = 98\,280 = \binom{28}{5} .
\]
Note that $g_{24,10} = 3\,856\,710$.

Reznick gives an expression \cite[(10.35)]{MR1096187}:
\[
  (x_1^2+\dotsb+x_n^2)^2 = \frac{1}{6} \sum_{i<j} (x_i \pm x_j)^4 + \frac{4-n}{3} \sum_{i=1}^n x_i^4,
\]
thus $\rank(Q_n^2) \leq n^2$,
so for sufficiently large $n$, $g_{n,4} = O(n^3) \gg \rank(Q_n^2)$.
For small $n$, Reznick shows
that $g_{n,4} \leq \rank(Q_n^2) \leq g_{n,4}+1$ for $n=3,4,5,6$.

There is a similar identity:
\begin{multline*}
  60 (x_1^2+\dotsb+x_n^2)^3 = \sum_{i<j<k} (x_i \pm x_j \pm x_k)^6 \\
    + 2(5-n) \sum_{i<j} (x_i \pm x_j)^6 + 2(n^2-9n+38) \sum x_i^6,
\end{multline*}
so $\rank(Q_n^3) \leq 4 \binom{n}{3} + 2 \binom{n}{2} + n$.
Hence $g_{n,6} = O(n^5) \gg \rank(Q_n^3)$ for sufficiently large $n$.
%%% Look at
%%% Dickson, History of the Theory of Numbers, vol II, 1919
%%% pg 720-724
%%% for more identities
%%% Also:
%%% Reznick, Sums of even powers of real linear forms
%%% pg 103

It would be interesting to determine $\rank(Q_n^k)$,
in particular to determine whether the rank is greater than the generic rank,
and whether it is strictly less than the maximal rank.

\subsection{Orbits of homogeneous forms}\label{sect: orbits of homogeneous forms}

When $F$ defines a smooth hypersurface in $\PP V$ of degree $\deg F \geq 3$,
the stabilizer of $F$ in $\SL(V)$ is finite, see for example \cite[(2.1)]{MR0476735}.
In particular the projective orbit $\GL(V) \cdot [F] \subset \PP(S^d V)$ has dimension $n^2-1$ where $n = \dim V$.
In this section we establish a lower bound for the dimension of $\GL(V) \cdot [F]$, assuming only that $F$ is concise.

\begin{lemma}\label{lem_general_section_is_a_cone}
   Assume $Y\subset \PP^N$ is a reduced subscheme such that every irreducible component of $Y$ has dimension at least $1$.
   Suppose a general hyperplane section $Y\cap \PP^{N-1}$ is a cone. Then $Y$ is a cone with $\dim \Vertex(Y)\ge 1$.
\end{lemma}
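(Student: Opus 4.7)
The plan is to construct a positive-dimensional subvariety $V \subseteq Y$ from the incidence between hyperplanes and the vertices of their sections of $Y$, and then to show that every point of $V$ is a vertex of $Y$ itself. We may assume $Y \subsetneq \PP^N$, as the other case is trivial. Let $U \subseteq (\PP^N)^*$ be the dense open subset of hyperplanes $H$ such that $Y \cap H$ is a cone, and let $d \ge 0$ be the dimension of $\Vertex(Y\cap H)$ for a general $H \in U$. Consider the incidence
\[
  I := \{(H,p) \in (\PP^N)^* \times \PP^N : p \in \Vertex(Y \cap H)\},
\]
which is closed since the set-theoretic condition $p + (Y \cap H) \subseteq Y \cap H$ is closed in $(H,p)$. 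The first projection $\pi_1\colon I \to (\PP^N)^*$ is dominant, with general fiber $\Vertex(Y \cap H)$ of dimension $d$, so $I$ has an irreducible component of dimension $N+d$ dominating $(\PP^N)^*$; replace $I$ by this component. Letting $\pi_2$ denote the second projection and setting $V := \pi_2(I)$, we have $V \subseteq Y$ since $\Vertex(Y \cap H) \subseteq Y \cap H \subseteq Y$.

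Now I would perform a dimension count from both sides. Every fiber of $\pi_2$ lies in the $\PP^{N-1}$ of hyperplanes through the image point, so $\dim V \ge \dim I - (N-1) = d+1 \ge 1$. Conversely, $V \subseteq Y \subsetneq \PP^N$ gives $\dim V \le N-1$, hence for a general $p \in V$ the fiber $G_p := \pi_2^{-1}(p) \subseteq \PP^{N-1}$ has dimension at least $\dim I - \dim V \ge d+1 \ge 1$. For any $y \in \PP^N$, the hyperplanes through both $p$ and $y$ form a $\PP^{N-2} \subseteq \PP^{N-1}$, and the projective intersection bound $\dim G_p + \dim \PP^{N-2} \ge 1 + (N-2) = \dim \PP^{N-1}$ forces $G_p \cap \PP^{N-2} \neq \emptyset$. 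Picking any $H$ in this intersection, $p \in \Vertex(Y \cap H)$ and $y \in H$; so if $y \in Y$, then $\overline{py} \subseteq Y \cap H \subseteq Y$. Letting $y$ range over $Y$ gives $p + Y \subseteq Y$, i.e., $p \in \Vertex(Y)$. Since $\Vertex(Y)$ is closed (being a linear space), this containment propagates from the general $p \in V$ to all of $V$, yielding $V \subseteq \Vertex(Y)$ and $\dim \Vertex(Y) \ge \dim V \ge 1$.

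The main obstacle I expect is the verification that the incidence $I$ is genuinely closed in $(\PP^N)^* \times \PP^N$, so that its fiber dimensions can be used rigorously; this reduces to showing that $p \in \Vertex(Y\cap H)$ is a closed condition in $(H,p)$, which follows from a relative version of the standard fact that $\Vertex(W)$ is a linear space, applied to the universal family of hyperplane sections of $Y$ over $U$. A minor secondary point is that the hypothesis on component dimensions of $Y$ ensures $Y \cap H \neq \emptyset$ for every $H$, so that $U$ is nonempty and $d$ is well defined.
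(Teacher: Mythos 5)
Your proof is correct, and the overall architecture — a vertex-incidence variety over the dual projective space, a dimension count showing that a generic candidate vertex $p$ admits a positive-dimensional family $G_p$ of hyperplanes with $p \in \Vertex(Y \cap H)$, and the conclusion that $p$ is a vertex of $Y$ itself — matches the paper's proof. The genuine difference is in the last step and in the handling of reducibility. The paper first reduces to $Y$ irreducible and nondegenerate (and then treats reducible $Y$ separately at the end, arguing that each component is a cone and a general vertex is a vertex of each), and it concludes by asserting that a \emph{general} point of $Y$ lies on some $Y\cap H$ with $H$ in the positive-dimensional family. You instead invoke the projective intersection theorem inside the $\PP^{N-1}$ of hyperplanes through $p$: since $\dim G_p \ge 1$ and the hyperplanes through $p$ and any fixed $y$ form a $\PP^{N-2}$, the intersection $G_p \cap \PP^{N-2}$ is nonempty. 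This gives the covering statement for \emph{every} $y\in\PP^N$ rather than a general one, which is exactly what lets you skip the irreducible/nondegenerate reduction and the separate reducible case. That is a cleaner route: one uniform argument instead of a reduction plus a patch. Two small points worth tightening in a final write-up: (i) the closedness of $I$ deserves a sentence — it follows from the universal quantifier ``for all $q\in Y\cap H$, the line $\overline{pq}\subseteq Y\cap H$'' over the proper family of hyperplane sections, so the image under the proper projection is closed; (ii) the constancy of $d = \dim\Vertex(Y\cap H)$ on a dense open set should be noted as a standard constructibility/stratification fact, which justifies taking the irreducible component of $I$ of dimension $N+d$.
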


\begin{proof}
   If $Y=\PP^N$, then there is nothing to prove, so suppose $\dim Y < N$.
  First assume that $Y$ is irreducible.
  Replacing $\PP^N$ with a subspace if necessary, we may assume that $Y$ is nondegenerate.
   Consider the vertex-incidence subvariety $Z \subset Y \times (\PP^N)^*$, defined by:
   \[
     Z = \set{(y, H) \in Y \times (\PP^N)^* \mid y \in \Vertex(Y \cap H)}
   \]
   with its natural projections $\pr_1 : Z \to Y$ and $\pr_2 : Z \to (\PP^N)^*$.
   By our assumptions $\pr_2$ is dominant,
   so $\dim Z \ge N > \dim Y$.
   Let $W$ be the image of $\pr_1$.
   In particular, by dimension count, for a general point $w\in W$, there is a positive dimensional fiber $\pr_1^{-1}(w) \subset Z$.
   Let $Z_w = \pr_2(\pr_1^{-1}(w))$, so that $Z_w$
      is a positive dimensional family of hyperplanes $H$
      such that $w$ is a vertex of the cone $Y\cap H$.
   
   Since $Y$ is irreducible and nondegenerate,
      a general point $y$ in $Y$ is contained in some $Y\cap H$ with $H \in Z_w$.
   Then the line through $y$ and $w$ is contained in $Y \cap H$.
   Hence $w+Y = Y$, so $w \in \Vertex(Y)$.
   Therefore $Y$ is a cone, $W \subset \Vertex(Y)$, and $\dim \Vertex(Y) \ge \dim W$.
   But $\dim W>0$, because every general hyperplane contains a point of $W$.
   
   Now if $Y$ is reducible then by the above, each irreducible component is a cone.
   A vertex $w$ of a general hyperplane section $Y \cap H$ is a vertex of each component
   and the result follows.
\end{proof}

\begin{lemma}\label{lem_general_substitution_of_variables}
   Assume $V$ is a vector space and $n=\dim V \ge 3$, $d\ge 2$.
   Suppose $H_d \subset \PP V^* \simeq \PP^{n-1}$ is a (not necessarily reduced) 
      hypersurface of degree $d$, which is not a cone. 
   Then a general hyperplane section $H_d \cap \PP^{n-2}$ is not a cone.

   Equivalently, suppose $F\in \PP(S^d V)$ is a concise polynomial (in $n$ variables $\fromto{x_1}{x_n}$, of degree $d \ge 2$).
   Pick a general linear substitution of variables, say $x_n = a_1 x_1 + \dotsb + a_{n-1} x_{n-1}$.
   Let $F'(\fromto{x_1}{x_{n-1}})=F(\fromto{x_1}{x_{n-1}}, a_1 x_1 + \dotsb + a_{n-1} x_{n-1})$.
   Then $F'$ essentially depends on $n-1$ variables and no fewer.
\end{lemma}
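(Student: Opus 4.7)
The plan is to reduce to the reduced case and then invoke Lemma \ref{lem_general_section_is_a_cone} in its contrapositive form. First I would observe that the two formulations are equivalent: substituting $x_n = a_1 x_1 + \cdots + a_{n-1} x_{n-1}$ realizes the hypersurface $V(F') \subset \PP^{n-2}$ as the hyperplane section of $V(F)$ cut out by this relation, and $F'$ essentially depends on $n-1$ variables precisely when $V(F')$ is not a cone (see \S\ref{sect: concise forms}); letting the coefficients $a_i$ vary amounts to choosing a general hyperplane. So it suffices to prove the first form of the statement.

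Next, set $Y := (H_d)_{\mathrm{red}}$. The key observation is that the vertex of a subscheme is defined set-theoretically, so $\Vertex(H_d) = \Vertex(Y)$, and the hypothesis that $H_d$ is not a cone translates to $\Vertex(Y) = \emptyset$. Similarly, for a general hyperplane $H$, Bertini's theorem guarantees that $Y \cap H$ is reduced; since $H_d$ and $Y$ have the same support, $(H_d \cap H)_{\mathrm{red}} = Y \cap H$, and hence $H_d \cap H$ is a cone if and only if $Y \cap H$ is. It is therefore enough to show that a general hyperplane section of $Y$ is not a cone.

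For this I would apply Lemma \ref{lem_general_section_is_a_cone} to $Y$ in its contrapositive form. The hypotheses of that lemma hold: $Y$ is reduced by construction, and each irreducible component has dimension $n-2 \ge 1$ because $n \ge 3$. Since $\Vertex(Y) = \emptyset$ we have $\dim \Vertex(Y) < 1$, and the contrapositive of Lemma \ref{lem_general_section_is_a_cone} delivers exactly the conclusion that a general hyperplane section of $Y$ is not a cone.

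The only genuinely delicate point in this plan is the possible non-reduced structure on $H_d$, which prevents a direct appeal to Lemma \ref{lem_general_section_is_a_cone}. This is handled entirely by the reduction to $Y = (H_d)_{\mathrm{red}}$: both the property of being a cone and the property of having a general hyperplane section that is a cone depend only on the underlying set-theoretic support, so passing from $H_d$ to $Y$ loses no information, and the lemma then applies cleanly.
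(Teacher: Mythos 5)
Your proof is correct and follows the same strategy as the paper: reduce to $Y=(H_d)_{\mathrm{red}}$, check the dimension hypothesis ($n-2\ge 1$), and apply Lemma~\ref{lem_general_section_is_a_cone} in contrapositive form. The paper compresses all of this into a single sentence recording the key observation that a hypersurface is a cone if and only if its reduction is; you have filled in the same details. One small remark: the appeal to Bertini (to make $Y\cap H$ reduced and identify it with $(H_d\cap H)_{\mathrm{red}}$) is not actually needed — since the vertex of a closed subscheme is a set-theoretic notion, $H_d\cap H$ is a cone if and only if its support is, and that support coincides with the support of $Y\cap H$ for every hyperplane $H$, not just general ones. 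This is a harmless redundancy rather than an error.
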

\begin{proof}
   This follows from Lemma~\ref{lem_general_section_is_a_cone}, 
      since a hypersurface is a cone if and only if its reduced subscheme is a cone.
\end{proof}

Now we turn our attention to $\GL(V)$-orbits.

\begin{lemma}\label{lem_dimension_of_nonconcise_orbit}
   Let $F\in \PP(S^d V)$ be a polynomial in $n= \dim V$ variables, which essentially depends on $k$ variables with $0 < k < n$ (i.e., $F$ is non-trivial and non-concise).
   Then $F$ determines uniquely the linear subspace $V' \subset V$ of dimension $k$ such that $F \in \PP(S^d V') \subset \PP(S^dV)$.
   In particular, 
   \[
     \dim (\GL(V) \cdot [F]) = \dim (\GL(V') \cdot [F]) + \dim \Gr(k, V).
   \]
\end{lemma}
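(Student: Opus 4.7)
The plan is to use the essential variables construction from Section~\ref{sect: concise forms} to pin down $V'$, and then fiber the orbit over the Grassmannian.

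First I would establish uniqueness of $V'$. Let $\langle F\rangle \subseteq V$ denote the span of the $(d-1)$-th derivatives of $F$. By the discussion in \S\ref{sect: concise forms}, $F \in S^d\langle F\rangle$ and $\dim\langle F\rangle$ equals the number of essential variables, which by hypothesis is $k$. If $V'' \subseteq V$ is any subspace with $F \in S^d V''$, then every $(d-1)$-th derivative of $F$ again lies in $V''$, so $\langle F\rangle \subseteq V''$. When $\dim V'' = k$ this forces $V'' = \langle F\rangle$. So the subspace $V' := \langle F\rangle$ is the unique $k$-dimensional subspace with $F \in S^dV'$.

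Next I would build the map intrinsically. For every $g \in \GL(V)$ one has $\langle g\cdot F\rangle = g(\langle F\rangle)$, because differentiation transforms equivariantly under $\GL(V)$. Hence there is a well-defined $\GL(V)$-equivariant morphism
\[
  \phi\colon \GL(V)\cdot[F] \longrightarrow \Gr(k,V),\qquad [g\cdot F]\longmapsto g(V').
\]
Since $\GL(V)$ acts transitively on $\Gr(k,V)$, the map $\phi$ is surjective and all its fibers are isomorphic.

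Then I would compute the fiber over $V'$. It consists of $[h\cdot F]$ for $h$ in the parabolic stabilizer
\[
  P = \{h \in \GL(V) : h(V') = V'\}.
\]
Choosing a complement $V = V'\oplus V''$, every element of $P$ has block form $\left(\begin{smallmatrix}A & B\\ 0 & D\end{smallmatrix}\right)$ with $A \in \GL(V')$, and the action of such an $h$ on $F \in S^dV'$ coincides with that of $A$. In particular $P$ acts on $[F]$ through its Levi quotient $\GL(V')$, so the fiber equals the orbit $\GL(V')\cdot [F]$ computed inside $\PP(S^dV')$. Combining surjectivity of $\phi$ with the fiber dimension gives
\[
  \dim\bigl(\GL(V)\cdot[F]\bigr) = \dim\bigl(\GL(V')\cdot[F]\bigr) + \dim\Gr(k,V),
\]
which is the desired identity.

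The only subtle step is the second one, namely the equivariance identity $\langle g\cdot F\rangle = g(\langle F\rangle)$ and the fact that the unipotent radical of $P$ acts trivially on $F$. The rest is a transitivity/stabilizer dimension count; once the map $\phi$ is in place the formula drops out. I do not expect any serious obstacle, only the care needed to verify that the $\GL(V)$-action on $S^dV$ intertwines correctly with the apolarity action used to define $\langle F\rangle$.
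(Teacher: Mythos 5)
Your proof is correct and follows essentially the same approach as the paper: realize $V'$ as the span $\langle F\rangle$ of the $(d-1)$-th derivatives, and fiber the orbit over $\Gr(k,V)$ via the equivariant map $[g\cdot F]\mapsto g\langle F\rangle$, with fiber $\GL(V')\cdot[F]$. You merely spell out the equivariance, surjectivity, and the parabolic-stabilizer step that the paper leaves implicit.
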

\begin{proof}
   We have $V' = \langle F \rangle = ((F^{\perp})_1)^{\perp}$ or $S^{d-1}V^* \hook F$,
   as in \S\ref{sect: concise forms}.
   The fibration 
   \begin{align*}
      \GL(V) \cdot [F]  &\to \Gr(k, V)\\
      [F'] = [g \cdot F]  &\mapsto \langle F' \rangle  = g \cdot \langle F \rangle
   \end{align*}
   is onto, 
     and each fiber is isomorphic to $\GL(V') \cdot [F]$, proving the dimension claim.
\end{proof}

\begin{proposition}\label{prop_dimension_of_orbit_of_a_concise_form}
   Suppose $F\in \PP(S^d V)$ is a concise polynomial in $n = \dim V \ge 3$ variables. 
   Let $Z = \overline{\GL(V) \cdot [F]} \subset \PP (S^d V)$.
   Then either $\dim Z \ge \binom{n+1}{2}$, or 
   $\dim Z =  \binom{n+1}{2} -1$, $d = 2k$ is even, and $F = Q^k$ for a concise quadratic polynomial $Q$.
\end{proposition}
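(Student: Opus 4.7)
The plan is to reduce the problem to a Lie-algebra computation and then induct on $n$. Since $\mathrm{id}\cdot F = dF$, we have $\kk F\subset \mathfrak{gl}(V)\cdot F = V\cdot \partial F$, where $\partial F\subset S^{d-1}V$ is the span of the first partials of $F$. Hence $\dim Z = \dim(V\cdot \partial F)-1 = n^2 - 1 - \dim \mathfrak{a}$, where $\mathfrak{a} := \{X\in \mathfrak{gl}(V): X\cdot F = 0\}$ is the Lie annihilator of $F$. The proposition is then equivalent to the sharp bound $\dim \mathfrak{a}\le \binom{n}{2}$, with equality only when $F = cQ^k$ for a nondegenerate quadratic $Q$ (forcing $d = 2k$). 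Indeed, for $F = Q^k$ the stabilizer of $[F]$ in $\GL(V)$ is the conformal orthogonal group $\kk^*\cdot \mathrm{O}(Q)$, of Lie algebra dimension $\binom{n}{2}+1$, matching the exceptional value $\dim Z = \binom{n+1}{2}-1$.

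I would prove the bound by induction on $n\ge 3$. For the inductive step, fix a general hyperplane $V'\subset V$ and a splitting $V = V'\oplus \kk x_n$; by Lemma~\ref{lem_general_substitution_of_variables} the restriction $F' := F|_{V'}$ is concise in $V'$. Expand $F = F'+x_n F_1 + x_n^2 F_2+\dotsb$ with $F_k\in S^{d-k}V'$, and write any $X\in\mathfrak{a}$ in block form with components $A\in\mathfrak{gl}(V')$, $b\in V'$, $c\in V'$, $\delta\in\kk$. Expanding $X\cdot F = 0$ in powers of $x_n$ yields a cascade of equations; its $k=0$ component reads $A\cdot F' + b F_1 = 0$, forcing $A$ into the preimage of $V'\cdot F_1\subset S^d V'$ under the action map $M\colon \mathfrak{gl}(V')\to S^d V'$, $A\mapsto A\cdot F'$. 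Thus
\[
  \dim \im(\mathfrak{a}\to \mathfrak{gl}(V'))\le \dim \Ann(F') + \dim(V'\cdot F_1)\le \binom{n-1}{2}+(n-1) = \binom{n}{2}
\]
by induction. For a sufficiently general $V'$ one has $F_1\neq 0$ and $F_1\notin \partial F'$; the $k=0$ and $k=1$ equations then force the kernel of the projection $\mathfrak{a}\to \mathfrak{gl}(V')$ to be zero, yielding $\dim\mathfrak{a}\le \binom{n}{2}$. The base case $n = 3$ uses the identical argument, with the inductive hypothesis on $\Ann(F')$ replaced by the direct bound $\dim \Ann(F')\le 1$ for concise binary $F'\in\kk[x,y]$, proved by a short linear-syzygy calculation.

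For the equality case $\dim \mathfrak{a} = \binom{n}{2}$, the inequalities above must all saturate, so for every general $V'$ we need $\dim \Ann(F|_{V'}) = \binom{n-1}{2}$. When $n-1\ge 3$, induction characterizes this as $F|_{V'} = (Q'_{V'})^k$ for a nondegenerate quadratic on $V'$; as $V'$ varies, these restrictions are compatible and glue to a single nondegenerate quadratic $Q$ on $V$, giving $F = cQ^k$. In the base case $n = 3$, the equality analysis for binary forms is more delicate, since $\dim \Ann(F') = 1$ admits both $F' = (Q')^k$ and the degenerate product $F' = c\ell^{d-1}m$; additional work is required to rule out the latter for generic $V'$.

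The main technical obstacle is this last step in the base case. I expect to handle it by observing that if $F|_{V'}$ had a $(d-1)$-fold linear factor for every $V'$ in an open set, then $F$ itself would possess a point of multiplicity $d-1$ contained in every general hyperplane, forcing $F$ to essentially depend on only two variables and contradicting conciseness in three variables.
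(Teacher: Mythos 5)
Your approach is genuinely different from the paper's. The paper bounds $\dim Z$ from below by producing non-concise boundary points of the orbit closure ($\overline{\GL(V)\cdot[F]} \supseteq \End(V)\cdot[F] \ni [F_{n-1}], \dots, [F_3]$) and invoking Lemma~\ref{lem_dimension_of_nonconcise_orbit} to strip off one variable at a time, reducing to an explicit analysis of the $n=3$ base case via $(F_3)_{\mathrm{red}}$. You instead translate the problem into an upper bound $\dim\mathfrak a \le \binom{n}{2}$ on the Lie-algebra annihilator and try to establish it by a block decomposition relative to a general hyperplane $V'\subset V$. The translation $\dim Z = n^2-1-\dim\mathfrak a$ is correct, and the image bound $\dim \im(\mathfrak a \to \mathfrak{gl}(V')) \le \dim\Ann(F') + \dim(V'\cdot F_1) \le \binom{n-1}{2}+(n-1)$ is sound given the binary base case $\dim\Ann(F')\le 1$, which holds.

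The genuine gap is the unproved claim that for sufficiently general $V'$ one has $F_1 \notin \partial F'$, on which the ``kernel is zero'' step rests. This claim is \emph{false} precisely in the extremal case. If $F = Q^k$ with $Q$ nondegenerate and $B$ its polarization, then for general $V'$ and complementary direction $v$,
\[
  F_1 = (\partial_v Q^k)\big|_{V'} = 2k\,(Q|_{V'})^{k-1}\,B(v,-)\big|_{V'},
\]
while
\[
  \partial F' = \set{\,2k\,(Q|_{V'})^{k-1}\,B'(w,-) : w \in V'\,} = (Q|_{V'})^{k-1}\cdot (V')^*
\]
because $B' = B|_{V'}$ is nondegenerate for general $V'$. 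Hence $F_1 \in \partial F'$ (and $F_1 \neq 0$). So your argument for $\ker(\mathfrak a \to \mathfrak{gl}(V'))=0$ does not run in exactly the case that controls the equality characterization. To repair it you would need either to analyze the higher-order $x_n^k$ equations to kill the kernel even when $F_1 \in \partial F'$, or to prove the dichotomy ``$F_1 \notin \partial F'$ for general $V'$, or $F = Q^k$'' outright --- but the latter is essentially the proposition shifted by one step, so care is needed to avoid circularity. Separately, the ``gluing'' of the quadratics $Q'_{V'}$ over varying hyperplanes into a single $Q$ on $V$ is asserted without argument, and you already flag the $n=3$ base case as needing more work. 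As written, these are substantive holes, not omitted routine details.
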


\begin{proof}
   Let $F_n=F$, $V_n=V$,
   and define inductively $F_i$ to be a polynomial in $i$ variables (a basis of $V_i$) obtained from $F_{i+1}$ by a general substitution of one variable, 
      as in Lemma~\ref{lem_general_substitution_of_variables}.
   Thus $F_i$ is a polynomial essentially dependent on $i$ variables.
   
   The closure of the orbit $ \overline{\GL(V_n) \cdot [F_n]}$ contains $\End(V_n) \cdot [F_n]$.
   In particular, the closure contains a general substitution of variables, i.e.~it contains $[F_i]$ for all $i \le n$.
   But $\GL(V_n) \cdot [F_n]$ does not contain $[F_i]$ for $i <n$.
   Thus 
   \begin{multline*}
     \dim \overline{\GL(V_n) \cdot [F_n]} \ge \dim \overline{\GL(V_n) \cdot [F_{n-1}]} +1 \\
      = \dim (\GL(V_{n-1}) \cdot [F_{n-1}]) + (n-1) +1,
   \end{multline*}
   by Lemma \ref{lem_dimension_of_nonconcise_orbit}.
   Inductively,
   \begin{multline*}
     \dim \overline{\GL(V_n) \cdot [F_n]} \ge n + (n-1) + \dotsb +5 +4 + \dim (\GL(V_{3}) \cdot [F_{3}]) \\
     = \binom{n+1}{2} -6 + \dim (\GL(V_{3}) \cdot [F_{3}]).
   \end{multline*}
   Note that if $F_3 = Q_3^k$ for some quadric in three variables $Q_3$, then by the generality of our choices of linear substitutions (or equivalently, of hyperplane sections of the loci $(F_i=0)$), 
      we also must have $F=F_n=Q^k$.
   Thus it only remains to show the claim of the proposition for $n=3$.
   
   Denote by $(F_3)_{\text{red}}\in \PP (S^{r} V_3)$ the homogeneous equation of the reduced algebraic set $(F_3=0) \subset \PP (V_3^*) \simeq \PP^2$.
   Observe that $(F_3)_{\text{red}}$ essentially depends on $3$ variables, just as $F_3$ does.
   In particular, $r = \deg (F_3)_{\text{red}} \ge 2$, and if $r=2$, then $(F_3)_{\text{red}}$ is a nondegenerate (irreducible) quadric $Q_3$, and hence $F = Q^k$ and the claim of the proposition is proved.
   From now on, we assume $r\ge 3$.

   Consider the general line section of the plane curve $(F_3=0)$. 
   The degree $r=\deg (F_3)_{\text{red}}$ is the number of distinct points of support of this line section.
   By Lemma~\ref{lem_dimension_of_nonconcise_orbit} again:
   \[
     \dim \overline{\GL(V_3) \cdot [F_3]} \ge \overline{\GL(V_3) \cdot [F_{2}]} +1 = \dim (\GL(V_{2}) \cdot [F_{2}]) + 3
   \]  
   and also $\dim (\GL(V_{2})\cdot [F_2]) = 3$ since $F_2$ has $r$ (at least $3$) distinct roots.
   Therefore $\dim \overline{\GL(V_3) \cdot [F_3]} \ge 6$ and  $\dim \overline{\GL(V_n) \cdot [F_n]} \ge \binom{n+1}{2}$ as claimed.
\end{proof}

\subsection{Conciseness of forms of high rank}

We will use the following lemma in the next section.
\begin{lemma}\label{lem_form_of_max_rank_is_concise}
   Suppose $F \in \PP(S^d V)$ is a form of maximal rank and $d\ge 2$. Then $F$ is concise.
   In particular, a general point in each component of $W_m$ is a concise form.
\end{lemma}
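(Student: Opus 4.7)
Plan: The approach is by contradiction. Suppose $F\in\PP(S^d V)$ has rank $m$ (the maximum) but is not concise, and let $V':=\langle F\rangle\subsetneq V$ be its essential variable space, of dimension $n'<n$.

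First I would establish that the rank of $F$ does not depend on the ambient space. Given any decomposition $F=\sum_{i=1}^r \ell_i^d$ with $\ell_i\in V$, fix a direct sum decomposition $V=V'\oplus V''$ and split $\ell_i=\ell_i'+\ell_i''$. Expanding
\[
\ell_i^d=\sum_{k=0}^d\binom{d}{k}(\ell_i')^{d-k}(\ell_i'')^k
\]
and using that $F\in S^d V'$ kills every bi-degree component with positive $V''$-degree, one reads off $F=\sum_i(\ell_i')^d$ with at most $r$ nonzero summands. Hence $\rank_V(F)=\rank_{V'}(F)$, so writing $m_{n',d}$ for the maximum rank with respect to $\nu_d(\PP V')$, we have $m\le m_{n',d}$. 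The reverse inequality $m_{n,d}\ge m_{n',d}$ is immediate (any form in $V'$ is also a form in $V$), so $m=m_{n',d}$.

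The crux is to build a form in $S^d V$ of rank exceeding $m$, contradicting maximality. Pick $\ell\in V\setminus V'$ and set $G:=F+\ell^d$. Suppose for contradiction $\rank_V(G)\le m$. Then $G=\sum_{i=1}^m L_i^d$, and writing $L_i=\ell_i'+c_i\ell$ with $\ell_i'\in V'$, $c_i\in\kk$, and matching the coefficients of $\ell^k$ for $0\le k\le d$, one obtains
\[
\sum_i(\ell_i')^d=F,\quad \sum_i c_i^k(\ell_i')^{d-k}=0\ \text{for}\ 1\le k\le d-1,\quad \sum_i c_i^d=1.
\]
Since $\rank(F)=m$, the first identity forces all $\ell_i'\ne 0$ and exhibits the $(\ell_i')$ as a (necessarily rigid) rank-$m$ decomposition of $F$ in $V'$.

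The main obstacle is to rule out the existence of such constrained decompositions. The strategy is a parameter count: the vanishing conditions $\sum_i c_i^k(\ell_i')^{d-k}=0$ together with $\sum_i c_i^d=1$ live in a vector space of total dimension $\binom{n'+d-1}{d-1}$, whereas the only free parameters (after fixing the rigid $\ell_i'$) are the $m=m_{n',d}$ scalars $c_i$, and $m_{n',d}$ is strictly smaller than $\binom{n'+d-1}{d-1}$. To turn this overdeterminacy into an actual non-existence statement, I would focus on the leading relation $\sum_i c_i^{d-1}\ell_i'=0$ in $V'$: a nontrivial solution (required since $\sum c_i^d=1$) would force a linear dependence among the $\ell_i'$ incompatible with $F$ being of genuinely maximum rank in $V'$, and one can amplify this with the remaining relations $\sum c_i^k(\ell_i')^{d-k}=0$ via an apolarity translation. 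This is the hard technical step. Once the contradiction is in hand, $\rank_V(G)\ge m+1$, violating the maximality of $m$. The `In particular' clause follows immediately: each irreducible component $W$ of $W_m=\overline{\{\rank=m\}}$ must meet the open locus $\{\rank=m\}$, so a general point of $W$ has rank $m$ and is concise by the main statement.
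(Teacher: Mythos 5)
Your reduction is the right framing --- you correctly observe that it suffices to show $\rank(F+\ell^d) > \rank(F)$ for $\ell \notin V' = \langle F\rangle$, and the preliminary steps ($\rank_V(F) = \rank_{V'}(F)$ by projecting a decomposition, all $\ell_i' \neq 0$, and the ``in particular'' clause) are sound. But the step you yourself flag as ``the hard technical step'' is a genuine gap, and the heuristic you offer for closing it does not work. The nontriviality of the relation $\sum_i c_i^{d-1}\ell_i' = 0$ does \emph{not} contradict $\rank_{V'}(F) = m$: typically $m > n' = \dim V'$, so the vectors $\ell_i'$ are automatically linearly dependent and no contradiction follows from a linear dependence alone. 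The asserted ``rigidity'' of the decomposition $\set{\ell_i'}$ is likewise unjustified and false in general (a form of maximal rank can have many rank-$m$ decompositions). A raw parameter count comparing $m$ scalars $c_i$ to $\binom{n'+d-1}{d-1}$ constraints also cannot finish the job, because the constraint equations are not independent and the decomposition of $F$ itself is an additional unknown.

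What you are attempting to prove is precisely Proposition 3.1 of Carlini--Catalisano--Chiantini \cite{MR3320211}: if $F$ essentially depends on fewer than $n$ variables, then $\rank(F + x_n^d) = \rank(F) + 1$. The paper's proof of the lemma is a one-line application of that result. That proposition is not elementary; its proof goes through apolarity and a careful analysis of the apolar ideal, and your sketch does not reconstruct it. To complete your argument you should either cite that result directly or supply its proof in full; the outline as written leaves the essential inequality $\rank(F+\ell^d) > \rank(F)$ unproved.
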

\begin{proof}
   Suppose on the contrary, that there exists a choice of variables $\fromto{x_1}{x_n}$ in $V$ (where $\dim V = n$), such that $F = F(\fromto{x_1}{x_{n-1}})$ and $\rank(F) = m = m_{n,d}$.
   Then by  \cite[Proposition 3.1]{MR3320211} $\rank(F + x_n^d)  = \rank(F) + 1 > m$, a contradiction.
\end{proof}

We can generalize the above lemma
to show that all forms of greater than generic rank are necessarily concise under certain conditions.
For this we use the following simplified bound for the maximal rank.
\begin{lemma}
   $m_{n,d} \le \left\lceil\frac{2 (d+n-1)!}{n! \cdot d!}\right\rceil$.
\end{lemma}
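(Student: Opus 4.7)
The plan is to combine Theorem~\ref{thm_dimension_bound_for_homogeneous} with the Alexander--Hirschowitz formula for $g_{n,d}$, and then reduce the claim to a short arithmetic check. Set $N=\binom{d+n-1}{d}$, so that the desired inequality reads $m_{n,d}\le\lceil 2N/n\rceil$.

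First I would dispose of the small cases where Alexander--Hirschowitz does not apply. If $d=1$ then $m=g=1\le 2\le\lceil 2N/n\rceil$. If $d=2$ then by diagonalization $g=m=n$ and $N=\binom{n+1}{2}$, so $\lceil 2N/n\rceil=n+1\ge m$. If $n=2$ and $d\ge 3$ then $m=d$ and $N=d+1$, so again $\lceil 2N/n\rceil=d+1\ge m$ by the Sylvester description reviewed in \S\ref{sect_binary_forms}. In each of these small cases the bound is immediate.

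For the main range $n,d\ge 3$, the Veronese variety $X=\nu_d(\PP^{n-1})$ is the unique closed $\GL(V)$-orbit in $\PP(S^d V)$, hence a projective homogeneous variety, so Theorem~\ref{thm_dimension_bound_for_homogeneous} gives $m\le 2g-1$, sharpened to $m\le 2g-2$ whenever $\sigma_{g-1}(X)$ is a hypersurface. Now apply Alexander--Hirschowitz. In the non-exceptional cases one has $g=\lceil N/n\rceil$; writing $N=nq+r$ with $0\le r<n$, the arithmetic inequality $2\lceil N/n\rceil-1\le\lceil 2N/n\rceil$ holds by splitting on whether $r=0$, $0<r\le n/2$, or $r>n/2$, since in each subcase the two sides either agree or the left side is strictly smaller. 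In the four exceptional cases $(n,d)\in\{(3,4),(4,4),(5,3),(5,4)\}$, the Alexander--Hirschowitz statement quoted in the excerpt tells us that $\sigma_{g-1}(X)$ \emph{is} a hypersurface, so the stronger bound $m\le 2g-2$ is available; a direct check gives equality $2g-2=\lceil 2N/n\rceil$ in each of these four cases (with common values $10,18,14,28$ respectively).

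The only step requiring any care is the arithmetic inequality $2\lceil N/n\rceil-1\le\lceil 2N/n\rceil$, which is false in general but holds in the integer form we need. The fact that the four Alexander--Hirschowitz exceptional cases are exactly those where $\sigma_{g-1}(X)$ is a hypersurface is what allows the bound to hold tightly there; without the improvement $m\le 2g-2$, the stated inequality would fail in each of those four cases. I foresee no other obstacle.
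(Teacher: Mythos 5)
Your proof is correct and follows the same approach as the paper: combine Theorem~\ref{thm_dimension_bound_for_homogeneous} (using the hypersurface version $m\le 2g-2$ in the four exceptional cases, which are indeed hypersurface cases by Alexander--Hirschowitz) with the formula for $g_{n,d}$, then verify the resulting arithmetic inequalities; you are more explicit than the paper in carrying out the arithmetic and in handling the small cases $d\le 2$ and $n=2$, which the paper leaves implicit. One small slip in your final remark: the inequality $2\lceil x\rceil - 1\le\lceil 2x\rceil$ is in fact true for \emph{all} real $x$ (since $2\lceil x\rceil - 1 < 2x+1\le\lceil 2x\rceil+1$ and both sides are integers); what actually fails in the four exceptional cases is that $g\ne\lceil N/n\rceil$ there, so $2g-1$ exceeds $\lceil 2N/n\rceil$, which is precisely why the sharper bound $m\le 2g-2$ is needed.
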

\begin{proof}
   In the exceptional cases $(n,d) \in \set{(3,4), (4,4), (5,3), (5,4)}$, use the hypersurface version of Theorem~\ref{thm_dimension_bound_for_homogeneous}
   and check that $2 g_{n,d} - 2$ is less than or equal to the right hand side.
   In the nonexceptional cases, use the general version of Theorem~\ref{thm_dimension_bound_for_homogeneous}
   and check that $2 g_{n,d} - 1$ is less than or equal to the right hand side.
\end{proof}

\begin{proposition}
Let $n = \dim V \geq 2$.
Suppose that $d$ satisfies the following:
if $n=2$ or $n=3$, then $d \geq 2$; otherwise, $d \geq n+1$.
Let $F$ be a form of degree $d$ in $n$ variables with greater than generic rank.
Then $F$ is concise.
\end{proposition}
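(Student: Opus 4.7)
The plan is to argue the contrapositive: if $F$ is non-concise, then $\rank(F) \le g_{n,d}$. Assume $F$ essentially depends on $k < n$ variables, so that after a linear change of coordinates $F \in S^d V'$ for a subspace $V' \subset V$ with $\dim V' = k$. The first step is to observe that
\[
  \rank_{\nu_d(\PP V)}(F) = \rank_{\nu_d(\PP V')}(F),
\]
since any Waring decomposition of $F$ by $d$-th powers of linear forms in $V$ pushes forward along a linear projection $V \to V'$ (concretely, set the non-essential coordinates to zero) to a decomposition by $d$-th powers in $V'$ of the same length, using that $F$ is fixed by the induced map on $S^d$. Hence $\rank(F) \le m_{k,d}$, and it suffices to prove $m_{k,d} \le g_{n,d}$ for every $1 \le k \le n-1$.

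The second step is a numerical comparison. The preceding lemma gives $m_{k,d} \le \lceil 2\binom{d+k-1}{d}/k \rceil$, and a brief computation shows that this upper bound is non-decreasing in $k$ for fixed $d \ge 1$ (the ratio of consecutive values is $(d+k)/(k+1) \ge 1$), so it is enough to treat $k = n-1$. On the other side, Alexander--Hirschowitz gives $g_{n,d} \ge \lceil \binom{d+n-1}{d}/n \rceil$. The quotient of the two binomial expressions simplifies to $(d+n-1)/(2n)$, which is at least $1$ exactly when $d \ge n+1$; taking ceilings preserves the inequality. The hypothesis $d \ge n+1$ for $n \ge 4$ also has the happy side-effect of excluding all four Alexander--Hirschowitz exceptional pairs $(n,d) \in \set{(3,4),(4,4),(5,3),(5,4)}$, so the stated lower bound on $g_{n,d}$ is valid throughout this regime.

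Finally one treats $n = 2$ and $n = 3$ by direct inspection. For $n=2$, non-conciseness forces $F$ to be a $d$-th power of a single linear form, so $\rank(F) = 1 \le g_{2,d}$. For $n=3$, using $m_{2,d} = d$ from \S\ref{sect_binary_forms} together with the explicit values of $g_{3,d}$, the needed inequality reduces to $(d+1)(d+2)/6 \ge d$, which holds for $d \ge 2$; the lone Alexander--Hirschowitz exception $g_{3,4}=6 > 4$ only makes things easier. The main obstacle is the bookkeeping around the ceilings and exceptional cases in the Alexander--Hirschowitz formula: I expect this to be a clean case split, since the hypothesis on $d$ is calibrated precisely so that the asymptotic comparison goes through for $n \ge 4$ and the small-$n$ cases succumb to the explicit formulas.
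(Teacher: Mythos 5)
Your proof is correct and follows the paper's strategy: reduce to the numerical inequality $m_{n-1,d} \le g_{n,d}$ via the lemma $m_{k,d} \le \lceil 2\binom{d+k-1}{d}/k \rceil$ and the Alexander--Hirschowitz formula, with $d \ge n+1$ making the relevant quotient $(d+n-1)/(2n) \ge 1$ and simultaneously excluding the exceptional pairs. The one place you genuinely diverge is $n=3$, $d=3$: you use the numerical comparison $m_{2,3}=3 \le 4=g_{3,3}$, whereas the paper appeals to the classification of ternary cubics (the unique form of greater-than-generic rank being $x^2y+y^2z$, which is concise) --- your route is more elementary there. Two small points of care. First, the Alexander--Hirschowitz formula you invoke is stated only for $d\ge 3$; for $d=2$ the statement is vacuous, since the rank of a quadratic equals its number of essential variables so no form exceeds $g_{n,2}=n$, which is how the paper disposes of that case --- your plugging $d=2$ into the $n=3$ formula happens to give the right conclusion only by coincidence. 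Second, the monotonicity you establish for the upper bound $\lceil 2\binom{d+k-1}{d}/k\rceil$ in $k$ is a valid but slightly indirect replacement for the more immediate observation $m_{k,d}\le m_{n-1,d}$ (a form essentially in $k$ variables is a form in $n-1$ variables of identical Waring rank), which is what the paper uses silently when it jumps straight from non-conciseness to $\rank(F)\le m_{n-1,d}$.
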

\begin{proof}
If $n=2$ and $F$ is not concise then $\rank(F) = 1$.
If $d=2$ then there are no forms with greater than the generic rank, so there is nothing to prove.
When $n=d=3$ the unique (up to coordinate change) form of greater than generic rank is $F = x^2 y + y^2 z$,
which is concise (see for example \cite[\S8]{MR2628829}).

Now assume $d \geq n+1$.
Since $2 \leq \frac{d+n-1}{n}$ we have
\[
  \frac{2(d+n-2)!}{(n-1)!d!} \leq \frac{(d+n-1)!}{n!d!},
\]
hence $m_{n-1,d} \leq g_{n,d}$.
It follows that if $F$ is not concise, then $\rank(F) \leq m_{n-1,d} \leq g_{n,d}$.
\end{proof}

\subsection{Dimensions of maximal rank loci for Veronese varieties}\label{sect: proof of dimension bound for veronese}

We can now prove Theorem~\ref{thm_dimension_bound_for_Veronese}.
\begin{proof}[Proof of Theorem~\ref{thm_dimension_bound_for_Veronese}]
    Pick an irreducible component $W\subset W_m$ and 
      let $F \in Y$ be a general form from that component.
    Then $F$ is concise by Lemma~\ref{lem_form_of_max_rank_is_concise}.
    The closure $\overline{\GL(V) \cdot F}$ of the orbit of $F$ is contained in $W$.
    In particular, by Proposition~\ref{prop_dimension_of_orbit_of_a_concise_form},
    \[
       \dim W \ge \dim \overline{\GL(V) \cdot F} \ge \binom{n+1}{2} -1,
    \]
    and if $\dim W = \binom{n+1}{2} -1$, then $W = \overline{\GL(V) \cdot Q^k}$.
\end{proof}

\begin{example}
For $n=d=3$ we have $g=4$ and $m=5$.
The rank locus $W_5$ is the closure of the orbit of the form $x^2y+y^2z$,
the equation of a smooth plane conic plus a tangent line.
This orbit has dimension $6$, so $\dim W_m = 6 = \binom{n+1}{2}$.
\end{example}

%\begin{corollary}
%Let $d,n \geq 3$, let $X = \nu_d(\PP^{n-1})$, and let $g < k < m$.
%Then every irreducible component of the rank locus $W_k$ has dimension at least $\binom{n+1}{2}-1+2(m-k)$.
%\end{corollary}

\section{Tensors of format \texorpdfstring{$2\times 4\times 4$}{2x4x4}}
\label{sect: tensors}

When $X = \Seg(\bbP^{n_1-1} \times \dotsb \times \bbP^{n_k-1}) \subset \bbP^N$ is a Segre variety,
$N = n_1\dotsm n_k-1$,
then $\bbP^N$ is the projective space of \emph{tensors of format $n_1 \times \dotsb \times n_k$}
and $X$ corresponds to the simple tensors.
Rank with respect to $X$ is the usual tensor rank.

Tensors of format $2 \times b \times c$
may be regarded as pencils of matrices, which admit a normal form due to Kronecker.
Using this normal form we characterize the loci of $2 \times 4 \times 4$ tensors of higher than generic rank.
Tensors of format $2 \times 4 \times 4$ have generic rank $4$ and maximal rank $6$;
we show that $W_6 + X = W_5$ and $W_5 + X = W_4$, where $W_4 = \PP^{31}$ is the space of all $2 \times 4 \times 4$ tensors.

\subsection{Concise tensors}

A tensor $T \in V_1 \otimes \dotsb \otimes V_k$ is \emph{concise}
if $T \in V'_1 \otimes \dotsb \otimes V'_k$ with $V'_i \subseteq V_i$ for each $i$ implies $V'_i = V_i$ for each $i$.

Fix $T \in V_1 \otimes \dotsb \otimes V_k$ and for each $i$ let $V'_i \subseteq V_i$ be the image of the induced map
$V_1^* \otimes \dotsb \otimes \widehat{V_i^*} \otimes \dotsb \otimes V_k^* \to V_i$.
It is easy to see that $\rank(T) \geq \dim V'_i$ for each $i$, and also that
$T \in V'_1 \otimes \dotsb \otimes V'_k$.
In particular, if $\rank(T) < \max\{\dim V_1,\dotsc,\dim V_k\}$, then $T$ is not concise.

Non-conciseness is a closed condition
(because it is defined by vanishing of minors of certain matrices; see for example \cite[\S3.4.1]{MR2865915}).
Hence the locus of non-concise tensors
contains the secant variety $\sigma_r(X)$ for each $r < \max\{\dim V_1,\dotsc,\dim V_k\}$,
where $X$ is the Segre variety.

\begin{remark}
Observe that non-square matrices are always non-concise,
and more generally if $n_i > \prod_{j \neq i} n_j$ for some $i$,
then every tensor of format $n_1 \times \dotsm \times n_k$ is non-concise.
\end{remark}

\subsection{Normal form}

Let $\{s,t\}$ be a basis for $\bbk^2$ and let $T \in \bbk^2 \otimes \bbk^b \otimes \bbk^c$ be a tensor.
Identifying $\bbk^b \otimes \bbk^c$ with the space of $b \times c$ matrices, we can write
$T = s \otimes M_1 + t \otimes M_2$ for some $b \times c$ matrices $M_1, M_2$.
The tensor $T$ corresponds to the pencil spanned by $M_1$ and $M_2$ in $\PP(\bbk^b \otimes \bbk^c)$;
changes of basis in the pencil correspond to changes of basis in $\bbk^2$.
It is convenient to write $T$ as the $b \times c$ matrix $sM_1+tM_2$ whose entries are homogeneous linear forms in $s$ and $t$.

There is a normal form due to Kronecker for tensors $T \in \bbk^2 \otimes \bbk^b \otimes \bbk^c$, 
i.e.~a representative of the $\GL(\bbk^2) \times \GL(\bbk^{b}) \times \GL(\bbk^{c})$-orbit of $T$,
or in other words, a convenient choice of basis that makes $T$ particularly ``simple''.
Further, the results of Grigoriev, Ja'Ja'™ and Teichert calculate the rank of each tensor in normal form, see \cite[Section~5]{MR2996361}.
For simplicity of some calculations, we restrict our considerations to the case of even square matrices.
Later we restrict further to the case of $4 \times 4$ matrices.

Kronecker's normal form is as follows.
Suppose $V_{n} = \bbk^2 \otimes \bbk^{2n} \otimes \bbk^{2n}$
and $X =\Seg(\PP^1 \times \PP^{2n-1} \times \PP^{2n-1}) \subset \PP V_n$ is a Segre variety.
We encode the tensors in $V_n$ as $2n \times 2n$ matrices with entries linear forms in two variables $s$ and $t$.
For a positive integer $\epsilon$ let $L_{\epsilon}$ denote the $\epsilon \times (\epsilon +1)$ matrix
\[
  L_{\epsilon} = 
  \begin{psmallmatrix}
      s & t & 0 & \cdots & 0 & 0\\
      0 & s & t & \cdots & 0 & 0\\
      0 & 0 & s & \cdots & 0 & 0\\
      \vdots & \vdots & \vdots & \ddots & \vdots & \vdots\\
      0 & 0 & 0 & \cdots & t & 0 \\
      0 & 0 & 0 & \cdots & s & t
  \end{psmallmatrix} .
\]
Let $F$ be an $f \times f$ matrix with coefficients in $\bbk$ in Jordan normal form.
For $\lambda \in \bbk$,
   denote by $d_{\lambda}(F)$ the number of Jordan blocks of size at least $2$ with the eigenvalue $\lambda$, 
   and by $m(F)$ the maximum among $d_{\lambda}(F)$.

Given a sequence of matrices $\fromto{M_1}{M_k}$ depending on variables $s$ and $t$, denote by $M_1 \oplus \dotsb \oplus M_k$ the block matrix
   \[
      M_1 \oplus \dotsb \oplus M_k = 
      \begin{pmatrix}
      M_1 & 0 & \cdots & 0\\
      0 & M_2 & \cdots & 0\\
      \vdots&\vdots& \ddots &\vdots\\
      0 &  0  & \cdots & M_k
  \end{pmatrix} .
   \]
In the above notation we allow $M_i$ to be a zero matrix $Z_{p\times q}$ of size $p \times q$, where $p,q \ge 0$ are nonnegative integers.
Thus, for example, if $M_2$ is a $0 \times 5$ matrix, then $M_1 \oplus M_2$ is the matrix $M_1$ with five columns of zeroes added. 
\begin{theorem}[{\cite[Proposition 5.1 and Theorem 5.3]{MR2996361}}]\label{thm_kronecker_normal_form_and_ranks}
   For any tensor $T\in V_{n} = \bbk^2 \otimes \bbk^{2n} \otimes \bbk^{2n}$ there exists a choice of basis
   of $\bbk^2$, $\bbk^{2n}$, and $\bbk^{2n}$
      such that $T$ is represented by a matrix 
      \[
         T = L_{\epsilon_1} \oplus L_{\epsilon_2} \oplus  \dotsb \oplus L_{\epsilon_k} \oplus
             L_{\eta_1}^\transpose \oplus L_{\eta_2}^\transpose \oplus  \dotsb \oplus L_{\eta_l}^\transpose \oplus
             (s\Id_{f}+ t F) \oplus Z_{p\times q},
      \]
      where
       $k$, $l$, $f$, $p$, and $q$ are nonnegative integers (possibly zero);
       each $\epsilon_i$ and $\eta_j$ is a positive integer;
       $\Id_f$ is the $f \times f$ identity matrix over $\bbk$;
       $F$ is a $f \times f$ matrix in its Jordan normal form; and
       $Z_{p\times q}$ is the $p\times q$ zero matrix.
       
     Moreover, the rank of $T$ is equal to the sum of the ranks of the blocks in this normal form,
     where
       $\rank(L_{\epsilon_i}) = \epsilon_i +1$, $\rank(L_{\eta_j}^\transpose) = \eta_j +1$,
       $\rank(s\Id_{f}+ t F) = f + m(F)$, and 
       $\rank(Z_{p\times q}) =0$.
   That is:
   \[
      \rank(T) = \sum_{i} \epsilon_i +\sum_{j} \eta_j + k + l + f + m(F).
   \]
\end{theorem}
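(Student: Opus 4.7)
The plan is to separate the statement into its two natural parts: first, the existence of the block-diagonal normal form, and second, the rank computation, which further splits into (a) block-wise rank formulas and (b) additivity of rank across the blocks.

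For the normal form I would apply the classical Kronecker--Weierstrass theory of matrix pencils to the $2n \times 2n$ pencil $sM_1 + tM_2$ representing $T$. Concretely, I would first isolate the ``singular'' part: choose a vector $v(s,t)$ of minimal positive degree in the right kernel of $sM_1+tM_2$, use it to peel off a block of shape $L_\epsilon$, and iterate. A dual construction on the left kernel produces the $L_\eta^\transpose$ blocks, with any rows and columns annihilated by both $M_1$ and $M_2$ contributing the trivial block $Z_{p\times q}$. What remains is a square ``regular'' pencil of the form $sA+tB$ with $A$ invertible; the classification of $\bbk[s,t]$-modules via elementary divisors (equivalently, the Jordan decomposition of $A^{-1}B$ over the algebraically closed field $\bbk$) then yields the block $sI_f + tF$.

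For the rank formula I would treat each indecomposable block in turn. The equality $\rank L_\epsilon = \epsilon+1$ admits a standard proof: an explicit decomposition into $\epsilon+1$ simple tensors (one per column, after a shift of basis) gives the upper bound, while the lower bound follows from the fact that the span of the $s$- and $t$-slices already has dimension $\epsilon+1$ in the appropriate flattening. The transpose case is identical. For $sI_f + tF$ in Jordan form, I would first verify directly that a single Jordan block $J_k(\lambda)$ contributes rank $k+1$ if $k\geq 2$ and rank $k$ if $k=1$; blocks with distinct eigenvalues keep their ``$+1$'' penalties separate, while blocks sharing an eigenvalue can share a single extra rank-one term, which explains the combinatorial shape of $m(F)$.

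The main obstacle is the additivity of rank across Kronecker's decomposition. Subadditivity is immediate from concatenating decompositions, but superadditivity is not automatic, since tensor rank is famously non-additive in general (Strassen's additivity conjecture is now known to fail). The crucial feature that rescues us here is that one factor has dimension $2$: in any decomposition $T = \sum_{i=1}^r u_i \otimes x_i \otimes y_i$, the points $[u_i] \in \PP^1$ have well-defined ratios, and by a mild genericity/limiting argument one may assume these ratios avoid the eigenvalues of $F$ and the exceptional pencil ratios of the $L_\epsilon$ and $L_\eta^\transpose$ blocks. One then localizes at each block, using the uniqueness (up to equivalence) of the Kronecker decomposition to force any minimal decomposition to respect the block structure. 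This is exactly the Grigoriev--Ja'Ja'--Teichert argument of \cite{MR2996361}, which I would follow in detail for the final additivity step.
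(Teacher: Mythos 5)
The paper does not prove this statement; it is imported verbatim from \cite[Proposition 5.1 and Theorem 5.3]{MR2996361}, which itself rests on the work of Grigoriev, Ja'Ja' and Teichert. So there is no proof in the paper to compare against. Your outline follows the same architecture as that literature: Kronecker--Weierstrass normal form for the pencil, per-block rank formulas, and then the delicate additivity over blocks, and it correctly identifies where the real difficulty lies (super-additivity of rank, which fails in general and must be forced here using the $\bbk^2$ factor and the rigidity of the Kronecker decomposition). That part of your plan is sound.

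However, your heuristic for the regular block is reversed, and in a way that contradicts the formula you are trying to explain. You write that ``blocks with distinct eigenvalues keep their $+1$ penalties separate, while blocks sharing an eigenvalue can share a single extra rank-one term,'' but since $m(F)=\max_\lambda d_\lambda(F)$ where $d_\lambda(F)$ counts the size-$\ge 2$ Jordan blocks with eigenvalue $\lambda$, the opposite is true: Jordan blocks with the \emph{same} eigenvalue each contribute their own excess rank-one term (their defects sit over the same point $[-\lambda:1]\in\PP^1$ and stack), while Jordan blocks with \emph{distinct} eigenvalues can share a single extra term (only the worst fiber matters). Concretely, $F=J_2(0)\oplus J_2(0)$ has $m(F)=2$ and the pencil $sI_4+tF$ has rank $6$, whereas $F=J_2(0)\oplus J_2(1)$ has $m(F)=1$ and rank $5$; the paper itself records exactly this second example just after the theorem, as a warning that rank is \emph{not} additive over a finer block decomposition of $sI_f+tF$ into Jordan blocks. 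This is worth fixing, because it is the very phenomenon that makes the regular part a single indivisible block in the theorem's additivity statement: the claimed additivity holds across the $L_{\epsilon_i}$, $L_{\eta_j}^\transpose$, $(sI_f+tF)$ and $Z$ blocks only, not across Jordan sub-blocks, and a proof that gets the eigenvalue-sharing direction wrong would also mis-handle the one place where additivity genuinely breaks down.
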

See also references discussed in \cite[Remark 5.4]{MR2996361}.

It is straightforward to see that one can always further change the coordinates so that one of the eigenvalues of $F$ is $0$.
% In fact, with a little more work, one can set up to three eigenvalues to (almost) any  numbers.
% \begin{lemma}
%    Let $T = L_{\epsilon_1} \oplus L_{\epsilon_2} \oplus  \dotsb \oplus L_{\epsilon_k} \oplus
%              L_{\eta_1}^\transpose \oplus L_{\eta_2}^\transpose \oplus  \dotsb \oplus L_{\eta_l}^\transpose \oplus
%              (s\Id_{f}+ t F) \oplus Z_{p\times q}$ be as in Theorem~\ref{thm_kronecker_normal_form_and_ranks}.
%    Suppose $\fromto{\lambda_1}{\lambda_k}$ for $k$ equal to $1$, $2$, or $3$ are distinct eigenvalues of $F$.
%    Let $\fromto{\mu_1}{\mu_k}$ be pairwise distinct elements of $\kk$. 
%    Then there exists a choice of bases of $\bbk^2$, $\bbk^{2n}$, $\bbk^{2n}$,
%       and a matrix $F'$  with coefficients in $\bbk$ in its Jordan normal form, which has the same block structure as $F$, 
%       but with different eigenvalues, with each $\lambda_i$ replaced with $\mu_i$, 
%       such that $T$ in the new bases has the normal form
%        \[
%          T = L_{\epsilon_1} \oplus L_{\epsilon_2} \oplus  \dotsb \oplus L_{\epsilon_k} \oplus
%              L_{\eta_1}^\transpose \oplus L_{\eta_2}^\transpose \oplus  \dotsb \oplus L_{\eta_l}^\transpose \oplus
%              (s\Id_{f}+ t F') \oplus Z_{p\times q}.
%       \]
% \end{lemma}
% \begin{proof}
%    Let $\phi$
% \end{proof}
% 
% 

We stress that if $T = M_1 \oplus M_2$, then the rank of $T$ is not necessarily equal  to $\rank(M_1) + \rank(M_2)$.
For example, let
                 $M_1 = \begin{psmallmatrix}
                           s & t \\
                           0 & s
                        \end{psmallmatrix}$ 
and 
                 $M_2 = \begin{psmallmatrix}
                           s +t& t \\
                           0 & s+t
                        \end{psmallmatrix}$.
Then, by Theorem~\ref{thm_kronecker_normal_form_and_ranks} the rank of $T = M_1 \oplus M_2$ is $5$, while $\rank(M_1) = \rank(M_2)=3$.

\subsection{Generic and maximal rank}

For tensors in $V_n=\bbk^2 \otimes \bbk^{2n} \otimes \bbk^{2n}$,
the generic rank is $g=2n$, and general tensors have the normal form $(s\Id_{2n} +t F)$,
where $F$ is a diagonal matrix with distinct (generic) eigenvalues.
This is because a general pencil contains an invertible matrix,
and the blocks $L_{\epsilon_i}$ or $L_{\eta_j}^\transpose$ have no invertible matrices.

Furthermore, the maximal rank is $m =3n$,
and any tensor $T$ of maximal rank is of the form $(s\Id_{2n}+ t F)$,
where $F$ has a unique eigenvalue (which we can assume to be $0$) 
and $n$ Jordan blocks of size $2\times 2$.
That is, after reordering of rows and columns, we can write $T$ as 
           $\begin{pmatrix}
               s \Id_{n} & t \Id_n\\
               0         & s \Id_n
            \end{pmatrix}
           $.

Thus $W_m = W_{3n} = \overline{ G \cdot [T]}$, where $G = \GL(\bbk^2) \times \GL(\bbk^{2n}) \times \GL(\bbk^{2n})$
is the automorphism group of $X \subset \PP V_n$, and $T =
            \begin{pmatrix}
               s \Id_{n} & t \Id_n\\
               0         & s \Id_n
            \end{pmatrix}
           $.
In particular, $W_m$ is irreducible.

\begin{remark}\label{remark: wm not contained in secant}
Note that $T =
            \begin{pmatrix}
               s \Id_{n} & t \Id_n\\
               0         & s \Id_n
            \end{pmatrix}
           $
is concise, so 
\[T \notin \sigma_{2n-1}(X) = \sigma_{g-1}(X).\]
Hence $W_m \not\subset \sigma_{g-1}(X)$.
\end{remark}

We compute the dimension of $W_m = W_{3n}$.
The main technique is to reduce to a system of linear equations via the Lie algebra stabilizer.
We illustrate this in some detail in this case, as we will use the same method (with fewer details given)
to compute dimensions of other orbits of $2 \times 4 \times 4$ tensors in the next section.
\begin{proposition}\label{prop: dim of tensor wm}
For $X=\PP^1\times \PP^{2n-1} \times \PP^{2n-1}\subset \PP^{8n^2-1}$ 
the dimension of  $W_m = W_{3n}$ is $6n^2$.
\end{proposition}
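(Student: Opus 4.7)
The plan is to use the description $W_m = \overline{G \cdot [T]}$ just given, with $G = \GL(\kk^2) \times \GL(\kk^{2n}) \times \GL(\kk^{2n})$ of dimension $8n^2+4$, and to apply the orbit-stabilizer formula $\dim W_m = \dim G - \dim G_{[T]}$, where $G_{[T]}$ is the projective stabilizer. The task then reduces to showing $\dim G_{[T]} = 2n^2+4$, which I would do by computing the Lie algebra stabilizer directly.

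For a triple $(\dot A, \dot B, \dot C) \in \mathfrak{gl}_2 \oplus \mathfrak{gl}_{2n} \oplus \mathfrak{gl}_{2n}$, the induced action on the matrix $T = s\Id_{2n} + tN$, where $N = \begin{pmatrix} 0 & \Id_n \\ 0 & 0 \end{pmatrix}$, is $\dot B T - T \dot C$ corrected by the $\dot A$-action on the linear forms $s,t$. The condition that this action equal $\lambda T$ for some scalar $\lambda$ splits by coefficients of $s$ and $t$ into two $(2n)\times(2n)$ matrix equations. The $s$-equation can be solved for $\dot C$: it gives $\dot C = \dot B - \mu\,\Id_{2n} + \gamma'N$ for some explicit linear functions $\mu(\lambda,\dot A)$ and $\gamma'(\dot A)$. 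Substituting into the $t$-equation and using $N^2 = 0$ collapses that equation into a commutator relation
\[
  [\dot B, N] + c_1 \Id_{2n} + c_2 N = 0,
\]
in which $c_1, c_2$ depend only on the entries of $\dot A$ (the $\lambda$-dependence cancels).

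Next I would write $\dot B = \begin{pmatrix} B_{11} & B_{12} \\ B_{21} & B_{22} \end{pmatrix}$ in $n\times n$ blocks and compute
\[
  [\dot B, N] = \begin{pmatrix} -B_{21} & B_{11} - B_{22} \\ 0 & B_{21} \end{pmatrix}.
\]
Matching this against the displayed commutator relation yields three conditions: $B_{21} = 0$; one linear relation forcing a single entry of $\dot A$ to vanish; and a scalar-matrix equation $B_{11} - B_{22} = (\text{scalar in }\dot A)\Id_n$ that determines $B_{22}$ from $B_{11}$ and $\dot A$. The block $B_{12}$ is completely unconstrained, $\dot C$ is determined by the $s$-equation once $\dot B$, $\dot A$, and $\lambda$ are chosen, and $\lambda$ itself remains a free scalar parameter. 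Counting: $\dot A$ contributes $3$, $\dot B$ contributes $2n^2$ (from $B_{11}$ and $B_{12}$), $\lambda$ contributes $1$, and $\dot C$ contributes $0$, for a total of $2n^2+4$. Hence $\dim W_m = (8n^2+4) - (2n^2+4) = 6n^2$.

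The step requiring the most care is decoupling the $s$- and $t$-equations: after eliminating $\dot C$, one must verify that the $\lambda$-dependence really does cancel in the $t$-equation and that the three resulting conditions on $(\dot A, \dot B)$ are genuinely independent, so that no entry of $\dot A$ gets doubly constrained and no parameter is miscounted. Everything else is routine block-matrix linear algebra.
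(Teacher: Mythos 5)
Your proposal is correct and takes essentially the same approach as the paper: both compute the Lie algebra stabilizer via the $d\rho$-action on $T = s\Id_{2n} + tN$ and apply the orbit--stabilizer formula. The only cosmetic difference is that you compute the projective stabilizer directly (allowing $d\rho \cdot T = \lambda T$, giving dimension $2n^2+4$), while the paper computes the affine stabilizer (requiring annihilation, giving $2n^2+3$) and subtracts one at the end for the passage to the projective orbit; the two bookkeepings coincide.
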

\begin{proof}
Let $\rho$ denote the action of $G$ on $V_n = \bbk^2 \otimes \bbk^{2n} \otimes \bbk^{2n}$.

The dimension of the orbit $G \cdot T$ is equal to the codimension in $G$ of the stabilizer subgroup of $T$ \cite[\S3.7]{MR2265844}.
We compute the dimension of the stabilizer subgroup by finding the dimension of its tangent space
at the identity $e \in G$.
Recall that in the representation $d \rho$ of the Lie algebra $T_e(G) \cong \End(\bbk^2) \times \End(\bbk^{2n}) \times \End(\bbk^{2n})$ on $V_n$,
a tangent vector $(g_1,g_2,g_3)$ acts on $(sM_1+tM_2) \in V_2$ by
\begin{multline*}
  d\rho(g_1,g_2,g_3) . (s M_1 + t M_2) = \Big( (as+ct)M_1 + (bs+dt)M_2 \Big) \\
    + \Big( s (g_2 M_1) + t (g_2 M_2) \Big) - \Big( s (M_1 g_3) + t (M_2 g_3) \Big),
\end{multline*}
where $g_1 = \begin{pmatrix} a & b \\ c & d \end{pmatrix}$ \cite[(6.1.1)]{MR2265844}.
Recall also that a tangent vector $(g_1,g_2,g_3) \in T_e(G)$ lies in the tangent space to the stabilizer of $T$ at $e$ if and only if
the derivative $d \rho (g_1,g_2,g_3)$ annihilates $T$ \cite[\S3.5, Theorem 2]{MR2265844}.

Write in block form $T = \begin{pmatrix} s I_n & t I_n \\ 0 & s I_n \end{pmatrix}$,
so $M_1 = I_{2n}$ and $M_2 = \begin{pmatrix} 0 & I_n \\ 0 & 0 \end{pmatrix}$.
Write $g_2 = \begin{pmatrix} A_{11} & A_{12} \\ A_{21} & A_{22} \end{pmatrix}$,
$g_3 = \begin{pmatrix} B_{11} & B_{12} \\ B_{21} & B_{22} \end{pmatrix}$,
where the $A_{ij}$ and $B_{ij}$ are $n \times n$ matrices.
Then $(g_1,g_2,g_3)$ is in the tangent space to the stabilizer of $T$ if and only if
\begin{multline*}
  \Big( (as+ct)M_1 + (bs+dt)M_2 \Big) + \Big( s (g_2 M_1) + t (g_2 M_2) \Big) \\
    - \Big( s (M_1 g_3) + t (M_2 g_3) \Big) = 0 .
\end{multline*}
The left hand side is
\begin{multline*}
  \begin{pmatrix}
    a I_n + A_{11} - B_{11} & b I_n + A_{12} - B_{12} \\
            A_{21} - B_{21} & a I_n + A_{22} - B_{22}
  \end{pmatrix} s
    \\
    +
  \begin{pmatrix}
    c I_n - B_{21} & d I_n + A_{11} - B_{22} \\
    0 & c I_n + A_{21} 
  \end{pmatrix} t.
\end{multline*}
This must vanish identically, which yields the equations
\begin{align*}
  B_{11} &= a I_n + A_{11},  &  A_{21} &= 0, \\
  B_{12} &= b I_n + A_{12},  &  B_{21} &= 0, \\
  B_{22} &= d I_n + A_{11},  &  c &= 0, \\
  A_{22} &= (d-a) I_n + A_{11} . & &
\end{align*}
Note that $A_{11},A_{12},a,b,d$ are free, so the stabilizer has dimension $2n^2+3$.
Since $\dim G = 8n^2+4$, the affine orbit $G \cdot T \subset V_n$ has dimension $6n^2+1$.
The projective orbit $G \cdot [T] \subset \PP V_n$ has dimension one less,
since $G$ contains subgroups isomorphic to $\mathbb{G}_m = \bbk^*$ that act on $V_n$ as rescaling.
So $\dim W_m = \dim G \cdot [T] = 6n^2$, as claimed.
\end{proof}

\begin{remark}
This shows that for $X$ as above, some of the intermediate joins $W_{3n} + kX$ 
for $k\in \setfromto{1}{n-1}$ must be highly defective.
Indeed, the expected dimension of $W_{3n}+ \lceil \frac{n}{2}\rceil X$ is already the dimension of the ambient $\PP^{8n^2-1}$,
while we know that even $W_{3n} + (n-1)X$ does \emph{not} fill $\PP^{8n^2-1}$.
\end{remark}

\subsection{Orbits of \texorpdfstring{$2 \times 4 \times 4$}{2x4x4} tensors}

We now specialise to the case $n=2$, i.e., tensors in $V_2 = \bbk^2 \otimes \bbk^4 \otimes \bbk^4$.

Let $G = \Gl_2 \times \Gl_4 \times \Gl_4$ and consider the natural action of $G$ on $\PP(V_2)$.
Note that $\dim G = 36$ and $\dim \PP(V_2) =31$.
\begin{lemma}\label{lem_orbits_in_V_2}
   The orbit structure of the action of $G$ on $\PP(V_2)$ is as follows.
   \begin{enumerate}
    \item There is no open orbit.
    \item The only orbits of codimension $1$ are the orbits of (classes of) tensors (in their Kronecker normal forms):
    \begin{align*}
    T_4(\lambda_1, \lambda_2, \lambda_3, \lambda_4) &=
    \begin{psmallmatrix}
        s + \lambda_1 t & 0 & 0 & 0 \\
        0 & s + \lambda_2 t & 0 & 0 \\
        0 & 0 & s + \lambda_3 t & 0 \\
        0 & 0 & 0 & s + \lambda_4 t \\
    \end{psmallmatrix},
    \quad 
    \text{ or}\\
    T_5 (\lambda_1, \lambda_2, \lambda_3) &= 
    \begin{psmallmatrix}
        s + \lambda_1 t & t & 0 & 0 \\
        0 & s + \lambda_1 t & 0 & 0 \\
        0 & 0 & s + \lambda_2 t & 0 \\
        0 & 0 & 0 & s + \lambda_3 t \\
    \end{psmallmatrix}
    \end{align*}
   for pairwise distinct eigenvalues $\lambda_i$.
   Two tensors of the form $T_4(\lambda_1, \lambda_2, \lambda_3, \lambda_4)$ are in the same orbit if and only if 
      the cross-ratios of their eigenvalues
      $\frac{\lambda_1-\lambda_2}{\lambda_1-\lambda_3}\cdot \frac{\lambda_4-\lambda_3}{\lambda_4-\lambda_2}$ are equal 
      (after possibly permuting the order of $\lambda_i$).
   Any two tensors of the form $T_5(\lambda_1, \lambda_2, \lambda_3)$ are in the same orbit.
   \item There are finitely many orbits of codimension at least $2$.
   \end{enumerate}
\end{lemma}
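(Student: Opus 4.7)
The plan is to apply Kronecker's normal form (Theorem~\ref{thm_kronecker_normal_form_and_ranks}) to $2\times 4\times 4$ tensors regarded as pencils $sM_1+tM_2$ of $4\times 4$ matrices, and to compute the dimension of each $G$-orbit by a Lie-algebra stabilizer calculation in the style of Proposition~\ref{prop: dim of tensor wm}.

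For claim~(1), a generic tensor is of the form $T_4(\lambda_1,\ldots,\lambda_4)$ with four distinct eigenvalues. The factor $\GL_4\times\GL_4$ is entirely absorbed in the normalization to $s\Id_4+tF$, so the residual $\GL_2$-action on such $T_4$ is by Möbius transformations on the four eigenvalues viewed as points of $\PP^1$. The cross-ratio is therefore a continuous $G$-invariant, so the tensors of the form $T_4$ decompose into a $1$-parameter family of distinct $G$-orbits, none of which can be open in $\PP V_2\cong\PP^{31}$.

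For (2) and (3) I would carry out the stabilizer calculation for every Kronecker normal form. In the case $T=s\Id_f+tF$ with $g_1=\begin{psmallmatrix} a & b \\ c & d \end{psmallmatrix}$ and $g_2\in\mathfrak{gl}_4$, the stabilizer condition at the identity of $G$ reads
\[
  [F,g_2] = c\Id+(d-a)\,F-b\,F^2,\qquad g_3 = a\Id+bF+g_2.
\]
For generic $T_4$ this forces $c=d-a=b=0$ and $g_2$ to commute with $F$; hence the stabilizer has dimension $1+4=5$ and the projective orbit has dimension~$30$, i.e.~codimension~$1$. For $T_5$ (using that $\Id,F,F^2,F^3$ remain linearly independent) the analogous computation again yields a stabilizer of dimension~$5$; and since the Möbius group acts transitively on triples of distinct points of $\PP^1$ respecting the distinguished Jordan eigenvalue, all $T_5(\lambda_1,\lambda_2,\lambda_3)$ lie in a single $G$-orbit.

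Finally, every remaining Kronecker normal form yields an orbit of codimension at least~$2$. When $F$ has at most two distinct eigenvalues (for instance $F=J_2(\mu)\oplus J_2(\nu)$, $J_3(\mu)\oplus J_1(\nu)$, $J_4(\mu)$, or $J_2(\mu)\oplus J_1(\nu)\oplus J_1(\nu)$) the stabilizer of the eigenvalue configuration inside the Möbius group is positive-dimensional, which enlarges the stabilizer of $T$ and forces the orbit dimension down to at most~$29$. The degenerate case $F=J_2(\mu)\oplus J_2(\mu)$ is the maximal-rank orbit $W_m=W_6$ already treated in Proposition~\ref{prop: dim of tensor wm}. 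Normal forms involving an $L_\epsilon$-block, an $L_\eta^\transpose$-block, or a zero block $Z_{p\times q}$ force the rank to drop by Theorem~\ref{thm_kronecker_normal_form_and_ranks} and the effective dimension of the pencil to shrink, so a parallel stabilizer computation gives small orbits. Since there are only finitely many Kronecker types compatible with a $4\times 4$ pencil, and within each non-$T_4$ type the eigenvalue moduli are fully absorbed by the residual Möbius action, only finitely many orbits arise outside the $T_4$ family and the $T_5$ orbit, proving~(3). The main obstacle is the bookkeeping: one must systematically enumerate all Kronecker types and, in each degenerate case, correctly identify the extra $\GL_2$-stabilizer arising from symmetries of the eigenvalue configuration.
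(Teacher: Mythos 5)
Your approach is essentially the same as the paper's: reduce to Kronecker normal forms and compute $G$-orbit dimensions via the Lie algebra stabilizer, as in Proposition~\ref{prop: dim of tensor wm}. The closed-form stabilizer equations you derive for $T=s\Id+tF$, namely $[F,g_2]=c\Id+(d-a)F-bF^2$ and $g_3=a\Id+bF+g_2$, are correct and give a clean uniform treatment of the Jordan-block cases; the paper instead does the $T_4$ stabilizer at the group level (showing $g_1$ scalar, $g_3 = \mu_2 g_2^{-1}$, $g_2$ diagonal) and then switches to Lie-algebra computations for $T_5$ and the rest. Your observation that the extra stabilizer dimension in the degenerate Jordan cases comes from the positive-dimensional M\"obius stabilizer of the eigenvalue configuration (combined, when $F$ is not regular, with a larger centralizer of $F$ in $\mathfrak{gl}_4$) is a genuinely nicer conceptual explanation than the paper offers --- the paper simply states that the remaining $14$ concise orbits were checked by explicit computer calculation and records the result in Table~\ref{table_concise_orbits_in_V_2}.

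There are two gaps you should fix. First, your reason for dismissing the Kronecker types containing $L_\epsilon$, $L_\eta^\transpose$, or $Z_{p\times q}$ blocks is wrong: these do \emph{not} ``force the rank to drop.'' Concise $4\times 4$ pencils with $L$-blocks such as $L_1\oplus L_2^\transpose$ or $L_1\oplus L_1^\transpose\oplus(s+\mu t)$ have rank $5$, which is strictly higher than the generic rank $4$ of the $T_4$ family, and their effective pencil support is not reduced. The correct reason these orbits have codimension at least $2$ is again a stabilizer computation (they appear with dimensions $25$ and $26$ in Table~\ref{table_concise_orbits_in_V_2}); rank and orbit dimension are not monotonically related here. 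Second, you do not explicitly handle the non-concise tensors: types with a genuine $Z_{p\times q}$ block or with $M_1,M_2$ linearly dependent. The paper disposes of these at the outset by citing \cite[Section~6]{MR2996361} for the fact that the non-concise locus has codimension $4$ and consists of finitely many orbits, after which it suffices to treat concise tensors; some such reduction is needed to make your ``finitely many Kronecker types'' count go through, since without conciseness the blocks $L_\epsilon$, $L_\eta^\transpose$ need not satisfy $k=l$ and $p=q=0$. With those two repairs, plus the bookkeeping you already flag as necessary, your argument would match the paper's.
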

\begin{proof}
   The set of projective classes of nonconcise tensors 
   (i.e.~those contained in some $\PP(\bbk^1 \otimes \bbk^4 \otimes \bbk^4)$ or $\PP(\bbk^2 \otimes \bbk^3 \otimes \bbk^4)$ 
      or $\PP(\bbk^2 \otimes \bbk^4 \otimes \bbk^3)$) is $G$-invariant, of dimension $27$ (hence codimension $4$), and has only finitely many orbits
      \cite[Section~6]{MR2996361}.
   Thus it is enough to prove the lemma for concise tensors.

   To see when tensors of the form $T_4=T_4(\lambda_1, \lambda_2, \lambda_3, \lambda_4)$ are in the same orbit,
      note $\det(T_4) = (s+\lambda_1 t) \dotsm(s + \lambda_4 t)$ determines four points $[-\lambda_i,1]$ on $\PP^1$ parametrised by $s,t$.
   In particular, tensors with different cross-ratios of eigenvalues (up to permutation) cannot be in the same orbit.
   On the other hand, if there are two sets of eigenvalues with the same cross-ratio, 
      then we can change the coordinates $(s,t)$ on $\kk^2$, and then also rescale columns to get from one tensor to the other.
   Let $G^0_{[T_4]} \subset G$ be the identity component of the stabilizer of $[T_4] \in \PP(V_2)$.
   Suppose $(g_1,g_2, g_3) \in G^0_{[T_4]}$, where $g_1 \in \GL_2$, $g_2 \in \GL_4$ and $g_3 \in \GL_4$.
   The action of $g_1$ on $\PP^1$ must preserve the four points (zeroes of determinant).
   Thus $g_1=\mu_1 \Id_2$ is a rescaling of the identity.
   Restricting to the $s$ coordinate, we see that the product  $g_2 g_3 = \mu_2 \Id_4$ is also a rescaling of the identity, 
      that is $g_3 = \mu_2 g_2^{-1}$.
   Hence restricting to the $t$ coordinate, $g_2$ commutes
     with a diagonal matrix with pairwise distinct entries.
   Then it is straightforward to see that $g_2$ is an invertible diagonal matrix, and any invertible diagonal matrix
   can occur as $g_2$. Thus $\dim G^0_{[T_4]} =6$ 
      and the dimension of the orbit of $[T_4]$ is $30 = 36-6$, as claimed.

   In particular, since a general tensor is of the form $T_4$, it lies in an orbit of codimension $1$.
   So there is no open orbit.
   
   To see that $T_5(\lambda_1, \lambda_2, \lambda_3)$ is always in the same orbit, 
     we use a linear transformation $\phi\colon \PP^1 \to \PP^1$,
      which takes the triple of points $([-\lambda_1,1],[-\lambda_2,1],[-\lambda_3,1])$ to $([0,1],[-1,1],[1,1])$. 
   Let $[1, \nu_1]$ be the image of $[1,0]$.
   Lifting $\phi$ to $\widehat{\phi}\colon\kk^2 \to \kk^2$ we obtain that 
   \[
     \widehat{\phi}(T_5) = 
     \begin{psmallmatrix}
        \nu_2 s  & \nu_3( -\nu_1 s  +t) & 0 & 0 \\
        0 & \nu_2s & 0 & 0 \\
        0 & 0 & \nu_4 (s + t) & 0 \\
        0 & 0 & 0 & \nu_5(s -t)
    \end{psmallmatrix}
   \]
   for some nonzero constants $\nu_2,\dotsc,\nu_5$.
   Then using column and row rescalings we can modify the matrix to   
   $\begin{psmallmatrix}
        s  &  -\nu_1 s  +t & 0 & 0 \\
        0 & s & 0 & 0 \\
        0 & 0 &  s + t & 0 \\
        0 & 0 & 0 & s -t
   \end{psmallmatrix}$.
   Finally, we add a multiple of the first column to the second column to obtain 
   $\begin{psmallmatrix}
        s & t & 0 & 0 \\
        0 & s & 0 & 0 \\
        0 & 0 & s + t & 0 \\
        0 & 0 & 0 & s -t
   \end{psmallmatrix}$.
   Thus any $T_5(\lambda_1, \lambda_2, \lambda_3)$ is in the same $G$-orbit as $T_5(0, 1, -1)$.
   As in the proof of Proposition \ref{prop: dim of tensor wm},
   we can check that the dimension of the Lie algebra stabilizer of $[T_5(0,1,-1)] \in \PP (V_2)$ is $6$,
      hence its orbit is of codimension $1$.

   It remains to check that
   there are finitely many other concise orbits and that
   all these other orbits have codimension at least $2$, i.e.~dimension at most $29$.

   For the first part we use the normal form described in Theorem~\ref{thm_kronecker_normal_form_and_ranks}
   and rescaling to fix the eigenvalues.
   It is straightforward to see that there are $14$ concise orbits other than the $T_4$ and $T_5$ cases.
   The second part is an explicit computer calculation of the dimension of the Lie algebra stabilizer for each of the cases above,
   as in the proof of Proposition \ref{prop: dim of tensor wm}.
   Representatives for the $14$ orbits are listed, along with the dimensions of the orbits and their ranks,
   in Table~\ref{table_concise_orbits_in_V_2}.
\end{proof}

\begin{table}[htb]
   \[
     \begin{array}{lcc @{\hspace{2cm}} lcc}
     \toprule
      \text{orbit}  &  \dim & \text{rank}  &    \text{orbit}  &  \dim &   \text{rank} \\
     \midrule  \addlinespace
      \begin{psmallmatrix}
         s & t & 0 & 0\\
         0 & s & 0 & 0\\
         0 & 0 & s & t\\
         0 & 0 & 0 & s
      \end{psmallmatrix}&  24 & 6 &
              \begin{psmallmatrix}
                 s & t & 0 & 0\\
                 0 & s & t & 0\\
                 0 & 0 & 0 & s\\
                 0 & 0 & 0 & t
              \end{psmallmatrix} & 26 & 5 \\ \addlinespace
      \begin{psmallmatrix}
         s & t & 0 & 0\\
         0 & s & t & 0\\
         0 & 0 & s & 0\\
         0 & 0 & 0 & s+t
      \end{psmallmatrix}&  29 & 5 &
              \begin{psmallmatrix}
                 s & t & 0 & 0\\
                 0 & 0 & s & 0\\
                 0 & 0 & t & s\\
                 0 & 0 & 0 & t
              \end{psmallmatrix} & 26 & 5 \\ \addlinespace
      \begin{psmallmatrix}
         s & t & 0 & 0\\
         0 & s & 0 & 0\\
         0 & 0 & s+t & t\\
         0 & 0 & 0 & s+t
      \end{psmallmatrix}& 29 & 5 &
              \begin{psmallmatrix}
                 s & t & 0 & 0\\
                 0 & 0 & s & 0\\
                 0 & 0 & t & 0\\
                 0 & 0 & 0 & s
              \end{psmallmatrix} &  25 & 5 \\ \addlinespace
      \begin{psmallmatrix}
         s & t & 0 & 0\\
         0 & s & t & 0\\
         0 & 0 & s & t\\
         0 & 0 & 0 & s
      \end{psmallmatrix}& 28 & 5 &
              \begin{psmallmatrix}
                 s & t & 0 & 0\\
                 0 & s & 0 & 0\\
                 0 & 0 & s & 0\\
                 0 & 0 & 0 & s
              \end{psmallmatrix}& 22 & 5 \\ \addlinespace
      \begin{psmallmatrix}
         s & t & 0 & 0\\
         0 & s & 0 & 0\\
         0 & 0 & s+t & 0\\
         0 & 0 & 0 & s+t
      \end{psmallmatrix}&  27 & 5 &
              \begin{psmallmatrix}
                 s & 0 & 0 & 0\\
                 0 & s & 0 & 0\\
                 0 & 0 & s+t & 0\\
                 0 & 0 & 0 & s-t
              \end{psmallmatrix} & 28 & 4 \\ \addlinespace
      \begin{psmallmatrix}
         s & t & 0 & 0\\
         0 & s & 0 & 0\\
         0 & 0 & s & 0\\
         0 & 0 & 0 & s+t
      \end{psmallmatrix}&  27 & 5 &
              \begin{psmallmatrix}
                 s & 0 & 0 & 0\\
                 0 & s & 0 & 0\\
                 0 & 0 & s+t & 0\\
                 0 & 0 & 0 & s+t
              \end{psmallmatrix} &  25 & 4 \\ \addlinespace
      \begin{psmallmatrix}
         s & t & 0 & 0\\
         0 & s & t & 0\\
         0 & 0 & s & 0\\
         0 & 0 & 0 & s
      \end{psmallmatrix}&  26 & 5 &
              \begin{psmallmatrix}
                 s & 0 & 0 & 0\\
                 0 & s & 0 & 0\\
                 0 & 0 & s & 0\\
                 0 & 0 & 0 & s+t
              \end{psmallmatrix}&  23 & 4 \\ \addlinespace
     \bottomrule
    \end{array}
   \]
   \caption{Representatives for the concise orbits in $\PP(\kk^2\otimes \kk^4\otimes \kk^4)$
            of codimension at least $2$, the dimensions (denoted $\dim$) of their orbits, and their ranks.}\label{table_concise_orbits_in_V_2}
\end{table}

Consider the determinant of $sM_1 + t M_2$ as a homogeneous polynomial of degree $4$ in the variables $s,t$, 
  whose coefficients are degree $4$ homogeneous polynomials $a_i$ in $32$ variables, the coordinates of $V_2$:
\[
   \det(sM_1 + t M_2) = a_0 s^4 + a_1 s^3t + a_2 s^2 t^2 + a_3 st^3 + a_4 t^4 
\]
(so that in particuular, $a_0 = \det M_1$,  $a_4 = \det M_2$).
Consider the discriminant of this polynomial, 
\begin{align*}
   \Discr&=
    256a_0^3a_4^3 - 192a_0^2a_1a_3a_4^2 - 128a_0^2a_2^2a_4^2 + 144a_0^2a_2a_3^2a_4 - 27a_0^2a_3^4\\
&\qquad + 144a_0a_1^2a_2a_4^2 - 6a_0a_1^2a_3^2a_4 - 80a_0a_1a_2^2a_3a_4 + 18a_0a_1a_2a_3^3\\
&\qquad + 16a_0a_2^4a_4 - 4a_0a_2^3a_3^2 - 27a_1^4a_4^2 + 18a_1^3a_2a_3a_4
    - 4a_1^3a_3^3\\
&\qquad - 4a_1^2a_2^3a_4 + a_1^2a_2^2a_3^2,
\end{align*}
which is a degree $24$ polynomial in the $32$ variables.
\begin{corollary}\label{cor_discriminant_divisor}
   A class of a tensor $T= sM_1 + t M_2$ is in the support of the effective divisor $(\Discr)$ 
      if and only if 
      $\det (sM_1 + t M_2)$ has a root of multiplicity at least two or is identically zero.
    $(\Discr)$ in $\PP^{31} = \PP (V_2)$ is $G$-invariant.
   Set theoretically, the support of $(\Discr)$ is equal to the closure of the orbit 
    $G\cdot [T_5]=
    G\cdot\begin{bsmallmatrix}
        s  & t & 0 & 0 \\
        0 & s & 0 & 0 \\
        0 & 0 & s + t & 0 \\
        0 & 0 & 0 & s - t \\
    \end{bsmallmatrix}$.
    Every class of a non-concise tensor lies in the support of $(\Discr)$,
    as does every class of a tensor of rank $5$ or $6$.
\end{corollary}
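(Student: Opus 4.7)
The plan is to establish the four assertions of the corollary in order. First, $\Discr$ is manifestly the classical discriminant of the binary quartic $\det(sM_1+tM_2) = a_0 s^4 + a_1 s^3 t + a_2 s^2 t^2 + a_3 s t^3 + a_4 t^4$ viewed as a form in $(s,t)$, so by the classical theory of the discriminant it vanishes precisely when this form has a root of multiplicity at least two or vanishes identically. For $G$-invariance, multiplication by $(g_2, g_3) \in \GL_4 \times \GL_4$ rescales $\det(sM_1+tM_2)$ by the nonzero constant $\det(g_2)\det(g_3)$, while the $\GL_2$-action performs an invertible linear substitution in $(s,t)$; since the discriminant of a binary form is a relative invariant under each of these operations, the vanishing locus $(\Discr) \subset \PP V_2$ is preserved setwise.

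Next, $(\Discr)$ is a nonzero effective divisor because $\Discr$ does not vanish on $T_4(0,1,2,3)$, whose determinant is a quartic with four distinct roots. Hence $\Supp(\Discr)$ is a pure codimension-one, $G$-invariant closed subset of $\PP^{31}$. By Lemma~\ref{lem_orbits_in_V_2} the only codimension-one $G$-orbits in $\PP V_2$ are those of $T_4$ (with pairwise distinct eigenvalues, hence nonzero discriminant) and $T_5$ (whose determinant contains the square factor $(s+\lambda_1 t)^2$ coming from the $2\times 2$ Jordan block, hence zero discriminant). Therefore set-theoretically $\Supp(\Discr) = \overline{G\cdot [T_5]}$.

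For the remaining assertions I reduce to the Kronecker normal form of Theorem~\ref{thm_kronecker_normal_form_and_ranks}. A non-concise tensor lies, after changing bases, inside $\bbk^1 \otimes \bbk^4 \otimes \bbk^4$, $\bbk^2 \otimes \bbk^3 \otimes \bbk^4$, or $\bbk^2 \otimes \bbk^4 \otimes \bbk^3$; in each case the pencil $sM_1 + tM_2$ has a constant zero row or column, so $\det(sM_1 + tM_2) \equiv 0$. For a tensor of rank at least $5$, its Kronecker form $T = L_{\epsilon_1}\oplus \cdots \oplus L_{\epsilon_k} \oplus L_{\eta_1}^{\transpose} \oplus \cdots \oplus L_{\eta_l}^{\transpose} \oplus (s\Id_f + tF) \oplus Z_{p\times q}$ splits into two cases. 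If any of $k, l, p, q$ is positive, then $\det \equiv 0$: positivity of $p$ or $q$ produces a zero row or column, while an $L_{\epsilon_i}$ block contributes $\epsilon_i + 1$ columns spanning only an $\epsilon_i$-dimensional subspace at each point $(s,t)$, forcing the $4 \times 4$ pencil to have column rank at most $3$ everywhere, and symmetrically for $L_{\eta_j}^{\transpose}$. If instead $k=l=p=q=0$, then $T = s\Id_4 + tF$ with $\rank T = 4 + m(F)$, so $m(F) \geq 1$ and $F$ has a Jordan block of size $\geq 2$ with some eigenvalue $\lambda$, contributing a factor $(s+\lambda t)^2$ to $\det(T)$.

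The main obstacle I anticipate is the case analysis in the last paragraph---verifying that every rank-$\geq 5$ tensor really falls into one of the two cases and that the column/row-rank bound for $L_{\epsilon_i}$ and $L_{\eta_j}^{\transpose}$ blocks is as claimed---but once Theorem~\ref{thm_kronecker_normal_form_and_ranks} is in hand, this reduces to a direct block-by-block dimension count.
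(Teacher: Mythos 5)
Your overall strategy matches the paper's: the classical theory of the binary quartic discriminant gives the first characterization, $G$-invariance follows because the $G$-action changes $\det(sM_1+tM_2)$ only by a scalar and a linear substitution in $(s,t)$, the identification of the support with $\overline{G\cdot[T_5]}$ comes from the orbit classification in Lemma~\ref{lem_orbits_in_V_2}, and the non-concise and high-rank cases are handled via Kronecker normal form. Your block-rank argument for the $L_{\epsilon_i}$ and $L_{\eta_j}^\transpose$ blocks is actually a nice conceptual substitute for the paper's appeal to the explicit table of $14$ orbits in Table~\ref{table_concise_orbits_in_V_2}; it makes the reason for the discriminant vanishing visible rather than case-checked.

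There is, however, one slip in the non-concise case. You assert that a tensor lying in $\bbk^1 \otimes \bbk^4 \otimes \bbk^4$, $\bbk^2 \otimes \bbk^3 \otimes \bbk^4$, or $\bbk^2 \otimes \bbk^4 \otimes \bbk^3$ always has a constant zero row or column in the pencil, hence $\det \equiv 0$. That is correct for the latter two cases (the pencil genuinely has a zero row, respectively column). But for $\bbk^1 \otimes \bbk^4 \otimes \bbk^4$ it is false: there $M_1$ and $M_2$ are linearly dependent, so after a change of basis in $\bbk^2$ one may take $M_2 = 0$, and the pencil is $sM_1$ with $M_1$ possibly invertible. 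Then $\det(sM_1) = s^4\det(M_1)$ is not identically zero; it is a nonzero binary quartic with a quadruple root at $[0:1]$, and the discriminant vanishes for that reason instead. The conclusion you want (all non-concise tensors lie in $\Supp(\Discr)$) is still true, but the justification must separate this case, exactly as the paper does (``...or else the matrices $M_1$ and $M_2$ are linearly dependent, so the determinant has a single root of multiplicity $4$'').
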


\begin{proof}
   The first characterization of the divisor is clear from the definition and properties of the discriminant, 
      while the $G$-invariance follows from this first characterization.
   Similarly, $T_5$ is in the support of $(\Discr)$, and so is the closure of its orbit.
   By Lemma~\ref{lem_orbits_in_V_2} the orbit of $T_5$ is the only orbit of codimension $1$ which is contained in the support.
   Since there are only finitely many orbits of codimension at least $2$, 
     the only irreducible $G$-invariant divisors are the closures of $30$-dimensional orbits.
   Hence the support of $(\Discr)$ is irreducible and equal to $\overline{G\cdot [T_5]}$.
   
If $T = (sM_1+tM_2)$ is non-concise, then either the normal form for $T$ involves a block of zeros,
so $\det(sM_1 + tM_2) = 0$,
or else the matrices $M_1$ and $M_2$ are linearly dependent, so the determinant has a single root of multiplicity $4$.
Hence all classes of non-concise tensors lie in the support of $(\Discr)$.
Table~\ref{table_concise_orbits_in_V_2} lists the tensors of rank $5$ or $6$ other than $T_5$.
The determinant of each tensor listed in the table is zero, or has a multiple root.
So the classes of these tensors also lie in $(\Discr)$.
\end{proof}

\subsection{High rank loci of \texorpdfstring{$2 \times 4 \times 4$}{2x4x4} tensors}\label{sect: 2x4x4 tensors}

In this case the generic rank is $g=4$ and the maximal rank is $m=6$.
\begin{proposition}\label{prop_2x4x4_tensors}
Let $V_2 = \bbk^2 \otimes \bbk^4 \otimes \bbk^4$
and $X = \Seg(\PP^1 \times \PP^3 \times \PP^3) \subset \PP V_2$.
Then
\[
    W_5=W_6+X \quad \text{and} \quad W_5+X = \PP V_2 \simeq \PP^{31}.
\]
   Moreover, $W_5$ is an irreducible divisor consisisting of those $sM_1 + tM_2$ such that $\det (s M_1+tM_2)$ 
   (considered as a homogeneous polynomial in two variables $s$ and $t$)
   is either identically $0$ or has a root of multiplicity at least $2$.
\end{proposition}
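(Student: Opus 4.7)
My plan is to proceed in three steps. First, to identify $W_5$ with the discriminant divisor: by Corollary~\ref{cor_discriminant_divisor} every tensor of rank $5$ or $6$ lies in the support of $(\Discr)$, so $W_5 \subseteq (\Discr)$. Conversely, $T_5$ as in Lemma~\ref{lem_orbits_in_V_2} has rank $5$ by Theorem~\ref{thm_kronecker_normal_form_and_ranks}, so $W_5 \supseteq \overline{G \cdot [T_5]}$; since that same corollary identifies this closure with $(\Discr)$, which is irreducible of dimension $30$, we deduce $W_5 = (\Discr)$. This simultaneously yields the determinantal description of $W_5$ and its irreducibility as a divisor.

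Next, for $W_5 + X = \PP V_2 \simeq \PP^{31}$, Corollary~\ref{dimension bound for chains of joins} forces either $\dim(W_5 + X) \ge \dim W_5 + 2 = 32$ (impossible) or $W_5 + X = \PP V_2$. For the identity $W_5 = W_6 + X$, the inclusion $W_6 + X \subseteq W_5$ is immediate from Theorem~\ref{thm_nesting_of_join_and_loci}. For the reverse, I note that $W_6 + X$ is closed, irreducible (as a join of irreducibles), and $G$-invariant, while by the first step $W_5$ is the closure of the single top-dimensional orbit $G \cdot [T_5]$, whose complement in $W_5$ is a finite union of orbit closures of dimension at most $29$ (by Lemma~\ref{lem_orbits_in_V_2} and Table~\ref{table_concise_orbits_in_V_2}). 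By irreducibility, either $W_6 + X$ is contained in one of those smaller orbit closures, or it meets $G \cdot [T_5]$, in which case $G$-invariance and closure immediately give $W_6 + X \supseteq W_5$. Hence it suffices to exhibit a single point of $(W_6 + X) \cap G \cdot [T_5]$.

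The main obstacle is this explicit construction, which I would handle by a direct computation in Kronecker normal form. Take the rank-$6$ tensor $T_6 = \begin{psmallmatrix} s & t & 0 & 0 \\ 0 & s & 0 & 0 \\ 0 & 0 & s & t \\ 0 & 0 & 0 & s \end{psmallmatrix} \in W_6$ and the simple tensor $x = t\,(e_3 + e_4)(e_3 - e_4)^{\transpose} \in X$. Then $T_6 + x$ is block-diagonal with upper block $\begin{psmallmatrix} s & t \\ 0 & s \end{psmallmatrix}$ (a Jordan block of size $2$ with eigenvalue $0$) and lower block $\begin{psmallmatrix} s + t & 0 \\ t & s - t \end{psmallmatrix}$, whose coefficient of $t$ has trace $0$ and determinant $-1$, hence distinct eigenvalues $\pm 1$ and is diagonalizable by a conjugation in $\GL_2$. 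A block-diagonal action in $\GL_4 \times \GL_4$ then brings $T_6 + x$ to the Kronecker normal form $T_5(0, 1, -1)$, placing $[T_6 + x]$ in $G \cdot [T_5]$ by Lemma~\ref{lem_orbits_in_V_2}. Since $[T_6 + x]$ lies on the secant line through $[T_6] \in W_6$ and $[x] \in X$, it also lies in $W_6 + X$, furnishing the required witness and completing the proof.
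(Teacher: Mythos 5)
Your proof is correct and follows the paper's overall strategy: identify $W_5$ with the discriminant divisor $\overline{G\cdot[T_5]}$ via Corollary~\ref{cor_discriminant_divisor}, deduce $W_5 + X = \PP V_2$ from Corollary~\ref{dimension bound for chains of joins}, and then show $W_6 + X = W_5$ by exhibiting a point of $W_6 + X$ in the dense orbit $G\cdot[T_5]$. The genuine difference is in the final step. The paper takes a \emph{general} simple tensor $T_1$, invokes a computer calculation to see that $\det(T_6 + T_1)$ has one double root at $s$ and two other distinct simple roots, and concludes $T_6 + T_1$ is of the form $T_5$; equality of the two irreducible $30$-dimensional varieties then follows. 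You instead note that $W_6 + X$ is a closed, irreducible, $G$-invariant subset of $W_5 = \overline{G\cdot[T_5]}$, so a \emph{single} explicit witness in $G\cdot[T_5]$ forces $W_6 + X \supseteq W_5$, and you produce one by hand: $T_6 + t\,(e_3+e_4)(e_3-e_4)^{\transpose}$ has lower $2\times 2$ block $sI_2 + tF$ with $F = \begin{psmallmatrix}1&0\\1&-1\end{psmallmatrix}$ traceless of determinant $-1$, hence diagonalizable to $\operatorname{diag}(1,-1)$, giving Kronecker normal form $T_5(0,1,-1)$ --- a calculation I have verified. Your version replaces the paper's computer-assisted step with a transparent hand computation and makes the transitivity-under-$G$ logic explicit, which is a small but real improvement in self-containment; the paper's phrasing (general element of $W_6+X$ is a general element of $W_5$) achieves the same end but requires unpacking the fact that $G$ acts transitively on $X$ and with a dense orbit on $W_6$, which it leaves implicit.
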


\begin{proof}
Let $T_5 = T_5(0,1,-1)$.
By Corollary~\ref{cor_discriminant_divisor}, $\overline{G \cdot [T_5]} = (\Discr)$
and every tensor of rank $5$ lies in the support of the divisor $(\Discr)$, so $W_5 \subseteq \overline{G \cdot [T_5]}$.
Conversely, the orbit $G \cdot [T_5] \subseteq W_5$.
Therefore, $W_5 = \overline{G\cdot [T_5]} = (\Discr)$ is an irreducible divisor.
Hence the equality $W_5 + X = W_4 = \PP V_2$ follows from Corollary~\ref{dimension bound for chains of joins}.

Let $T_6$ be the (unique up to a choice of coordinates) tensor of rank $6$, and let $T_1$ be a general tensor of rank $1$.
Then $T_6 +T_1$ has rank $5$ by Theorem~\ref{thm_nesting_of_join_and_loci}.
A computer calculation shows that determinant of $T_6+T_1$ is divisible by $s^2$ and has two other distinct roots not equal to $s$.
Thus $T_6 + T_1$ must be of the form $T_5$.
That is, a general element of the (irreducible) join $W_6 + X$ (where $X=\PP^1 \times \PP^3 \times \PP^3$) is a general element
  of the irreducible variety $W_5$, thus $W_6 + X= W_5$ as claimed.
\end{proof}
\begin{remark}
      The proofs above show that for $X=\PP^1\times \PP^3 \times \PP^3\subset \PP^{31}$  (so that $\dim X = 7$) 
         we have $\dim W_6 = 24$ and $\dim W_5 = \dim (W_6 + X) = 30$.
      That is, the join $W_6 + X$ is defective (it is expected to fill the ambient space excessively, but it does not).
\end{remark}

\section{Curves in quadric surfaces}\label{sect_curves}

We study rank with respect to a curve $C$ contained in a smooth quadric surface $Q \cong \PP^1 \times \PP^1$ in $\PP^3$.
By nondefectivity, the generic rank with respect to $C$ is $2$,
and by Theorem \ref{upper bound codim+1} or Theorem \ref{thm_dimension_bound_for_curve} the maximal rank is at most $3$.

If $C$ has bidegree $(2,2)$ then $C$ is an elliptic normal quartic curve.
Bernardi, Gimigliano, and Id\`a gave a description of $W_3$ in this case,
and more generally studied elliptic normal curves of degree $d+1$ in $\PP^d$, $d \geq 3$
\cite[Theorem 28]{MR2736357}.
We refine their result in the $d=3$ case
and show that the maximal rank locus $W_3$ is a curve of degree $8$ disjoint from $C$.

Piene has shown that if $C$ is a general curve of bidegree $(3,3)$ then $W_3$ is empty (the maximal rank is $2$),
see \cite[Theorem 2]{MR620126}.
We extend this to general curves of bidegree $(a,b)$, where $a \geq 4$ and $b \geq 1$.

\subsection{Elliptic quartic curve}
Let us study the locus $W_3$
with respect to an elliptic normal curve of degree $4$ in $\PP^3$.

\begin{proposition}
Let $C=Q_1\cap Q_2$ be a smooth complete intersection of two smooth quadrics in $\PP^3$.
The generic rank with respect to $C$ is $2$ and the maximal rank is $3$.
$W_3$ is a curve of degree $8$, disjoint from $C$ and
containing the vertices of the $4$ singular quadrics that contain $C$.
Every point of $W_3$ has rank $3$, except those $4$ points, which have rank $2$.
\end{proposition}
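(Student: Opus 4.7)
The plan is to analyze ranks using the pencil $|I_C(2)|$ of quadrics containing $C$ and to identify $W_3$ as a $1$-dimensional locus of degree $8$.

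First I would record the easy bounds. Since $C$ is a smooth nondegenerate curve of degree $4$ in $\PP^3$, it is nondefective, so $\sigma_2(C) = \PP^3$ and $g=2$; the bound $m \le \codim C + 1 = 3$ follows from Theorem~\ref{upper bound codim+1}, and we need to show that rank $3$ is attained and to describe its locus.

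Next, for each $p \in \PP^3 \setminus C$, evaluation at $p$ cuts the pencil to a single quadric $Q_p$. A proper chord of $C$ through $p$ passes through $p$ and through two distinct points of $C \subset Q_p$, meeting $Q_p$ in three distinct points, hence lying in $Q_p$ by B\'ezout. Therefore the chords of $C$ through $p$ correspond to rulings of $Q_p$ through $p$ whose intersection with $C$ is reduced. For smooth $Q_p \cong \PP^1 \times \PP^1$, exactly two rulings pass through $p$, each meeting the $(2,2)$-divisor $C$ in two points; so $\rank(p) = 3$ precisely when both rulings are tangent to $C$. For the $4$ cones $Q_{v_i}$ in the pencil, the vertex $v_i$ lies on infinitely many rulings, generically each a proper chord of $C$, so $\rank(v_i) = 2$.

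To describe $W_3$ and obtain the degree $8$, I would use the following structure: on each smooth $Q \cong \PP^1 \times \PP^1$ in the pencil, the two projections $C \to \PP^1$ are degree-$2$ maps from the elliptic curve $C$, each branched at $4$ points by Riemann-Hurwitz. This produces $4 + 4$ tangent rulings and $4 \times 4 = 16$ candidate rank-$3$ points per smooth fiber; as $Q$ varies over the pencil $\PP^1$ these candidates trace out a one-dimensional subvariety of $\PP^3$, and as $Q$ degenerates to a cone $Q_{v_i}$ the $16$ candidates collapse to the single vertex $v_i$, placing each vertex in $W_3$. The degree-$8$ statement should emerge from intersection-theoretic bookkeeping on the quadric bundle over the pencil, or equivalently through the tangent developable $\tau(C)$, which itself has degree $2d + 2g - 2 = 8$ for $(d,g) = (4,1)$ and naturally contains the candidate curve.

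The main obstacle is the precise degree computation and the verification that $W_3 \cap C = \emptyset$: a priori the $16$-point-per-fiber structure could give degree much larger than $8$, and one must carefully track contributions at the singular fibers to confirm $\deg W_3 = 8$; likewise, the disjointness from $C$ requires a limit analysis showing that the only accumulation of the rank-$3$ locus outside the open set where $Q$ is smooth occurs at the $4$ vertices, and not on $C$ itself.
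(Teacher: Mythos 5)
Your replacement of the paper's projection argument with the ``pencil plus B\'ezout'' observation is a nice alternative: the paper projects from $p$ to $\PP^2$ and counts the two singularities of the image quartic, whereas you locate \emph{all} chords through $p$ inside the unique quadric $Q_p$ of the pencil passing through $p$, where they must be rulings; both give exactly two secant-or-tangent lines through a generic $p$ and the criterion $\rank(p)=3$ iff both rulings are tangent. That part is sound, and so is the Riemann--Hurwitz count of $4$ tangent rulings per family, hence $16$ candidate points per smooth fiber.

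However, you leave the two points you flag as obstacles genuinely open, and neither is hard once you use what you already have. For the degree: you do not need an auxiliary intersection-theoretic bookkeeping nor the tangent developable (the coincidence $\deg\tau(C)=8$ does not transfer to a curve inside that surface). Instead, argue as the paper does: for any smooth $Q$ in the pencil, a point $w\in W_3^{\circ}\cap Q$ lies on two tangent lines of $C$, and each of these tangent lines has three points on $Q$ (the point of tangency counted twice, plus $w$), hence by B\'ezout lies in $Q$ and is one of the $8$ tangent rulings; conversely the $16$ intersections of the $4+4$ tangent rulings lie in $W_3^{\circ}$. So $W_3\cap Q$ is \emph{exactly} those $16$ points, and since $\deg Q=2$, $\deg W_3=8$. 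For disjointness from $C$: by semicontinuity (projecting from the abstract tangent variety $\{(x,\ell):x\in\ell,\ \ell\text{ tangent to }C\}$ to $\PP^3$), every point of $\overline{W_3^{\circ}}$ lies on at least two tangent lines of $C$. But no point of $C$ lies on two tangent lines, since a bitangent or flex line would have to be contained in every quadric through $C$ (it meets each quadric in $\geq 3$ points), hence in $C$ itself, which is absurd. This closes the gap and simultaneously shows the only limits of the $16$-point fibers at the singular quadrics are the $4$ vertices.
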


\begin{proof}
Note that $C$ has no trisecant, bitangent, or flex lines, since any such line would have to be contained in
every quadric surface that contains $C$.
The quadrics containing $C$ form a pencil with $4$ singular members, which are distinct and irreducible.
Let the vertices of those cones be $V = \{x_1,\dotsc,x_4\}$.
Each vertex $x_i$ lies off of $C$, and each $x_i$ has rank $2$.

Let $x\in\PP^3\setminus (C \cup V)$ and let $\pi:\PP^3\dasharrow\PP^2$ the projection from $x$.
Then $\pi(C)$ is an elliptic quartic curve, hence has $2$ singularities, counting with multiplicity.
This shows that through each point of $\PP^3 \setminus (C \cup V)$ there are $2$ secant or tangent lines to $C$.
A priori this is counting with multiplicity, but since $C$ has no trisecant or bitangent lines,
the two secant or tangent lines are distinct.
The point $x$ has rank $3$ if and only if no proper secant to $C$ passes through $x$.
Thus the points of rank $3$
are exactly those in the intersection of two tangent lines to $C$, other than the points in $V$
(this is one of the results in \cite[Theorem 28]{MR2736357}).

Let $W_3^{\circ}$ denote the set of points of rank $3$.
Every point in $W_3 = \overline{W_3^{\circ}}$ lies on at least two tangent lines of $C$,
by semicontinuity of the degree of the projection map from the abstract tangent variety
$\{(x,\ell) \mid x \in \ell, \text{ $\ell$ tangent to $C$}\}$.
But no point of $C$ lies on more than one tangent line.
This shows that $W_3$ is disjoint from $C$.

Let $Q$ be a smooth quadric containing $C$ and
let $\pr_i:Q\to\PP^1$, $i=1,2$, be the two natural projections.
Then $\pr_{i|C}:C\to\PP^1$ is a $2:1$ morphism with $4$ ramification points.
This shows that there are $4$ tangent lines to $C$ in each ruling.
The tangent lines to $C$ in the rulings of $Q$ intersect in $16$ points which do not lie on $C$,
as no tangent line intersects $C$ anywhere other than its point of tangency.
Therefore the $16$ points of intersection are in $W_3^{\circ}\cap Q$.
On the other hand, if $w\in W_3^{\circ}\cap Q$ then the tangent lines passing through $w$ are contained in $Q$.
Hence  any such quadric intersects $W_3^{\circ}$ in exactly 16 points.
This shows that the closure $\overline{W_3^{\circ}}$ is a curve of degree $8$.

Finally, let $Q$ be a singular quadric containing $C$.
Then it is immediate to realize that the vertex of $Q$ is the only point of $Q$
that lies on more than one tangent line of $C$.
So $W_3$ contains each vertex $x_1,\dotsc,x_4$ of a singular quadric through $C$.
These are the only points of rank $2$ in $W_3$.
\end{proof}

\begin{remark}
The above proof also recovers the (previously known) fact that
general points in $\PP^3$, namely those outside of the tangential variety of $C$,
admit precisely $2$ decompositions as linear combinations of $2$ points in $C$.
This holds more generally for elliptic normal curves of even degree, see \cite[Proposition 5.2]{MR2225496}.
\end{remark}

\begin{remark}
   The example of the elliptic quartic curve shows that $W_m$ can be disjoint from the base variety.
   Thus the situation as in the proof of Theorem~\ref{thm_dimension_bound_for_homogeneous}, when $W_m \supset X$, 
     is rather special to the homogeneous spaces.
\end{remark}

\subsection{General curves in a quadric surface}
\begin{proposition}
Let $a \geq b \geq 1$ and let $C \subset Q$ be a general curve of type $(a,b)$ in the smooth quadric surface $Q$.
If $a \geq 4$ then $W_3$ is empty, that is, the maximal rank $m$ is equal to $2$.
\end{proposition}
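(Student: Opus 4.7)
The plan is to show that under the hypotheses $a\ge 4$ and $b\ge 1$, every $p\in\PP^3\setminus C$ lies on a proper chord of $C$, whence $\rank_C(p)\le 2$; together with the generic rank $g=2$ this forces $m=2$ and $W_3=\emptyset$. First, for $p\in Q\setminus C$, through $p$ pass two lines of $Q$, one from each ruling: one meets $C$ in $a$ points, the other in $b$ points. Since $a\ge 4\ge 2$, for a general $C$ the $a$-ruling line through $p$ meets $C$ in at least two distinct points, producing a chord of $C$ through $p$.

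For $p\notin Q$, consider the double cover $\pi_p\colon Q\to\PP^2$ with Galois involution $\sigma=\sigma_p$ and ramification the smooth conic $R=Q\cap H_p$, where $H_p$ is the polar plane of $p$ with respect to $Q$. Put $C':=\sigma(C)$. If $\sigma(C)=C$, then $\sigma|_C$ is a nontrivial involution of $C$ (it cannot fix $C$ pointwise since that would force $C\subset R$, impossible by comparing bidegrees), so a general $q\in C$ satisfies $\sigma(q)\ne q\in C$ and the line $\overline{q\,\sigma(q)}$ is a chord through $p$. Otherwise $\sigma(C)\ne C$ and $\pi_p|_C\colon C\to\pi_p(C)\subset\PP^2$ is birational; a node of $\pi_p(C)$ would arise from two distinct points of $C$ collinear with $p$, so if $p$ had rank $\ge 3$ then $\pi_p|_C$ must be injective and $\pi_p(C)$ has only unibranch (cusp-type) singularities.

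The plane curve $\pi_p(C)$ has degree $a+b$, geometric genus $(a-1)(b-1)$ and arithmetic genus $\binom{a+b-1}{2}$, so the total $\delta$-invariant of its singularities equals
\[
D:=\binom{a+b-1}{2}-(a-1)(b-1)=\binom{a}{2}+\binom{b}{2}.
\]
Each unibranch singularity at $\pi_p(x)$ comes from a tangent line $T_xC$ through $p$, an ordinary cusp ($\delta=1$) arising from a simple incidence and each additional unit of $\delta$ corresponding to an additional order of contact of $C$ with the line $\overline{x\,p}$ at $x$. If the cusps occur at $\pi_p(x_1),\dots,\pi_p(x_\tau)$ with deltas $\delta_1,\dots,\delta_\tau$ summing to $D$, then for a general $C$ the condition that $p$ lies on $\tau$ concurrent tangent lines $T_{x_i}C$ cuts out in $\PP^3$ a locus $Y_\tau$ of codimension at least $\tau$ (by a standard incidence count: concurrent $k$-tuples in $C^{(k)}$ have codimension $2k-3$, so $\dim Y_k\le 3-k$), and each higher-order tangency at $x_i$ imposes an extra codimension $\delta_i-1$ on $p$ along $T_{x_i}C$. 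Summing yields a total codimension at least $\tau+\sum_i(\delta_i-1)=D$ in $\PP^3$. For $a\ge 4,\,b\ge 1$ we have $D\ge\binom{4}{2}=6>3=\dim\PP^3$, so no such $p$ exists and $W_3=\emptyset$.

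The main technical obstacle is a rigorous verification of this codimension bound: establishing that for a general $C$ of bidegree $(a,b)$ the $k$-fold concurrent-tangent loci have the expected dimensions $3-k$, and that the higher-order tangency conditions at different points of $C$ impose independent codimensions on $p$; a careful P\"ucker-style computation of the tangent developable and its multisecant structure, in the spirit of Piene's treatment of the $(3,3)$ case, should suffice.
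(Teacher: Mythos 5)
For $p\in Q\setminus C$ your argument matches the paper's exactly (the $a$-ruling line through $p$ meets $C$ in at least two distinct points by generality), and for $p\notin Q$ your setup via the double cover and the computation of the total $\delta$-budget $D=\binom{a}{2}+\binom{b}{2}$ are both correct and consistent with the paper's cusp count. But the final step is a genuine gap, and the one you yourself flag: the claim that forcing this cuspidal singularity pattern pushes $p$ into a locus of codimension at least $D$ in $\PP^3$ is the entire content of what has to be proved, and it is not established. In particular, the assertion that each extra unit of $\delta_i$ at a cusp imposes one additional independent condition \emph{on $p$} (rather than on $C$, or on the pair) is far from obvious; it would require a careful analysis of the tangent developable and its multiple-point loci for a general $(a,b)$-curve, and you have not shown the concurrent-tangent incidence varieties $Y_k$ have the expected dimension for such $C$, nor that higher-order contact and concurrency impose independent conditions.

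The paper avoids any such parameter count with a single pointwise observation that you are missing. A cusp of $\pi_p(C)$ arises from a point $y\in C$ whose tangent line $T_yC$ passes through $p$. Since $T_yC\subset T_yQ$ and $C\subset Q$, the condition $p\in T_yC$ implies $p\in T_yQ$, which is exactly the condition that $y$ lies on the polar plane $H_p$ of $Q$ with respect to $p$. Hence all cusp points of $\pi_p(C)$ lie in the divisor $Z_p=H_p\cap C$ of degree $a+b$ on $C$, so there are at most $a+b$ of them. On the other hand, assuming (as the paper does, for a general $C$) that the cusps are ordinary, the arithmetic-minus-geometric genus count forces exactly $\binom{a}{2}+\binom{b}{2}$ cusps, and $\binom{a}{2}+\binom{b}{2}>a+b$ once $a\ge 4$ and $b\ge 1$. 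This is a direct numerical contradiction for \emph{every} admissible $p$, needing no dimension count in $\PP^3$. Replacing your final codimension estimate by this polar-plane bound would close the gap.
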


\begin{proof}
First let $x \in Q$, $x \notin C$.
Let $l$ be the line in $Q$ through $x$ such that $l \cdot C = a$.
By generality $l \cap C$ has points of multiplicity at most $2$,
so $a \geq 3$ is enough to imply that $l \cdot C$ is supported in at least two distinct points.
Hence $\rank(x) = 2$.

Next let $x \notin Q$,
and suppose $\rank(x) = 3$.
Let $\pi : \PP^3 \dasharrow \PP^2$ be the projection from $x$.
Since $x$ lies on no secant line to $C$, and not every tangent line to $C$ passes through $x$,
$\pi$ has degree $1$ on $C$.
Then $\pi(C)$ is a plane curve of degree $a+b$, and every point of $\pi(C)$ has multiplicity at most $2$,
since each line through $x$ intersects $Q$ with multiplicity $2$.
The projection $\pi(C)$ has no nodes, only cuspidal singularities.
Let $H_x$ be the polar hyperplane of $Q$ in $x$,
so $y \in H_x \cap Q$ if and only if the tangent plane to $Q$ at $y$ contains $x$,
see for example \cite[pg.~238]{harris:intro}, \cite[\S1.1.2]{MR2964027}.
Let $Z_x = H_x \cap C$.
Then $Z_x$ has degree $a+b$ and the cuspidal points of $\pi(C)$ are contained in $\pi(Z_x)$.
Therefore the curve $\pi(C)$ has at most $a+b$ cusps.
By adjunction in $Q$, $C$ has genus $1 + (1/2)a(b-2) + (1/2)b(a-2) = (a-1)(b-1)$.
The projection $\pi(C)$ has degree $a+b$, geometric genus $(a-1)(b-1)$, and only ordinary cusps,
hence the number of cusps is $(1/2)(a+b-1)(a+b-2)-(a-1)(b-1) = \binom{a}{2}+\binom{b}{2}$.
This is strictly greater than $a+b$ as soon as $a \geq 4$ and $b \geq 1$.
Thus once again $\rank(x) = 2$.
\end{proof}

\subsection*{Acknowledgements}

We are grateful for the hospitality and partial support of the Fields Institute in Toronto (Canada)
during the Thematic Program on Combinatorial Algebraic Geometry in Fall 2016.
We thank the program organizers and the Institute's staff for invitations, financial support,
and the wonderfully stimulative atmosphere for collaborative work.
We are also grateful to the participants of the semester for their suggestions and discussions.

J.~Buczy\'nski is supported by a grant of Polish National Science Center (NCN), project 2013/11/D/ST1/02580,
        and by a scholarship of Polish Ministry of Science.
K.~Han is supported by the POSCO Science Fellowship of POSCO TJ Park Foundation
and the DGIST Start-up Fund of the Ministry of Science, ICT and Future Planning (No.\ 2016010066).
M.~Mella is partially supported by Progetto MIUR ``Geometry of Algebraic Varieties'' 2015EYPTSB\_005.
Z.~Teitler is supported by a grant from the Simons Foundation (\#354574).

The computer algebra program Magma \cite{magma} was helpful in calculation of explicit examples.
The article is a part of the activities of the AGATES research group.

\renewcommand{\MR}[1]{}
%\bibliography{biblio}

\begin{thebibliography}{CGO14}

\bibitem[{\AA}dl87]{MR947474}
Bj{\o}rn {\AA}dlandsvik, \emph{Joins and higher secant varieties}, Math. Scand.
  \textbf{61} (1987), no.~2, 213--222. \MR{947474 (89j:14030)}

\bibitem[AH95]{MR1311347}
J.~Alexander and A.~Hirschowitz, \emph{Polynomial interpolation in several
  variables}, J. Algebraic Geom. \textbf{4} (1995), no.~2, 201--222.
  \MR{1311347 (96f:14065)}

\bibitem[Bal10]{MR2771116}
Edoardo Ballico, \emph{On the real {$X$}-ranks of points of
  {$\mathbb{P}^n(\mathbb{R})$} with respect to a real variety
  {$X\subset\mathbb{P}^n$}}, Ann. Univ. Mariae Curie-Sk\l odowska Sect. A
  \textbf{64} (2010), no.~2, 15--19. \MR{2771116}

\bibitem[Bal11]{MR2783180}
E.~Ballico, \emph{An upper bound for the {$X$}-ranks of points of
  {$\mathbb{P}^n$} in positive characteristic}, Albanian J. Math. \textbf{5}
  (2011), no.~1, 3--10. \MR{2783180 (2012c:14108)}

\bibitem[BCP97]{magma}
Wieb Bosma, John Cannon, and Catherine Playoust, \emph{The {M}agma algebra
  system. {I}. {T}he user language}, J. Symbolic Comput. \textbf{24} (1997),
  no.~3-4, 235--265, Computational algebra and number theory (London, 1993).
  \MR{1484478}

\bibitem[BGI11]{MR2736357}
Alessandra Bernardi, Alessandro Gimigliano, and Monica Id{\`a}, \emph{Computing
  symmetric rank for symmetric tensors}, J. Symbolic Comput. \textbf{46}
  (2011), no.~1, 34--53. \MR{2736357 (2012h:14126)}

\bibitem[BL13]{MR2996361}
Jaros{\l}aw Buczy{\'n}ski and J.~M. Landsberg, \emph{Ranks of tensors and a
  generalization of secant varieties}, Linear Algebra Appl. \textbf{438}
  (2013), no.~2, 668--689. \MR{2996361}

\bibitem[BS16]{MR3506500}
Grigoriy Blekherman and Rainer Sinn, \emph{Real rank with respect to
  varieties}, Linear Algebra Appl. \textbf{505} (2016), 344--360. \MR{3506500}

\bibitem[BT15]{MR3368091}
Grigoriy Blekherman and Zach Teitler, \emph{On maximum, typical and generic
  ranks}, Math. Ann. \textbf{362} (2015), no.~3-4, 1021--1031. \MR{3368091}

\bibitem[BT16]{MR3536055}
Jaros{\l}aw Buczy{\'n}ski and Zach Teitler, \emph{Some examples of forms of
  high rank}, Collect. Math. \textbf{67} (2016), no.~3, 431--441. \MR{3536055}

\bibitem[Car06]{MR2279854}
Enrico Carlini, \emph{Reducing the number of variables of a polynomial},
  Algebraic geometry and geometric modeling, Math. Vis., Springer, Berlin,
  2006, pp.~237--247. \MR{2279854 (2007i:12010)}

\bibitem[CC06]{MR2225496}
Luca Chiantini and Ciro Ciliberto, \emph{On the concept of {$k$}-secant order
  of a variety}, J. London Math. Soc. (2) \textbf{73} (2006), no.~2, 436--454.
  \MR{2225496}

\bibitem[CCC15]{MR3320211}
Enrico Carlini, Maria~Virginia Catalisano, and Luca Chiantini, \emph{Progress
  on the symmetric {S}trassen conjecture}, J. Pure Appl. Algebra \textbf{219}
  (2015), no.~8, 3149--3157. \MR{3320211}

\bibitem[CGO14]{MR3213518}
Enrico Carlini, Nathan Grieve, and Luke Oeding, \emph{Four lectures on secant
  varieties}, Connections between algebra, combinatorics, and geometry,
  Springer Proc. Math. Stat., vol.~76, Springer, New York, 2014, pp.~101--146.
  \MR{3213518}

\bibitem[CS11]{MR2754189}
Gonzalo Comas and Malena Seiguer, \emph{On the rank of a binary form}, Found.
  Comput. Math. \textbf{11} (2011), no.~1, 65--78. \MR{2754189}

\bibitem[Dol12]{MR2964027}
Igor~V. Dolgachev, \emph{Classical algebraic geometry}, Cambridge University
  Press, Cambridge, 2012, A modern view. \MR{2964027}

\bibitem[FH91]{MR1153249}
William Fulton and Joe Harris, \emph{Representation theory}, Graduate Texts in
  Mathematics, vol. 129, Springer-Verlag, New York, 1991, A first course,
  Readings in Mathematics. \MR{1153249}

\bibitem[FOV99]{MR1724388}
H.~Flenner, L.~O'Carroll, and W.~Vogel, \emph{Joins and intersections},
  Springer Monographs in Mathematics, Springer-Verlag, Berlin, 1999.
  \MR{1724388}

\bibitem[Ger96]{Geramita}
Anthony~V. Geramita, \emph{Inverse systems of fat points: {W}aring's problem,
  secant varieties of {V}eronese varieties and parameter spaces for
  {G}orenstein ideals}, The {C}urves {S}eminar at {Q}ueen's, {V}ol.\ {X}
  ({K}ingston, {ON}, 1995), Queen's Papers in Pure and Appl. Math., vol. 102,
  Queen's Univ., Kingston, ON, 1996, pp.~2--114.

\bibitem[Har95]{harris:intro}
Joe Harris, \emph{Algebraic geometry}, Graduate Texts in Mathematics, vol. 133,
  Springer-Verlag, New York, 1995, A first course, Corrected reprint of the
  1992 original. \MR{1416564 (97e:14001)}

\bibitem[IK99]{MR1735271}
Anthony Iarrobino and Vassil Kanev, \emph{Power sums, {G}orenstein algebras,
  and determinantal loci}, Lecture Notes in Mathematics, vol. 1721,
  Springer-Verlag, Berlin, 1999, Appendix C by Iarrobino and Steven L. Kleiman.
  \MR{1735271 (2001d:14056)}

\bibitem[Lan12]{MR2865915}
J.~M. Landsberg, \emph{Tensors: {G}eometry and {A}pplications}, Graduate
  Studies in Mathematics, vol. 128, American Mathematical Society, Providence,
  RI, 2012. \MR{2865915}

\bibitem[LT10]{MR2628829}
J.~M. Landsberg and Zach Teitler, \emph{On the ranks and border ranks of
  symmetric tensors}, Found. Comput. Math. \textbf{10} (2010), no.~3, 339--366.
  \MR{2628829 (2011d:14095)}

\bibitem[OS78]{MR0476735}
Peter Orlik and Louis Solomon, \emph{Singularities. {II}. {A}utomorphisms of
  forms}, Math. Ann. \textbf{231} (1977/78), no.~3, 229--240. \MR{0476735}

\bibitem[Pie81]{MR620126}
Ragni Piene, \emph{Cuspidal projections of space curves}, Math. Ann.
  \textbf{256} (1981), no.~1, 95--119. \MR{620126}

\bibitem[Pro07]{MR2265844}
Claudio Procesi, \emph{Lie groups}, Universitext, Springer, New York, 2007, An
  approach through invariants and representations. \MR{2265844}

\bibitem[Rez92]{MR1096187}
Bruce Reznick, \emph{Sums of even powers of real linear forms}, Mem. Amer.
  Math. Soc. \textbf{96} (1992), no.~463, viii+155. \MR{1096187 (93h:11043)}

\bibitem[Rez95]{MR1347159}
\bysame, \emph{Uniform denominators in {H}ilbert's seventeenth problem}, Math.
  Z. \textbf{220} (1995), no.~1, 75--97. \MR{1347159}

\end{thebibliography}

%%%%%%%%%%%%%%%%%%%%%%%%%%%%%%%%%% BIBLIOGRAPHY %%%%%%%%%%%%%%%%%%%%%%%%%%%%%%%%%%%

\providecommand{\bysame}{\leavevmode\hbox to3em{\hrulefill}\thinspace}
\providecommand{\MR}{\relax\ifhmode\unskip\space\fi MR }
% \MRhref is called by the amsart/book/proc definition of \MR.
\providecommand{\MRhref}[2]{%
  \href{http://www.ams.org/mathscinet-getitem?mr=#1}{#2}
}
\providecommand{\href}[2]{#2}

\end{document}